\newtheorem{theorem}{\sc Theorem}[section]
\newtheorem{proposition}[theorem]{\sc Proposition}
\newtheorem{lemma}[theorem]{\sc Lemma}
\newtheorem{corollary}[theorem]{\sc Corollary}
\theoremstyle{definition}
\newtheorem{definition}[theorem]{\sc Definition}
\newtheorem{example}[theorem]{\sc Example}
\theoremstyle{remark}
\newtheorem{remark}[theorem]{\sc Remark}
\newcommand{\tensor}[1]{\otimes_{\scriptscriptstyle{#1}}}
\newcommand{\Sf}[1]{\mathsf{#1}}
\renewcommand{\hom}[3]{\mathrm{Hom}_{\Sscript{#1}}\left(#2,\,#3\right)}
\newcommand{\td}[1]{\widetilde{#1}}
\newcommand{\bd}[1]{\boldsymbol{#1}}
\newcommand{\bara}[1]{\overline{#1}}
\newcommand{\LR}[1]{\left\{\underset{}{} #1 \right\}}
\newcommand{\Go}{\cG_{\Sscript{0}}}
\newcommand{\Ga}{\cG_{\Sscript{1}}}
\newcommand{\Ho}{\cH_{\Sscript{0}}}
\newcommand{\Ha}{\cH_{\Sscript{1}}}
\newcommand{\Ko}{\cK_{\Sscript{0}}}
\newcommand{\Ka}{\cK_{\Sscript{1}}}
\newcommand{\Mo}{\cM_{\Sscript{0}}}
\newcommand{\Lo}{\cL_{\Sscript{0}}}
\newcommand{\La}{\cL_{\Sscript{1}}}
\newcommand{\Ad}[1]{{\bd{ad}}_{\Sscript{g}}}
\newcommand{\Gop}{\cG^{\Sscript{op}}}
\newcommand{\Hop}{\cH^{\Sscript{op}}}
\newcommand{\gG}{\mathscr{G}}
\newcommand{\oO}{\mathscr{O}}
\newcommand{\xX}{\mathscr{X}}
\newcommand{\cG}{{\mathcal G}}
\newcommand{\cH}{{\mathcal H}}
\newcommand{\cK}{{\mathcal K}}
\newcommand{\cL}{{\mathcal L}}
\newcommand{\cM}{{\mathcal M}}
\newcommand{\cR}{{\mathcal R}}
\newcommand{\cU}{{\mathcal U}}
\newcommand{\cV}{{\mathcal V}}
\newcommand{\Gsets}{\cG\text{-}{\rm Sets}}
\newcommand{\Sscript}[1]{\scriptscriptstyle{#1}}
\newcommand{\due}[3]{{}_{{#2 }} {#1}_{{ #3}}\,}
\newcommand{\phio}{\phiup_{\Sscript{0}}}
\newcommand{\Stab}[2]{\Stabi_{ \scriptscriptstyle{\mathcal{#2}}}\left({#1}\right)}
\newcommand{\rhaction}{\text{\scriptsize{\(\leftharpoonup\)}}}
\newcommand{\lgaction}{\text{\scriptsize{\(\rightharpoonup\)}}}
\newcommand{\rgaction}{\text{\scriptsize{\(\leftharpoondown\)}}}
\newcommand{\lhaction}{\text{\scriptsize{\(\rightharpoondown\)}}}
\newcommand{\cHop}{\cH^{\Sscript{op}}}
\newcommand{\cGop}{\cG^{\Sscript{op}}}
\newcommand{\Hlcoset}[1]{\cH[#1]}
\newcommand{\Glcoset}[1]{\cG[#1]}
\newcommand{\Hrcoset}[1]{[#1]\cH}
\newcommand{\Grcoset}[1]{[#1]\cG}
\DeclareMathOperator{\Id}{Id}
\DeclareMathOperator{\pr}{pr}
\DeclareMathOperator{\Stabi}{Stab}
\DeclareMathOperator{\Orb}{Orb}
\DeclareMathOperator{\rep}{rep}
\begin{document}
\allowdisplaybreaks

\title[Mackey formula for bisets over groupoids]{Mackey formula for bisets over groupoids}

\author{Laiachi El Kaoutit}
\address{Universidad de Granada, Departamento de \'{A}lgebra and IEMath-Granada. Facultad de Educaci\'{o}n, Econon\'ia y Tecnolog\'ia de Ceuta. Cortadura del Valle, s/n. E-51001 Ceuta, Spain}
\email{kaoutit@ugr.es}
\urladdr{http://www.ugr.es/~kaoutit/}

\author{Leonardo Spinosa}
\address{University of Ferrara, Department of Mathematics and Computer Science\newline
Via Machiavelli 30, Ferrara, I-44121, Italy}
\email{leonardo.spinosa@unife.it}
\urladdr{https://orcid.org/0000-0003-3220-6479}

\date{\today}
\subjclass[2010]{Primary  18B40, 20L05 , 22F05; Secondary 20C15, 13A50}

\begin{abstract}
In this paper we establish the Mackey formula for groupoids, extending the well known formula in abstract groups context.
This formula involves the notion of groupoid-biset, its orbit set and the tensor product over groupoids, as well as cosets by subgroupoids.  
\end{abstract}

\keywords{Groupoid-bisets; Orbit sets; Tensor product over groupoids; Mackey Formula.}
\thanks{The work of Leonardo Spinosa was partially supported by   the  "National Group for Algebraic and Geometric Structures, and their Applications" (GNSAGA– INdAM).
Research supported by the Spanish Ministerio de Econom\'{\i}a y Competitividad  and the European Union FEDER, grant MTM2016-77033-P}
\maketitle
\vspace{-0.8cm}
\begin{small}
\tableofcontents
\end{small}

\pagestyle{headings}

\vspace{-1.2cm}

\section{Introduction}

We will describe the motivations behind our work and how the Mackey formula for groupoids fits into the contemporary mathematical framework.
Thereafter, we will briefly describe our main result.

\subsection{Motivations and overview}\label{ssec:1}   
The classical Mackey formula, which deals with linear representations of finite groups, appeared for the first time in \cite[Theorem 1]{MackeyIndRepGr}.  Roughly speaking, this formula involves simultaneously the restriction and the induction functors (with respect to two different subgroups) and gives a decomposition, as direct sum, of their composition, although in a non canonical way.   As was explained in \cite[Section 7.4]{SerreRepFinGr}, the Mackey formula is a key tool in proving  Mackey irreducibility criterion, which gives necessary and sufficient conditions for the irreducibility of an induced representation, and proves to be useful to study linear representations of a semidirect product by an abelian group, see  \cite[Proposition~25]{SerreRepFinGr}. Another  formulation of the classical Mackey formula, using modules over groups algebras, was stated in \cite[Theorem 44.2]{CurtReiRepThFinGrAsAlg}. Successively, in \cite[Theorems 7.1 and 12.1]{MackeyIndRepLocCoGr},  the Mackey formula was extended to the context of locally compact groups (with opportune hypotheses), and used to prove a generalization of the Frobenius Reciprocity Theorem, see  \cite[Theorems 8.1, 8.2 and 13.1]{MackeyIndRepLocCoGr}. Later on, 
many variants and different formulations of the Mackey formula have been investigated. For example, in \cite{TaylorMackeyConGr}, \cite{BonMackeyTypeA} and \cite{BonCorMackeyTypeA} Taylor and Bonnaf\'e proved opportune versions of this formula for algebraic groups. The importance of Mackey formula version in this context had already been made clear in \cite{DigMiRepFinGrLieT} and previous work had been done in \cite[Theorem 6.8]{DeliLusRepRedGrFinFi}, \cite[Lemma 2.5]{LusSpaIndUniCl}  and \cite[Theorem 7]{DeliLusDuaRepRedGrFinFiII}.

Apparently, classical Mackey formula is so intuitive that can be applied, somehow in a non trivial way, in more general and different contexts.
In this direction, motivated by the study of the structure of biset functors over finite groups  (see \cite[Definitions 3.1.1, 3.2.2 ]{Bouc:2010} for pertinent definitions),  Serge Bouc  proved  in \cite[Lemma 2.3.24]{Bouc:2010} a different kind of the classical Mackey formula in the framework of group-bisets.  The gist is that, given two groups \(H\) and \(G\) and a field \(\mathbb{F}\), an \(\left({H,G}\right)\)-biset (of groups) is a left \(H\)-invariant and right \(G\)-invariant \(\mathbb{F}\)-basis of an \(\left({\mathbb{F}H, \mathbb{F}G}\right)\)-bimodule.
Since the classical Mackey formula on linear representations can be rephrased using bimodules, and bimodules induce bisets, the Mackey formula can be reformulated using an isomorphism of group-bisets (see the end of \cite[Section 1.1.5]{Bouc:2010}) which is furthermore reformulated in \cite[Lemma 2.3.24]{Bouc:2010}. We have to mention that, in \cite{BoucBisetsCatTensProd}, Bouc himself proved an additional version of the Mackey formula, which is expressed using bimodules and group-bisets.

Groupoids are natural generalization of groups and prove to be useful in different branches of mathematics, see  \cite{Brown:1987}, \cite{Cartier:2008} and \cite{WeiGrpdUnInExtSym} (and the references therein) for a brief survey. Specifically, a groupoid is defined as a small category whose every morphism is an isomorphism and can be thought as a ``group with many objects''.
In the same way, a group can be seen as a groupoid with only one object.
As explained in \cite{Brown:1987}, while a groupoid in its very simple facet can be seen as a disjoint union of groups, this forces unnatural choices of base points and obscures the overall structure of the situation.
Besides, even under this simplicity,  structured groupoids, like topological or differential groupoids, cannot even be thought like a disjoint union of topological or differential groups, respectively. Different specialists realized (see  for instance \cite{Brown:1987} and \cite[page 6-7]{Connes:1994}) in fact that the passage from groups to groupoids is not  a trivial research and have its own difficulties and  challenges.
Thus extending a certain well known result in groups context to the framework of groupoids, is not an easy task and there are often difficulties to overcome.  

The research of this paper fits in this context.  Thus, our main aim here is to extend the formula in  \cite[Lemma 2.3.24]{Bouc:2010} to groupoid-bisets and apply it to the particular case of groupoids of equivalence relations.  
The paper is written in very elementary set-theoretical language, in order to make its content accessible to all kinds of readers.

\subsection{Description of the main result}\label{ssec:raroes} 
A right  \emph{groupoid-set} over a groupoid $\cG$, is a set $X$ with  two maps:  \emph{the structure map} $\varsigma: X \to \Go$  and    \emph{the action map} $\rho: X\, \due \times {\Sscript{\varsigma}} {\, \Sscript{{\Sf{t}}}} \, \Ga \to X$, satisfying  pertinent conditions (see Definition \ref{def:Gset} below). A morphism of groupoid-sets (or \emph{$\cG$-equivariant map}) is a map which commutes with both structure and action maps.  Left groupoid-sets are defined by interchanging the source with the target, and both categories are isomorphic. In this way a goupoid-biset is nothing but  a set endowed simultaneously with  left and right actions, which are compatible in the obvious sense. Moreover, as in the case of group, any groupoid-biset leads to a left (or right) groupoid-set over a cartesian product  of the involved groupoids, and vice-versa (see Proposition \ref{pBisetsLeftSets} below for the complete proof of this fact).  

The category of (left or right) groupoid-sets over a fixed groupoid is in fact a monoidal symmetric category. In the case of right groupoid-sets, this category is equivalent to the category of functors from the opposite groupoid to the (core) of the category of sets. Formally there should not be a more advantageous choice between these two definitions. Nevertheless, for technical reasons, which we will explain in Remark~\ref{rem:Core} below,  we will not adopt here the functorial approach to groupoid-set;  indeed we will work with the above elementary definition.

Let $\cH$, $\cG$ and $\cK$ three groupoids and consider $\cM$ and $\cL$ two subgroupoids of $\cK \times \cH$ and $\cH \times \cG$, respectively, with $\Mo=\Ko \times \Ho$ and $\Lo=\Ho\times \Go$. Let $\left({\frac{\mathcal{K} \times \mathcal{H} }{\mathcal{M} } }\right)^{\Sscript{\mathsf{L} }} $ and $\left({\frac{\mathcal{H} \times \mathcal{G} }{\mathcal{L} } }\right)^{\Sscript{\mathsf{L}} }$ be the corresponding  left cosets of $\cM$ and $\cL$, respectively (see the left version of Definition \ref{def:coset}, equation \eqref{Eq:L}). 

Consider the following  $(\cM, \cL)$-biset, see Proposition \ref{pModifDoubleCosets}:
\begin{equation*}\label{Eq:XxA} 
X= \Set{ \left({w, u, h, v, a}\right) \in \mathcal{K}_{ \scriptscriptstyle{0} } \times \mathcal{H}_{ \scriptscriptstyle{0} } \times \mathcal{H}_{ \scriptscriptstyle{1} } \times \mathcal{H}_{ \scriptscriptstyle{0} } \times \mathcal{G}_{ \scriptscriptstyle{0} }  |  \begin{gathered}
\left({w, u}\right) \in \mathcal{M}_{ \scriptscriptstyle{0} }, \, \left({v, a}\right) \in \mathcal{L}_{ \scriptscriptstyle{0} }, \\
 u = \mathsf{t}\left({h}\right),\,  v= \mathsf{s}\left({h}\right) 
\end{gathered} }.
\end{equation*}
Denote by  \(\rep_{ \scriptscriptstyle{\left(\mathcal{M},\,  \mathcal{L}\right)}}(X)\) a fixed set of representatives of the orbits of $X$ as \(\left({\mathcal{M}, \mathcal{L}}\right)\)-biset.
For each element $\left({w,\, u,\, h, \,v,\, a}\right) \,  \in  \,  \rep_{\scriptscriptstyle{\left(\mathcal{M}, \,  \mathcal{L}\right)}}(X)$, 
we define as in Definition \ref{dGrpdStarGrpd}, the subgroupoid $\mathcal{M}^{\Sscript{( w, \, u)}}  \ast \left({ ^{\left({h, \, \iota_{ \scriptscriptstyle{a} } }\right) } \mathcal{L}^{\Sscript{( u, \, a)}} }\right)$ (with only one object) of the groupoid $\cK \times \cG$. Our main result stated as 
Theorem \ref{thm:Main} below says:

{\renewcommand{\thetheorem}{{\bf A}}
\begin{theorem}[Mackey Formula for groupoid-bisets]\label{thm:A}
There is  a (non canonical) isomorphism of bisets
\begin{equation}
  \begin{gathered}
\left({\frac{\mathcal{K} \times \mathcal{H} }{\mathcal{M} } }\right)^{\Sscript{\mathsf{L} }} \otimes_{ \scriptscriptstyle{\mathcal{H} } } \left({\frac{\mathcal{H} \times \mathcal{G} }{\mathcal{L} } }\right)^{\Sscript{\mathsf{L}} } 
\cong   \biguplus_{\left({w,\, u,\, h, \,v,\, a}\right) \,  \in  \,  \rep_{\scriptscriptstyle{\left(\mathcal{M}, \,  \mathcal{L}\right)}}(X)    } \left({\frac{\mathcal{K} \times \mathcal{G} }{\mathcal{M}^{\Sscript{( w, \, u)}}  \ast \left({ ^{\left({h, \, \iota_{ \scriptscriptstyle{a} } }\right) } \mathcal{L}^{\Sscript{( u, \, a)}} }\right)  } }\right)^{\Sscript{\mathsf{L}} },
\end{gathered}
\end{equation} 
where the symbol $-\tensor{\cH}-$ stand for the tensor product over $\cH$, and where the right hand-side term in the formula is a coproduct in the category of $(\cM,\cL)$-bisets of left cosets.
\end{theorem}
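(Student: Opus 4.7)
The plan is to adapt Bouc's proof of the group-biset Mackey formula \cite[Lemma 2.3.24]{Bouc:2010} to the groupoid setting, relying on the machinery developed in the preceding sections. First I would describe the elements of the tensor product $\left(\frac{\cK \times \cH}{\cM}\right)^{\Sscript{\mathsf{L}}} \otimes_{\Sscript{\cH}} \left(\frac{\cH \times \cG}{\cL}\right)^{\Sscript{\mathsf{L}}}$ as the $(\cM,\cL)$-orbits on a natural set of composable pairs of cosets. The set $X$ from Proposition \ref{pModifDoubleCosets} is, by construction, in $(\cM,\cL)$-equivariant bijection with this set of composable pairs: a quintuple $(w,u,h,v,a) \in X$ encodes a coset with base point $(w,u)\in\Mo$ in the first factor, a coset with base point $(v,a)\in\Lo$ in the second factor, and an arrow $h\in\Ha$ that glues them across $\cH$ via the tensor product balancing relation. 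Consequently, as a $(\cM,\cL)$-biset, the tensor product decomposes into a coproduct of its orbits, indexed by the chosen set $\rep_{\Sscript{(\cM,\cL)}}(X)$ of representatives.

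Next, for each fixed representative $\bar{x}=(w,u,h,v,a)$ I would identify the corresponding orbit inside the tensor product with the left-coset biset $\left(\frac{\cK \times \cG}{\cM^{\Sscript{(w,u)}}\ast\, {}^{(h,\iota_{\Sscript{a}})}\cL^{\Sscript{(u,a)}}}\right)^{\Sscript{\mathsf{L}}}$ via an orbit-stabilizer argument tailored to groupoid-bisets. Explicitly, consider the $(\cK,\cG)$-equivariant map sending a pair $(k,g)\in\Ka\times\cG_{\Sscript{1}}$ whose source lies at $(w,a)$ to the tensor element represented, after gluing by $h$, by the image of $(w,u)$ under $(k,\iota_{\Sscript{u}})$ in the first factor and the image of $(v,a)$ under $(\iota_{\Sscript{v}},g)$ in the second. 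By Definition \ref{dGrpdStarGrpd}, the stabilizer of $\bar{x}$ under the combined $(\cM,\cL)$-action on $\cK\times\cG$ coincides with the one-object subgroupoid $\cM^{\Sscript{(w,u)}}\ast\, {}^{(h,\iota_{\Sscript{a}})}\cL^{\Sscript{(u,a)}}$, and the standard orbit/left-coset correspondence supplies the desired bijection on each component.

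Finally, gluing these per-representative bijections assembles a $(\cK,\cG)$-biset isomorphism between the right-hand coproduct and the tensor product; equivariance is automatic because on each side both the $\cK$- and the $\cG$-actions are inherited from $\cK\times\cG$ and commute with the $(\cM,\cL)$-quotienting. The main obstacle I anticipate is the stabilizer computation: unpacking $\cM^{\Sscript{(w,u)}}\ast\, {}^{(h,\iota_{\Sscript{a}})}\cL^{\Sscript{(u,a)}}$ requires carefully tracking source and target objects at every stage, since, unlike in the group case, two subgroupoids of $\cK\times\cG$ cannot simply be intersected as subgroups would be, and the translate ${}^{(h,\iota_{\Sscript{a}})}\cL^{\Sscript{(u,a)}}$ must be interpreted as an isotropy transported along a specific arrow. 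A minor but noteworthy point is that the resulting isomorphism depends on the choice of representatives, which is precisely the reason for the adjective ``non canonical'' in the statement.
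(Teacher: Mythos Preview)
Your approach is essentially the paper's: decompose the tensor product---which is a $(\cK,\cG)$-biset, not an $(\cM,\cL)$-biset as you write in the first paragraph---into its $(\cK,\cG)$-orbits via Corollary~\ref{lema:LAquila} and Proposition~\ref{pIsomDueStab}, construct an explicit bijection between the set of these orbits and $\cM\backslash X/\cL$, and then verify that the stabilizer $(\cK,\cG)_y$ of each normalized representative $y$ equals $\cM^{\Sscript{(w,u)}}\ast{}^{\Sscript{(h,\iota_a)}}\cL^{\Sscript{(u,a)}}$. The paper carries out precisely these three steps, writing down mutually inverse maps $\varphi$ and $\psi$ between the two orbit sets and doing the stabilizer computation exactly where you anticipate the difficulty.
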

}

Since the proof of this formula  is technical and for the sake of completeness, we decided to include  most of the details of the proof,  as well as the  notions involved therein. At the end of the paper (subsection~\ref{ssec:EqMF}), we provide an elementary application of this formula involving groupoids of equivalence relations (see Example~\ref{exam:X}).

\section{Abstract groupoids: General notions and basic properties}\label{sec:Grpd}
In this section we will recall from \cite{ElKaoutit:2017} the basic notions and properties that will be used in the forthcoming sections, as well as some motivating examples. 
We will expound the main concepts regarding groupoid actions: equivariant maps, orbit sets and stabilizers.

\subsection{Notations, basic notions and examples}\label{ssec:basic}
A \emph{groupoid}  is a small category where each morphism is an isomorphism. That is, a pair of two sets $\cG:=(\cG_{\Sscript{1}}, \cG_{\Sscript{0}})$ with diagram of sets
$$
\xymatrix@C=35pt{\cG_{\Sscript{1}}\ar@<0.70ex>@{->}|-{\scriptscriptstyle{{\Sf{s}}}}[r] \ar@<-0.70ex>@{->}|-{\scriptscriptstyle{{\Sf{t}}}}[r] & \ar@{->}|-{ \scriptscriptstyle{\iota}}[l] \cG_{\Sscript{0}}},$$
where $\Sf{s}$ and $\Sf{t}$ are resp.~ the source and the target of a given arrow, and $\iota$ assigns to each object its identity arrow; together with an associative and unital multiplication  $\cG_{\Sscript{2}}:= \cG_{\Sscript{1}}\, \due  \times {\Sscript{{\Sf{s}}}} {\, \Sscript{{\Sf{t}}}} \, \cG_{\Sscript{1}} \to \cG_{\Sscript{1}}$  as well as a map $\cG_{\Sscript{1}} \to \cG_{\Sscript{1}}$ which associated to each arrow its inverse. Notice, that $\iota$ is an injective map, and so $\Go$ is identified with a subset of $\Ga$. 
A groupoid is then a small category with more structure, namely, the map which send any arrow to its inverse. We implicitly identify a groupoid with its underlying category. Interchanging the source and the target will lead to \emph{the opposite groupoid} which we denote by $\Gop$.

Given a groupoid $\cG$, consider two objects $x, y \in \cG_{\Sscript{0}}$. We denote by $\cG(x,y)$ the set of all arrows with source $x$ and target $y$.  \emph{The isotropy group of $\cG$ at $x$} is then the group:
\begin{equation}\label{Eq:isotropy}
\cG^{\Sscript{x}}:=\cG(x,x)\,=\,\Big\{ g \in \cG_{\Sscript{1}}|\, \Sf{t}(g)=\Sf{s}(g)=x \Big\}.
\end{equation}
Clearly each of the sets $\cG(x,y)$ is, by the groupoid multiplication, a left $\cG^{\Sscript{y}}$-set and right $\cG^{\Sscript{x}}$-set. In fact, each of the $\cG(x,y)$ sets is a $(\cG^{\Sscript{y}}, \cG^{\Sscript{x}})$-biset,  in the sense of \cite{Bouc:2010}.

A \emph{morphism of groupoids} $\phiup: \cH \to \cG$ is a functor between the underlying categories.  Obviously any one of these morphisms  induces  homomorphisms of groups between the isotropy groups: $\phiup^{\Sscript{y}}: \cH^{\Sscript{y}}  \to \cG^{\Sscript{\phiup_0(y)}}$, for every $y \in H_{\Sscript{0}}$. The family of  homomorphisms  $\{\phiup^{\Sscript{y}}\}_{ y \, \in \, \Ho }$ is refereed to as \emph{the isotropy maps of $\phiup$}. 
For a fixed object $x \in \Go$, its fibre $\phio^{-1}(\{x\})$, if not empty, leads to  the following "star" of homomorphisms of groups:
\begin{scriptsize}
\begin{center}
\tikzstyle{every pin edge}=[<-,shorten <=1pt]
\tikz[pin distance=5mm]
\node [circle, draw,pin=right:{$\cH^{\Sscript{y}}$},
pin=above right:{$\cH^{\Sscript{y}}$},
pin=above:{$\cH^{\Sscript{y}}$}, pin=above left:{$\cH^{\Sscript{y}}$}, pin=left:{$\cH^{\Sscript{y}}$}, pin=below:{$\cH^{\Sscript{y}}$}, pin=below left:{$\cH^{\Sscript{y}}$},pin=below right:{$\cH^{\Sscript{y}}$} ]
{$\cG^{\Sscript{x}}$};
\end{center}
\end{scriptsize}
where $y$ runs in the fibre $\phio^{-1}(\{x\})$.

\begin{example}[Trivial and product groupoids]\label{exam:trivial}
Let $X$ be a set. Then the pair $(X, X)$ is obviously a groupoid (in fact a small discrete category, i.e., with only identities as arrows) with trivial structure. This is known as \emph{the trivial groupoid}. 

The \emph{product $\cG \times \cH$ of two groupoids} $\cG$ and $\cH$ is the groupoid whose sets of objects and arrows, are respectively, the  Cartesian products  $\Go \times \Ho$ and  $\Ga \times \Ha$. The multiplication, inverse and unit arrow are canonically given as follows:
$$
(g,h) \, (g', h') \,=\, (gg', hh'), \quad 	\quad (g,h)^{-1}\,=\, (g^{-1}, h^{-1}), \quad \iota_{\Sscript{(x,\, u)}}\,=\, (\iota_{\Sscript{x}}, \iota_{\Sscript{u}}).
$$
\end{example}

\begin{example}[Action groupoid]\label{exam:action}
Any group $G$ can be considered as a groupoid by taking $G_{\Sscript{1}}=G$ and $G_{\Sscript{0}}=\{*\}$ (a set with one element). Now if $X$ is a right $G$-set with action $\rho: X\times G \to X$, then one can define the so called \emph{the action groupoid $\gG$} whose set of objects is $G_{\Sscript{0}}=X$ and its set of arrows is $G_{\Sscript{1}}=X \times G$; the source and the target are $\Sf{s}=\rho$ and $\Sf{t}=pr_{\Sscript{1}}$, the identity map sends $x \mapsto (x,e)=\iota_{\Sscript{x}}$, where $e$ is the identity element of $G$. The multiplication is given by  $(x,g) (x',g')=(x,gg')$, whenever $xg=x'$, and the inverse is defined by $(x,g)^{-1}=(xg,g^{-1})$. Clearly the pair of maps $(pr_{\Sscript{2}}, *): \cG= (G_{\Sscript{1}}, G_{\Sscript{0}}) \to (G,\{*\})$  defines a morphism of groupoids. For a given $ x \in X$, the isotropy group $\cG^{\Sscript{x}}$ is clearly identified with the stabilizer $\emph{Stab}_{\Sscript{G}}(x)=\{g \in G |\, gx=x\}$ subgroup of $G$.  
\end{example}

\begin{example}[Equivalence relation groupoid]\label{exam:X} 
We expound here several examples ordered by inclusion. 
\begin{enumerate}[(1)] 
\item One can associated to  a given set $X$ the so called \emph{the groupoid  of pairs} (called \emph{fine groupoid} in \cite{Brown:1987} and \emph{simplicial groupoid} in \cite{Higgins:1971}), its set of arrows is defined by $G_{\Sscript{1}}=X \times X$ and the set of objects by $G_{\Sscript{0}}=X$; the source and the target are $\Sf{s}=pr_{\Sscript{2}}$ and $\Sf{t}=pr_{\Sscript{1}}$, the second and the first projections, and the  map of identity arrows $\iota$ is the diagonal map $x \mapsto (x,x)$. The multiplication and the inverse maps are given by 
$$
(x,x') \, (x',x'')\,=\, (x,x''),\quad \text{and} \quad (x,x')^{-1}\,=\, (x',x).
$$

\item Let $\nuup: X \to Y$ be a map.  Consider the fibre product $X\, \due \times {\Sscript{\nuup}} {\; \Sscript{\nuup}} \, X$  as a set of arrows of the groupoid $\xymatrix@C=35pt{ X\, \due \times {\Sscript{\nuup}} {\; \Sscript{\nuup}} \, X \ar@<0.8ex>@{->}|-{\scriptscriptstyle{pr_2}}[r] \ar@<-0.8ex>@{->}|-{\scriptscriptstyle{pr_1}}[r] & \ar@{->}|-{ \scriptscriptstyle{\iota}}[l] X, }$  where as before  $\Sf{s}=pr_{\Sscript{2}}$ and $\Sf{t}=pr_{\Sscript{1}}$,  and the  map of identity arrows $\iota$ is the diagonal map. The multiplication and the inverse are clear.  

\item Assume that $\cR \subseteq X \times X$ is an equivalence relation on the set $X$.  One can construct a groupoid $\xymatrix@C=35pt{\cR \ar@<0.8ex>@{->}|-{\scriptscriptstyle{pr_2}}[r] \ar@<-0.8ex>@{->}|-{\scriptscriptstyle{pr_1}}[r] & \ar@{->}|-{ \scriptscriptstyle{\iota}}[l] X, }$  with structure maps as before. This is  an important class of groupoids  known as \emph{the groupoid of equivalence relation} (or \emph{equivalence relation groupoid}). Obviously $(\cR, X) \hookrightarrow  (X\times X, X)$ is a morphism of groupoid, see for instance \cite[Example 1.4, page 301]{DemGab:GATIGAGGC}.
\end{enumerate}
Notice, that in all these examples each of the isotropy groups is the trivial group. 
\end{example}

\begin{example}[Induced groupoid]\label{exam:induced}
Let $\cG=(\cG_{\Sscript{1}},\cG_{\Sscript{0}})$ be a groupoid and $\varsigma:X \to \cG_{\Sscript{0}}$ a map. Consider the following  pair of sets:
$$
\cG^{\Sscript{\varsigma}}{}_{\Sscript{1}}:= X \,  \due \times {\Sscript{\varsigma}} {\, \Sscript{\Sf{t}}} \, \cG_{\Sscript{1}} \;  \due \times {\Sscript{\Sf{s}}} {\, \Sscript{\varsigma}}  \, X= \Big\{ (x,g,x') \in X\times \cG_{\Sscript{1}}\times X| \;\; \varsigma(x)=\Sf{t}(g), \varsigma(x')=\Sf{s}(g)  \Big\}, \quad \cG^{\Sscript{\varsigma}}{}_{\Sscript{0}}:=X.
$$
Then $\cG^{\Sscript{\varsigma}}{}=(\cG^{\Sscript{\varsigma}}{}_{\Sscript{1}}, \cG^{\Sscript{\varsigma}}{}_{\Sscript{0}})$ is a groupoid, with structure maps: $\Sf{s}= pr_{\Sscript{3}}$, $\Sf{t}= pr_{\Sscript{1}}$, $\iota_{\Sscript{x}}=(\varsigma(x), \iota_{\Sscript{\varsigma(x)}}, \varsigma(x))$, $x \in X$. The multiplication is defined by $(x,g,y) (x',g',y')= ( x,gg',y')$, whenever $y=x'$, and the inverse is given by $(x,g,y)^{-1}=(y,g^{-1},x)$. 
The groupoid $\cG^{\Sscript{\varsigma}}$ is known as \emph{the induced groupoid of $\gG$ by the map $\varsigma$}, (or \emph{ the pull-back groupoid of $\cG$ along $\varsigma$}, see   \cite{Higgins:1971} for dual notion).  Clearly, there  is a canonical morphism $\phi^{\Sscript{\varsigma}}:=(pr_{\Sscript{2}}, \varsigma): \cG^{\Sscript{\varsigma}} \to \cG$ of groupoids. 
A particular instance of an induced groupoid, is the one when $\cG=G$ is a groupoid with one object. Thus for any group $G$ one can consider the Cartesian product $X \times G \times X$ as a groupoid with set of objects $X$.  
\end{example}

\subsection{Groupoid actions and equivariant maps}\label{ssec:Grpd1} The following definition is a natural generalization to the context of groupoids, of the usual notion of group-set, see for instance \cite{Bouc:2010}. It is an abstract formulation of that given in \cite[Definition 1.6.1]{Mackenzie:2005} for Lie groupoids, and essentially the same definition based on the Sets-bundles notion given in  \cite[Definition 1.11]{Renault:1980}.
\begin{definition}\label{def:Gset}
Let  $\mathcal{G}$ be a groupoid and  $\varsigma:X \to \Go$ a map. We say that  $(X,\varsigma)$ is a \emph{right} $\cG$-\emph{set} (with a \emph{structure map} $\varsigma$), if there is a map (\emph{the action}) $\rho: X\, \due \times {\Sscript{\varsigma}} {\, \Sscript{{\Sf{t}}}} \, \Ga \to X$ sending $(x,g) \mapsto xg$ and  satisfying the following conditions:
\begin{enumerate}
\item $\Sf{s}(g)=\varsigma(xg)$, for any $x \in X$ and $g \in \Ga$ with $\varsigma(x)=\Sf{t}(g)$.
\item $x \, \iota_{\varsigma(x)}= x$, for every $x \in X$.
\item $ (xg)h= x(gh)$, for every $x \in X$, $g,h \in \Ga$ with $\varsigma(x)=\Sf{t}(g)$ and $\Sf{t}(h)=\Sf{s}(g)$.
\end{enumerate}
\end{definition}
In order to simplify the notation, the action map of a given right $\cG$-set $(X,\varsigma)$ will be omitted from the notation.  A \emph{left action} is analogously defined by interchanging the source with the target and similar notations will be employed. In general a set $X$ with a (right or left) groupoid action is called \emph{a groupoid-set}, we also employ the terminology: \emph{a set $X$ with a left (or right) $\cG$-action}. 
 
Obviously, any groupoid  $\cG$ acts  over itself on both sides by using the regular action, that is, the multiplication $\Ga \, \due \times {\Sscript{{\Sf{s}}}} {\, \Sscript{\Sf{t}}} \, \Ga \to \Ga$. This means that  $(\Ga, {s})$ is a right $\cG$-set and $(\Ga, {t})$ is a left $\cG$-set with this action.

Given a groupoid \(\mathcal{G}\), let \((X, \varsigma)\) be a right \(\mathcal{G}\)-set with  action map \(\rho\).
Then the pair of sets
\[ 
X \rtimes \mathcal{G} : = \Big( X  {\,{}_{ \scriptscriptstyle{\varsigma} } {\times}}_{ \scriptscriptstyle{\Sf{t}} }\, G_{ \scriptscriptstyle{1} }, X \Big) 
\]
is a groupoid with structure maps \(\mathsf{s}^{\rtimes }= \rho\), \(\mathsf{t}^{\rtimes}= \pr_{ \scriptscriptstyle{1} }\) and \(\iota_{ \scriptscriptstyle{x} }=(x,\iota_{ \scriptscriptstyle{\varsigma(x)} })\) for each \(x \in X\).
The multiplication and the inverse maps are defined as follows:
For each \((x,g), (y,h) \in X  {{}_{ \scriptscriptstyle{\varsigma} } {\times }}_{ \scriptscriptstyle{\mathsf{t}} }\, \mathcal{G}_{ \scriptscriptstyle{1} }\) such that \(\mathsf{t}^{\rtimes}(y,h)=\mathsf{s}^{\rtimes}(x,g)\) the multiplication is given by
\[ 
(x,g)(y,h)=(x,gh).
\]
That is, for any pairs of elements in \( X  {{}_{ \scriptscriptstyle{\varsigma} } {\times }}_{ \scriptscriptstyle{\mathsf{t}} }\, \mathcal{G}_{ \scriptscriptstyle{1} }\) as before, we have
\[ \begin{gathered}
\varsigma(x)=\mathsf{t}(g), \qquad  \mathsf{s}^{\rtimes}(x,g)=\rho(x,g)=xg, \qquad
\mathsf{t}^{\rtimes}(x,g)=\pr_{ \scriptscriptstyle{1} }(x,g)=x,   \\
\xymatrix{ \varsigma(x)   & & \varsigma(xg)  \ar[ll]_{g}} 
\end{gathered}
\]
and
\[ \begin{gathered}
\varsigma(y)=\mathsf{t}(h), \qquad  \mathsf{s}^{\rtimes}(y,h)=\rho(y,h)=yh, \qquad
\mathsf{t}^{\rtimes}(y,h)=\pr_{ \scriptscriptstyle{1} }(y,h)=y, \\
\xymatrix{ \varsigma(y)   & & \varsigma(yh).  \ar[ll]_{h}} 
\end{gathered}
\]
So, the multiplication is explicitly given by
\[ \begin{gathered}
y=\mathsf{t}^{\rtimes}(y,h)=\mathsf{s}^{\rtimes}(x,g)=xg, \\
\xymatrix{ \varsigma(x)   & & \varsigma(xg)= \varsigma(y)  \ar[ll]_{g} && \varsigma(yh)= \varsigma(xgh)  \ar[ll]_{h} \ar@/^2pc/[llll]_{gh} }
\end{gathered}
\]
and schematically can be presented by
\[  \xymatrix{ x   & &  xg=y  \ar[ll]_{(x,g)} && yh=xgh . \ar[ll]_{(y,h)} \ar@/^2pc/[llll]_{(x,\, gh)} }
\]

For each \((x,g)\in X  {{}_{ \scriptscriptstyle{\varsigma} } {\times }}_{ \scriptscriptstyle{\mathsf{t}} }\, \mathcal{G}_{ \scriptscriptstyle{1} }\) the inverse arrow is defined by \((x,g)^{-1}=(xg,g^{-1})\) .
The groupoid \(X \rtimes \mathcal{G}\) is called the \emph{right translation groupoid of \(X\) by \(\mathcal{G}\)}.
Furthermore, there is a canonical morphism of groupoids $\sigmaup: X\rJoin \cG \to \cG$, given by the pair of maps $\sigmaup=(\varsigma, pr_{\Sscript{2}})$. 

The \emph{left translation groupoids} are similarly defined and  denoted by $\cG \ltimes Z$ whenever $(Z, \vartheta)$ is a left $\cG$-set.

A \emph{morphism of  right $\cG$-sets} (or \emph{$\cG$-equivariant map})  $F: (X,\varsigma) \to (X',\varsigma')$ is a map $F:X \to X'$ such that the diagrams 

\begin{equation}\label{Eq:Gequi}
\begin{gathered}
\xymatrix@R=12pt{ & X \ar@{->}_-{\Sscript{\varsigma}}[ld]  \ar@{->}^-{F}[dd] & \\ \mathcal{G}_{\Sscript{0}}& & \\ & X' \ar@{->}^-{\Sscript{\varsigma'}}[lu]  & } \qquad  \qquad \xymatrix@R=12pt{X\, \due \times {\Sscript{\varsigma}} {\, \Sscript{\Sf{t}}} \,  \Ga \ar@{->}^-{}[rr]  \ar@{->}_-{\Sscript{F\, \times \, id}}[dd] & & X  \ar@{->}^-{\Sscript{F}}[dd] \\  & & \\ X'\, \due \times {\Sscript{\varsigma'}} {\, \Sscript{\Sf{t}}} \,  \Ga  \ar@{->}^-{}[rr] & & X'  } 
\end{gathered}
\end{equation}
commute. We denote by $\hom{\Gsets}{X}{X'}$ the set of all $\cG$-equivariant maps from $(X,\varsigma)$ to $(X',\varsigma')$. Clearly any such a $\cG$-equivariant map induces a morphism of groupoids $\Sf{F}: X \rJoin \cG \to X' \rJoin \cG$. A subset $Y \subseteq X$ of a right $\cG$-set $(X,\varsigma)$,  is said to be \emph{$\cG$-invariant} whenever the  inclusion $Y \hookrightarrow X$ is a $\cG$-equivaraint map.

\begin{remark}\label{rem:Core}
A right $\cG$-set can be defined also as a functor $\xX: \cG^{\Sscript{op}} \to \mathit{Sets}$ to the core of the category of sets. Thus to any functor of this kind, we can  attach to it a set $X=\biguplus _{a \in \Go } \xX(a)$ with the canonical map $\varsigma: X \to \Go$ and action $\varrho: X\, \due \times {\Sscript{\varsigma}} {\, \Sscript{{\Sf{t}}}} \, \Ga \to X$ sending $(x,g) \mapsto xg:=\xX(g)(x)$. Notice that, if  none of the fibers $\xX(a)$ is an empty set, then the induced map $\varsigma$ is surjective. However, this is not always the case, see the  example expound in subsection \ref{ssec:bonito}.  

Any natural transformation between functors as above leads to a $\cG$-equivariant map between the associated right $\cG$-sets. This in fact establishes an equivalence of categories between the category of the functors of this form (i.e.,~ functors  $\cG^{\Sscript{op}}$ to the core category of the category of sets) and the category of right $\cG$-sets. The functor, which goes in the opposite direction,  associates to any  right $\cG$-set $(X, \varsigma)$ the functor $\xX: \cG^{\Sscript{op}} \to \mathit{Sets}$ which sends any object $a \in \Go$ to the fibre $\varsigma^{-1}(\{a\})$, and  any arrow $g$ in $\cG^{\Sscript{op}}$ to the bijective map $\varsigma^{-1}(\{\Sf{t}(g)\}) \to 	\varsigma^{-1}(\{\Sf{s}(g)\})$, $x \mapsto xg$.    

Formally there should not be a more advantageous choice between these two definitions. Nevertheless, in our opinion, for technical reasons it is perhaps better to deal with groupoid-sets as given in Definition~\ref{def:Gset}, instead of the aforementioned functorial approach. Specifically,  the latter approach presents an inconvenient, since  one is forced to distinguish,  in certain ``local" proofs, between the cases when the fiber is empty and when it is not.  There is no such  difficulty  using Definition \ref{def:Gset}, as we will see in the sequel. 
\end{remark}

\begin{example}\label{exam: HG}
Let $\phiup: \cH \to \cG$ be a morphism of groupoids. Consider the triple $(\Ho\, \due \times {\Sscript{\phiup_0}} {\, \Sscript{\Sf{t}}} \,  \Ga, pr_{\Sscript{1}}, \varsigma)$, where $\varsigma: \Ho\, \due \times {\Sscript{\phiup_0}} {\, \Sscript{\Sf{t}}} \,  \Ga \to \Go$ sends $(u,g) \mapsto s(g)$, and $pr_{\Sscript{1}}$ is the first projection. Then the following maps 
\begin{equation}\label{Eq:HG}
\begin{gathered}
\xymatrix@R=0pt@C=10pt{ \Big(\Ho\, \due \times {\Sscript{\phiup_0}} {\, \Sscript{\Sf{t}}} \,  \Ga\Big) \, \due \times {\Sscript{\varsigma}} {\, \Sscript{\Sf{t}}} \,  \Ga \ar@{->}^-{}[r] &  \Ho\, \due \times {\Sscript{\phiup_0}} {\, \Sscript{\Sf{t}}} \,  \Ga,  \\ \Big((u,g'),g \Big)\ar@{|->}^-{}[r]  &  (u,g')  \rgaction g:=(u, g'g)}  \quad \xymatrix@R=0pt@C=10pt{ \Ha \, \due \times {\Sscript{\Sf{s}}} {\, \Sscript{pr_1}} \,  \Big(\Ho\, \due \times {\Sscript{\phiup_0}} {\, \Sscript{\Sf{t}}} \,  \Ga\Big) \ar@{->}^-{}[r] &  \Ho\, \due \times {\Sscript{\phiup_0}} {\, \Sscript{\Sf{t}}} \,  \Ga \\ \Big(h, (u,g)\Big) \ar@{|->}^-{}[r] &  h \lhaction (u,g):=(t(h), \phiup(h)g) }
\end{gathered}
\end{equation}
define, respectively, a  structure of  right $\cG$-sets and  that of  left $\cH$-set.  Analogously,  the maps 
\begin{equation}\label{Eq:GH}
\begin{gathered}
\xymatrix@R=0pt@C=10pt{ \Big(\Ga\, \due \times {\Sscript{\Sf{s}}} {\, \Sscript{\phiup_0}} \,  \Ho\Big) \, \due \times {\Sscript{pr_2}} {\, \Sscript{\Sf{t}}} \,  \Ha \ar@{->}^-{}[r] & \Ga\, \due \times {\Sscript{\Sf{s}}} {\, \Sscript{\phiup_0}} \,  \Ho   \\ \Big((g,u),h \Big)\ar@{|->}^-{}[r]  &   (g,u) \rhaction h:= (g\phiup(h), s(h))}  \;\;  \xymatrix@R=0pt@C=10pt{ \Ga \, \due \times {\Sscript{\Sf{s}}} {\, \Sscript{\vartheta}} \,  \Big(\Ga\, \due \times {\Sscript{\Sf{s}}} {\, \Sscript{\phiup_0}} \,  \Ho\Big) \ar@{->}^-{}[r] &  \Ga\, \due \times {\Sscript{\Sf{s}}} {\, \Sscript{\phiup_0}} \,  \Ho  \\ \Big(g, (g',u)\Big) \ar@{|->}^-{}[r] & g \lgaction (g',u):= (gg',u)}
\end{gathered}
\end{equation}
where $\vartheta: \Ga\, \due \times {\Sscript{\Sf{s}}} {\, \Sscript{\phiup_0}} \,  \Ho \to \Go$ sends $(g,u) \mapsto t(g)$, define a left $\cH$-set and right $\cG$-set structures on $\Ga\, \due \times {\Sscript{\Sf{s}}} {\, \Sscript{\phiup_0}} \,  \Ho$, respectively. 
This in particular can be applied to any morphism of groupoids of the form $(X,X)\to (Y \times Y, Y)$, $(x,x') \mapsto \big((f(x),f(x)), \, f(x')\big)$, where $f: X\to Y$ is any map. On the other hand, if $f$ is a $G$-equivariant map, for some a group $G$ acting on both $X$ and $Y$, then the above construction applies to the morphism  $(G\times X, X) \to (G\times Y, Y)$ sending $\big(  (g,x), x' \big) \mapsto \big( (g,f(x)) , f(x')\big)$ of action groupoids, as well. 
\end{example}

The proofs of the following useful Lemmas  are immediate. 

\begin{lemma}\label{lInvEquivaIsEquiva}
Given a groupoid \(\mathcal{G}\), let \((X, \varsigma)\) be a right \(\mathcal{G}\)-set with action \(\rho\) and let be \((X', \varsigma')\) be a right \(\mathcal{G}\)-set with  action \(\rho'\).
Let \(F \colon \left(X, \varsigma\right) \longrightarrow \left(X', \varsigma'\right)\) be a \(\mathcal{G}\)-equivariant map with bijective underlying map.
Then \(F^{-1} \colon \left(X', \varsigma'\right) \longrightarrow \left(X, \varsigma\right)\) is also \(\mathcal{G}\)-equivariant.
\end{lemma}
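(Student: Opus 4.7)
The plan is to verify directly that the set-theoretic inverse $F^{-1}\colon X'\to X$ satisfies the two compatibility diagrams displayed in \eqref{Eq:Gequi} with respect to the $\mathcal{G}$-actions on $(X',\varsigma')$ (source) and $(X,\varsigma)$ (target). Since $F$ is bijective, the map $F^{-1}$ exists as a map of sets, so the only content is the compatibility with the structure maps and with the actions.

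For the triangle diagram one starts from the equality $\varsigma'\circ F=\varsigma$, which is precisely the commutativity of the left-hand diagram in \eqref{Eq:Gequi} for $F$. Precomposing both sides with $F^{-1}$ yields $\varsigma'=\varsigma\circ F^{-1}$, which is the triangle condition for $F^{-1}$. In particular, for any $x'\in X'$ and any $g\in\mathcal{G}_1$ with $\varsigma'(x')=\mathsf{t}(g)$, one automatically has $\varsigma(F^{-1}(x'))=\mathsf{t}(g)$, so $(F^{-1}(x'),g)$ lies in $X\,\due\times{\Sscript{\varsigma}}{\,\Sscript{\Sf{t}}}\,\mathcal{G}_1$ and the right-hand square of \eqref{Eq:Gequi} for $F^{-1}$ at least makes sense.

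For the action square, pick such a pair $(x',g)$ and set $x:=F^{-1}(x')\in X$. Then the equivariance of $F$ gives $F(xg)=F(x)g=x'g$, and applying $F^{-1}$ to both sides yields $xg=F^{-1}(x'g)$, i.e.\ $F^{-1}(x')g=F^{-1}(x'g)$. This is exactly the commutativity of the action square of \eqref{Eq:Gequi} for $F^{-1}$.

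There is no real obstacle here: the argument is purely formal, using only that $F$ is a bijection and that inverses can be cancelled on either side. The one small point to be careful about is that the fibered product $X'\,\due\times{\Sscript{\varsigma'}}{\,\Sscript{\Sf{t}}}\,\mathcal{G}_1$ on which one must verify equivariance of $F^{-1}$ is correctly identified via $F^{-1}\times\mathrm{id}$ with $X\,\due\times{\Sscript{\varsigma}}{\,\Sscript{\Sf{t}}}\,\mathcal{G}_1$, and this is ensured precisely by the triangle relation $\varsigma\circ F^{-1}=\varsigma'$ proved in the first step.
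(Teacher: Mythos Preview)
Your proof is correct and is exactly the direct verification the paper has in mind; the paper itself omits the argument entirely, stating only that the proof is immediate. There is nothing to compare: you have simply written out the obvious check of the two diagrams in \eqref{Eq:Gequi}.
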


\begin{lemma}
\label{lActionRestriction}
Given a groupoid \(\mathcal{G}\), let \((X, \varsigma)\) be a right \(\mathcal{G}\)-set with action \(\rho\) and let be \(Y \subseteq X\).
We define
\[ \varsigma'= \left.{\varsigma}\right|_{Y} \colon Y \longrightarrow \mathcal{G}_{ \scriptscriptstyle{0} }
\qquad \text{and} \qquad
\rho' = \left.{\rho}\right|_{Y  {{}_{ \scriptscriptstyle{ \varsigma '} } {\times}}_{ \scriptscriptstyle{\mathsf{t} } }\, \mathcal{G}_{ \scriptscriptstyle{1} }} \colon Y  {{}_{ \scriptscriptstyle{\varsigma'} } {\times}}_{ \scriptscriptstyle{\mathsf{t}} }\, \mathcal{G}_{ \scriptscriptstyle{1} } \longrightarrow X
\]
and let's suppose that for each \((a,g) \in Y  {{}_{ \scriptscriptstyle{ \varsigma '} } {\times}}_{ \scriptscriptstyle{\mathsf{t} } }\, \mathcal{G}_{ \scriptscriptstyle{1} }\) we have \(\rho(a,g) \in Y\).
Then \((Y,\varsigma')\) is a right \(\mathcal{G}\)-set with action map \(\rho'\). In particular $Y$ is a $\cG$-invariant subset of $X$.
\end{lemma}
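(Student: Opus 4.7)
The plan is to verify the three axioms of Definition~\ref{def:Gset} for the pair $(Y,\varsigma')$ with the map $\rho'$, and then to check that the inclusion $Y\hookrightarrow X$ is a $\cG$-equivariant map, which by definition amounts to saying that $Y$ is a $\cG$-invariant subset of $X$.

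First, I would observe that the hypothesis $\rho(a,g)\in Y$ for every $(a,g)\in Y\, \due \times {\Sscript{\varsigma'}} {\, \Sscript{\Sf{t}}} \,\Ga$ is precisely what is needed to ensure that the corestriction $\rho'\colon Y\, \due \times {\Sscript{\varsigma'}} {\, \Sscript{\Sf{t}}} \,\Ga \to Y$ is well defined as a map landing in $Y$ (and not merely in $X$). The map $\varsigma'$ is clearly well defined by restriction.

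Next, the three axioms of a right $\cG$-set transfer automatically from $(X,\varsigma)$ to $(Y,\varsigma')$, because for every $a\in Y$ we have $\varsigma'(a)=\varsigma(a)$ and for every $(a,g)\in Y\, \due \times {\Sscript{\varsigma'}} {\, \Sscript{\Sf{t}}} \,\Ga$ we have $\rho'(a,g)=\rho(a,g)$. Concretely: axiom (1) gives $\Sf{s}(g)=\varsigma(ag)=\varsigma'(ag)$ since $ag\in Y$; axiom (2) gives $a\,\iota_{\Sscript{\varsigma'(a)}}=a\,\iota_{\Sscript{\varsigma(a)}}=a$; axiom (3) gives $(ag)h=a(gh)$ for compatible $g,h\in\Ga$, the composability conditions being the same in $Y$ as in $X$. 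Each identity is inherited from the corresponding identity for $(X,\varsigma)$ by interpreting the elements of $Y$ as elements of $X$.

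Finally, to conclude that $Y$ is $\cG$-invariant, I would verify that the canonical inclusion $\iota\colon (Y,\varsigma')\hookrightarrow (X,\varsigma)$ makes both diagrams of~\eqref{Eq:Gequi} commute. The left triangle commutes because $\varsigma\circ\iota=\varsigma'$ by definition of $\varsigma'$ as the restriction of $\varsigma$. The right square commutes because, for every $(a,g)\in Y\, \due \times {\Sscript{\varsigma'}} {\, \Sscript{\Sf{t}}} \,\Ga$, we have $\iota(\rho'(a,g))=\rho'(a,g)=\rho(a,g)=\rho(\iota(a),g)$, again by the very definition of $\rho'$ as the corestriction of the restriction of $\rho$. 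There is no substantive obstacle here; the statement is essentially a tautological consequence of the assumption that $Y$ is closed under the action, and the main point is bookkeeping to confirm that restricting a $\cG$-set to a stable subset produces a $\cG$-set in the sense of Definition~\ref{def:Gset}.
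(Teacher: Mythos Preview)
Your proposal is correct and matches the paper's approach: the paper simply declares the proof immediate, and your write-up spells out exactly the routine verification (axioms (1)--(3) of Definition~\ref{def:Gset} inherited by restriction, plus commutativity of the diagrams in~\eqref{Eq:Gequi}) that this word ``immediate'' is standing in for.
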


\subsection{Orbit sets and stabilizers}\label{ssec:Orbits}

Next we recall the notion (see, for instance, \cite[page~11]{Jelenc:2013}) of the orbit set attached to a right groupoid-set.
This notion is a generalization of the orbit set in the context of group-sets.  Here we use the (right) translation groupoid to introduce this set. 
First we recall the notion of the orbit set of a given groupoid.  \emph{The orbit set of a groupoid} $\cG$ is the  quotient set of $\mathcal{G}_{\Sscript{0}}$  by the following equivalence relation: take an object $x \in \cG_{\Sscript{0}}$, define 
\begin{equation}\label{Eq:Ox}
\oO_{\Sscript{x}}:=\Sf{t}\big( \Sf{s}^{-1}(\{x\})\big)\, =\,\Big\{  y \in \cG_{\Sscript{0}}|\; \exists \,g \in \cG_{\Sscript{1}}\, \text{ such that }\, \Sf{s}(g)=x, \, \Sf{t}(g)=y \Big\},
\end{equation}
which is equal to  the set $\Sf{s}\big( \Sf{t}^{-1}(\{x\})\big)$. This is a non empty set, since $x \in \oO_{\Sscript{x}}$.
Two objects $x, x' \in \Go$ are said to be equivalent if and only if $\oO_{\Sscript{x}}\,=\, \oO_{\Sscript{x'}}$, or equivalently, two objects are equivalent if and only if there is an arrow connecting them.  This in fact defines an equivalence relation whose quotient set is denoted by $\Go/\cG$. In others words, this is the set of all connected components of $\cG$, which we denote by $\pi_{\Sscript{0}}(\cG):=\Go/\cG$. 

Given  a right $\cG$-set  $(X,\varsigma)$, the \emph{orbit set}  $X/\cG$ of $(X,\varsigma)$ is the orbit set of the (right) translation groupoid  $X \rJoin \cG$, that is, $X/\cG=\pi_{\Sscript{0}}(X \rJoin \cG)$.  For an element \(x \in X\),  the \emph{equivalence class}  of $x$, called the \emph{the orbit of $x$}, is denoted by 
\[ \Orb_{ \scriptscriptstyle{X \,  \rtimes \, \mathcal{G}} } (x)=\Set{ y \in X | \begin{aligned}
& \exists \, (x,g) \in \left(X \rtimes \mathcal{G}\right)_{ \scriptscriptstyle{1} } \, \text{ such that } \\
& x = \mathsf{t}^{\rtimes }(x,g) \, \text{ and }\, 
 y = \mathsf{s}^{\rtimes }(x,g)=xg
\end{aligned} } = \LR{ xg \in X | \mathsf{t}(g)=\varsigma(x) }
:=[x] \,  \mathcal{G}.
\]

Let us denote by $\rep_{\Sscript{\cG}}(X)$ a \emph{representative set} of the orbit set $X/\cG$. For instance, if $\cG=(X\times G, X)$ is an  action groupoid as in Example \ref{exam:action}, then obviously the orbit set of this groupoid coincides with the classical set of orbits  $X/G$. Of course, the orbit set of an equivalence relation groupoid $(\cR, X)$, see Example \ref{exam:X},  is precisely the quotient set $X/\cR$.  

The left orbits sets for  left groupoids sets are analogously  defined by using the left translation groupoids. We will use the following notation for left orbits sets: Given a left $\cG$-set $(Z,\vartheta)$ its orbit set will be denoted by $\cG \backslash Z$ and the orbit of an element $z \in Z$ by $\Glcoset{z}$.

Let \((X, \varsigma)\) be a right \(\mathcal{G}\)-set with action \(\rho \colon X  {{}_{ \scriptscriptstyle{\varsigma} } {\times }}_{ \scriptscriptstyle{\mathsf{t}} }\, \mathcal{G}_{ \scriptscriptstyle{1} } \longrightarrow X\). 
The \emph{stabilizer} \(\Stabi_{ \scriptscriptstyle{\mathcal{G}}}\left({x}\right) \) of \(x\) in \(\mathcal{G}\) is the groupoid with arrows 
\[ 
\left( \Stabi_{ \scriptscriptstyle{\mathcal{G}}}\left({x}\right)  \right)_{ \scriptscriptstyle{1} }
= \Set{ g \in \mathcal{G}_{ \scriptscriptstyle{1} }  |  \varsigma\left({x}\right)= \mathsf{t}\left({g}\right)  \quad \text{and} \quad x g = x }
\]
and objects 
\[ \left( \Stabi_{ \scriptscriptstyle{\mathcal{G}}}\left({x}\right)  \right)_{ \scriptscriptstyle{0} } 
= \Set{ u \in \mathcal{G}_{ \scriptscriptstyle{0} }  |  \exists g \in \mathcal{G}_{ \scriptscriptstyle{1} }(\varsigma\left({x}\right), u) : xg=x   } \subseteq \mathscr{O}_{ \scriptscriptstyle{\varsigma\left({x}\right)} } .
\]
Note that \(x \iota_{ \scriptscriptstyle{\varsigma\left({x}\right)} }= x\) so \(\varsigma\left({x}\right) \in \left( \Stabi_{ \scriptscriptstyle{\mathcal{G}}}\left({x}\right)  \right)_{ \scriptscriptstyle{0} } \).
Besides that, using the first condition of the right \(\mathcal{G}\)-set, if \(\mathsf{t}\left({g}\right)= \varsigma\left({x}\right)\) and $g \in \left( \Stabi_{ \scriptscriptstyle{\mathcal{G}}}\left({x}\right)  \right)_{ \scriptscriptstyle{1} }$, then \(\mathsf{s}\left({g}\right)= \varsigma\left({xg}\right)=\varsigma\left({x}\right)\).
Therefore
\[ \left( \Stabi_{ \scriptscriptstyle{\mathcal{G}}}\left({x}\right)  \right)_{ \scriptscriptstyle{0} } 
= \Set{\varsigma\left({x}\right)},
\qquad  \Stabi_{\Sscript{\cG}}(x)^{\Sscript{\varsigma(x)}}
 \le \mathcal{G}^{ \scriptscriptstyle{\varsigma\left({x}\right)} }
\]
and as a groupoid with only one object $\varsigma(x)$, the set of arrow is:
\[ 
\left({\Stabi_{ \scriptscriptstyle{\mathcal{G}}}\left({x}\right)  }\right)_{ \scriptscriptstyle{1} }= \LR{ g \in \mathcal{G}_{ \scriptscriptstyle{1} }  | \text{ \(\mathsf{s}\left({g}\right)= \mathsf{t}\left({g}\right) = \varsigma\left({x g}\right) \) and \(x g= x \) }}.
\]
Henceforth, the stabilizer of an element $x \in X$ is the subgroup of the isotropy group $\cG^{\Sscript{\varsigma(x)}}$ consisting of those loops $g$ which satisfy $x g =x$. The following lemma is then an immediate consequence of this observation:

\begin{lemma}\label{lema:Stab}
Let $(X, \varsigma)$ be a right $\cG$-set and consider its associated morphism of groupoids $\sigmaup: X\rJoin \cG \to \cG$, given by the pair of maps $\sigmaup=(\varsigma, pr_{\Sscript{2}})$. Then, for any $x \in X$, the stabilizer $\Stab{x}{G}$ is the image by $\sigmaup$ of the isotropy group $(X\rtimes \cG)^{\Sscript{x}}$.
\end{lemma}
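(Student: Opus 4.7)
The plan is to carry out a direct computation by unwinding the definitions involved, since both sides of the claimed equality are already written down explicitly in the preceding paragraphs; there is really no substantive obstacle here, only bookkeeping.

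First, I would recall that the morphism $\sigmaup = (\varsigma, pr_{\Sscript{2}}) \colon X \rJoin \cG \to \cG$ acts on arrows by the second projection, sending a pair $(x,g)$ of $(X \rtimes \cG)_{\Sscript{1}}$ to $g \in \Ga$. Thus $\sigmaup\big((X \rtimes \cG)^{\Sscript{x}}\big)$, as a subset of $\Ga$, is precisely the set
\[
\bigl\{ g \in \Ga \,\big|\, (x,g) \in (X \rtimes \cG)_{\Sscript{1}} \text{ and } \Sf{t}^{\rtimes}(x,g) = \Sf{s}^{\rtimes}(x,g) = x \bigr\}.
\]
Using the definitions $\Sf{t}^{\rtimes}(x,g) = x$ (automatic) and $\Sf{s}^{\rtimes}(x,g) = xg$, together with the constraint $\varsigma(x) = \Sf{t}(g)$ built into $(X \rtimes \cG)_{\Sscript{1}}$, this rewrites as the set of all $g \in \Ga$ with $\Sf{t}(g) = \varsigma(x)$ and $xg = x$.

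Next I would observe that from axiom (1) of Definition~\ref{def:Gset}, the identity $xg = x$ forces $\Sf{s}(g) = \varsigma(xg) = \varsigma(x) = \Sf{t}(g)$, so every such $g$ is automatically a loop at $\varsigma(x)$, i.e.\ an element of $\cG^{\Sscript{\varsigma(x)}}$. Comparing now with the explicit description
\[
\bigl(\Stab{x}{G}\bigr)_{\Sscript{1}} = \bigl\{ g \in \Ga \,\big|\, \Sf{s}(g) = \Sf{t}(g) = \varsigma(x) \text{ and } xg = x \bigr\}
\]
given just before the Lemma, I conclude that the two sets of arrows coincide. On the level of objects, $(X \rtimes \cG)^{\Sscript{x}}$ has the single object $x$, whose image under the object-component $\varsigma$ of $\sigmaup$ is $\varsigma(x)$, which is also the unique object of $\Stab{x}{G}$. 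Hence $\sigmaup$ restricts to an isomorphism $(X \rtimes \cG)^{\Sscript{x}} \twoheadrightarrow \Stab{x}{G}$ of (one-object) subgroupoids of $\cG$, which is the asserted identification.
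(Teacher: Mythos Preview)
Your proof is correct and follows exactly the approach the paper takes: the paper does not give a separate proof at all, stating instead that the lemma is ``an immediate consequence of this observation,'' namely the preceding identification of $\Stab{x}{G}$ with the subgroup of loops $g \in \cG^{\Sscript{\varsigma(x)}}$ satisfying $xg = x$. You have simply written out the unwinding of definitions that makes this immediacy explicit.
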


The \emph{left stabilizer} for elements of left groupoids sets are similarly defined and enjoy analogue properties as in Lemma \ref{lema:Stab}.

\section{Groupoid-bisets, translation groupoids, orbits and cosets}\label{sec:II}

In this section, we recall from \cite{ElKaoutit:2017} (with sufficient details)  the notions of groupoid-biset, two sided translation groupoid, coset by a subgroupoid and tensor product of bisets. After that, we will discuss the decomposition of a set, with a groupoid acting over it, into disjoint orbits. Moreover, we will prove the bijective correspondence between groupoid-bisets and left sets over the product of the involved groupoids.

\subsection{Bisets,  two sided translation groupoids and (left) cosets}\label{ssec:biset}

Let $\cG$ and $\cH$ be two groupoids and $(X, \vartheta,\varsigma)$ a triple consisting of a set $X$ and two maps $\varsigma : X \to \Go$, $\vartheta: X \to \Ho$.  The following definitions are abstract formulations of those given in \cite{Jelenc:2013, Moedijk/Mrcun:2005} for topological  and Lie groupoids.
In this regard, see also \cite{ElKaoutit:2017}.

\begin{definition}\label{def:biset}
The triple $(X,\vartheta,\varsigma)$ is said to be an \emph{$(\cH,\cG)$-biset} if there is a left $\cH$-action $\lambda: \Ha \, \due \times {\Sscript{\mathsf{s}}} {\, \Sscript{\vartheta}} \,  X \to X$ and right $\cG$-action $\rho: X\, \due \times {\Sscript{\varsigma}} {\, \Sscript{\mathsf{t}}} \,  \Ga  \to X$ such that
\begin{enumerate} 
\item For any $x \in X$, $h \in \cH_{\Sscript{1}}$, $g \in \cG_{\Sscript{1}}$ with $\vartheta(x)=\mathsf{s}(h)$ and $\varsigma(x)=\mathsf{t}(g)$, we have
$$ \vartheta(xg) =\vartheta(x)\; \text{ and }\; \varsigma(hx)=\varsigma(x).$$
\item For any $ x \in X$, $h \in \cH_{\Sscript{1}}$ and $ g \in \cG_{\Sscript{1}}$ with $\varsigma(x)=\mathsf{t}(g)$, $\vartheta(x)=\mathsf{s}(h)$, we have 
$h(xg)\,=\, (hx)g$.
\end{enumerate}
The triple $(X, \vartheta,\varsigma)$ is referred to as a \emph{groupoid-biset}, whenever the two groupoids $\cH$ and $\cG$ are clear.
\end{definition}

For simplicity the actions maps of a groupoid-bisets are omitted in the notions. 
\emph{The two sided translation groupoid} associated to a given $(\cH, \cG)$-biset $(X, \vartheta, \varsigma)$ is defined to be the groupoid $\cH \lJoin X \rJoin \cG$ whose set of objects is $X$ and set of arrows is 
$$
\cH_{\Sscript{1}}\, \due \times {\Sscript{{s}}} {\, \Sscript{\vartheta}} \, X \, \due \times {\Sscript{\varsigma}}{\, \Sscript{{s}}} \, \cG_{\Sscript{1}}\,=\, \Big\{ (h,x,g) \, \in \,  \cH_{\Sscript{1}}\times X \times \cG_{\Sscript{1}}| \,\, \Sf{s}(h)= \vartheta(x),\, \Sf{s}(g)=\varsigma(x) \Big\}.
$$
The structure maps are:  
$$
\Sf{s}(h,x,g)=x,\quad \Sf{t}(h,x,g)=hxg^{-1}\;\;  \text{ and }\; \iota_{\Sscript{x}}=(\iota_{\Sscript{\vartheta(x)}}, x,  \iota_{\Sscript{\varsigma(x)}}). 
$$
The multiplication and the inverse are given by: 
$$
(h,x,g) (h',x',g')\,=\,(hh',x',gg'),\quad  (h,x,g)^{-1}=(h^{-1}, hxg^{-1}, g^{-1}).
$$

\begin{example}
Given two groups \(G\) and \(H\) and a group-biset \(U\), the category \(\left\langle U \right\rangle\) defined by Bouc in \cite[Notation~\(2.1\)]{BoucBisetsCatTensProd} is the translation groupoid of the \(\left(\mathcal{H}, \mathcal{G}\right)\)-biset \(V\), where \(\mathcal{H}\), respectively \(\mathcal{G}\), is the groupoid with only one object and isotropy group \(H\), respectively \(G\), and \(V\) is exactly \(U\) considered as a groupoid-biset.
\end{example}

The \emph{orbit space of the two translation groupoid}  is the quotient set $X / (\cH,\cG)$ defined using the equivalence relation $x \sim x'$, if and only if, there exist $h \in \Ha$ and $g \in \Ga$ with $\mathsf{s}(h)=\vartheta(x)$ and $\mathsf{t}(g)=\varsigma(x')$,  such that $hx\,=\, x'g$.
We will also employ  the notation $\cH \backslash X / \cG := X/(\cH,\, \cG)$ and denote by $\rep_{\Sscript{(\cH, \, \cG)}}(X)$ one of its \emph{representative sets}.

\begin{example}\label{exam:bisets}
Let $\phiup: \cH \to \cG$ be a morphism of groupoids. Consider, as in Example \ref{exam: HG},  the associated triples $(\Ho\, \due \times {\Sscript{\phiup_0}} {\, \Sscript{\Sf{t}}} \,  \Ga, \varsigma, pr_{\Sscript{1}})$ and  
$(\Ga\, \due \times {\Sscript{\Sf{s}}} {\, \Sscript{\phiup_0}} \,  \Ho, pr_{\Sscript{2}}, \vartheta)$ with actions defined as in eqautions \eqref{Eq:HG} and \eqref{Eq:GH}.  Then these triples are an $(\cH, \cG)$-biset and a $(\cG,\cH)$-biset, respectively. 
\end{example}

\begin{proposition}\label{pBisetRight} 
Let \((X, \vartheta, \varsigma)\) be an \(\left(\mathcal{H}, \mathcal{G}\right)\)-biset with actions $\lambda: \Ha \, \due \times {\Sscript{\mathsf{s}}} {\, \Sscript{\vartheta}} \,  X \to X$ and  $\rho: X\, \due \times {\Sscript{\varsigma}} {\, \Sscript{\mathsf{t}}} \,  \Ga  \to X$. 
Then \(\mathcal{H} \backslash X\) is a right \(\mathcal{G}\)-set with structure map and action as follows:
\[ 
\begin{aligned}{\widehat{\varsigma}}  \colon & { \mathcal{H} \backslash X} \longrightarrow {\mathcal{G}_{ \scriptscriptstyle{0} } } \\ & {\mathcal{H}[x]}  \longrightarrow {\widehat{\varsigma}\left({ \Hlcoset{x} }\right)= \varsigma\left({x}\right) }\end{aligned} 
\qquad \text{and} \qquad
\begin{aligned}{\widehat{\rho}}  \colon & {\left( \mathcal{H} \backslash X \right)  {{}_{ \scriptscriptstyle{\widehat{\varsigma}} } {\times}}_{ \scriptscriptstyle{\mathsf{t} } }\, \mathcal{G}_{ \scriptscriptstyle{1} } } \longrightarrow {\mathcal{H} \backslash X} \\ & { \left( \Hlcoset{x}, g\right)}  \longrightarrow {\widehat{\rho} \left({\Hlcoset{x}, g}\right)= \Hlcoset{xg} = \Hlcoset{\rho\left({x, g}\right)}.}\end{aligned} 
\]
\end{proposition}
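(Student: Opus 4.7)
The plan is to prove the proposition in three stages: first show that the putative structure map $\widehat{\varsigma}$ is well-defined on the left orbit set $\mathcal{H}\backslash X$, then show that the putative action $\widehat{\rho}$ is well-defined on orbits, and finally verify the three axioms of Definition~\ref{def:Gset}. No deep ingredient beyond the biset axioms is needed; the content is purely a well-definedness check followed by routine transport of the right $\mathcal{G}$-set structure on $X$ to the quotient.

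For the first stage, suppose $\mathcal{H}[x]=\mathcal{H}[x']$, so that (by the left version of the definition of the orbit set in subsection~\ref{ssec:Orbits}) there exists $h\in\mathcal{H}_1$ with $\mathsf{s}(h)=\vartheta(x)$ and $x'=hx$. Condition (1) of Definition~\ref{def:biset} then gives $\varsigma(x')=\varsigma(hx)=\varsigma(x)$, so the formula $\widehat{\varsigma}(\mathcal{H}[x])=\varsigma(x)$ does not depend on the chosen representative. For the second stage, keep $x'=hx$ and take $g\in\mathcal{G}_1$ with $\mathsf{t}(g)=\widehat{\varsigma}(\mathcal{H}[x])=\varsigma(x)$. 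All the composites involved are defined, since $\varsigma(hx)=\varsigma(x)=\mathsf{t}(g)$ and $\vartheta(xg)=\vartheta(x)=\mathsf{s}(h)$ by condition (1) of Definition~\ref{def:biset}. The mixed associativity in condition (2) of Definition~\ref{def:biset} then yields
\[
x'g=(hx)g=h(xg),
\]
whence $\mathcal{H}[x'g]=\mathcal{H}[xg]$, i.e.\ $\widehat{\rho}$ is independent of the representative.

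It remains to verify the three axioms of Definition~\ref{def:Gset} for $(\mathcal{H}\backslash X,\widehat{\varsigma})$ equipped with $\widehat{\rho}$. Axiom (1) reads $\widehat{\varsigma}(\widehat{\rho}(\mathcal{H}[x],g))=\mathsf{s}(g)$, which follows from $\widehat{\varsigma}(\mathcal{H}[xg])=\varsigma(xg)=\mathsf{s}(g)$, the last equality being axiom (1) for the right $\mathcal{G}$-action on $X$. Axiom (2) is
\[
\widehat{\rho}\bigl(\mathcal{H}[x],\iota_{\varsigma(x)}\bigr)=\mathcal{H}\bigl[x\,\iota_{\varsigma(x)}\bigr]=\mathcal{H}[x],
\]
directly from axiom (2) for the right action on $X$. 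Axiom (3), associativity, is
\[
\widehat{\rho}\bigl(\widehat{\rho}(\mathcal{H}[x],g),h\bigr)=\mathcal{H}[(xg)h]=\mathcal{H}[x(gh)]=\widehat{\rho}\bigl(\mathcal{H}[x],gh\bigr),
\]
where the middle equality is axiom (3) for the right action on $X$.

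The main (and indeed only) obstacle is the bookkeeping in the well-definedness checks: one must keep track of the source/target compatibilities to guarantee that every product appearing above is defined in the partial composition of $\mathcal{G}$ and in the actions. As noted in the second stage, these compatibilities are immediate consequences of Definition~\ref{def:biset}(1), so no subtlety arises and the proposition follows.
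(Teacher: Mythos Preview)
Your proof is correct and follows essentially the same approach as the paper: first check that $\widehat{\varsigma}$ and $\widehat{\rho}$ are well defined on orbits using the biset axioms, then verify the right $\mathcal{G}$-set axioms. The only difference is that you spell out the verification of axioms (1)--(3) of Definition~\ref{def:Gset} explicitly, whereas the paper leaves this routine check to the reader.
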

\begin{proof}
Let be \(x_1, x_2 \in X\) such that \(\Hlcoset{x_{1}}=  \Hlcoset{x_{2}}\).
Then by definition of orbit there is \(h \in \mathcal{H}_{ \scriptscriptstyle{1} }\) such that \(x_1 = h x_2\) and \(\vartheta\left({x_2}\right)= \mathsf{s}\left({h}\right)\).
One of the biset conditions says that  \(\varsigma\left({x_1}\right) = \varsigma\left({h x_2}\right) = \varsigma\left({x_2}\right)\). This shows that  \(\widehat{\rho}\) is well defined.
Now let be \(g \in \mathcal{G}_{ \scriptscriptstyle{1} }\), \(x_1, x_2 \in X\) such that \(  \Hlcoset{x_{1}}= \Hlcoset{x_{2}} \), \(\widehat{\varsigma}\left(\Hlcoset{x_{1}} \right) = \mathsf{t}\left({g}\right)\) and \(\widehat{\varsigma}\left( \Hlcoset{x_{2}}\right) = \mathsf{t}\left({g}\right)\).
We have
\[ \varsigma\left({x_1 }\right)= \widehat{\varsigma}\left( \Hlcoset{x_{1}} \right) 
= \mathsf{t}\left({g}\right)= \widehat{\varsigma}\left( \Hlcoset{x_{2}}\right)= \varsigma\left({x_2}\right)
\]
and
\[ \rho\left({x_1, g}\right)= x_1 g = \left({ h x_2}\right) g
= h \left({x_2 g}\right) = h \rho \left({x_2, g}\right) 
\]
so \(\Hlcoset{x_{1}g} = \Hlcoset{x_{2}g}\) which shows that \(\widehat{\rho}\) is well defined.

Now we only have to check the axioms of right \(\mathcal{G}\)-set but this is easy and is left to the reader.
As a consequence, we have proved that \(\mathcal{H} \backslash X\) is a right \(\mathcal{G}\)-set as stated.
\end{proof}

The left version of Proposition \ref{pBisetRight} also holds true. Precisely, given  an \(\left({\mathcal{H}, \mathcal{G}}\right)\)-biset  \((X, \varsigma, \vartheta)\);
since \(X\) is obviously a \(\left({\Gop, \Hop}\right)\)-biset, applying  Proposition \ref{pBisetRight},  we obtain that \(\Gop \backslash X\) is right \(\Hop\)-set, that is, \(X / \mathcal{G}\) becomes a left \(\mathcal{H}\)-set.

Next we deal with the \emph{left and right cosets} attached to a morphism of groupoids.  
So let us assume that a morphism of groupoids $\upphi : \cH \to \cG$ is given and denote by  \({}^{\Sscript{\upphi}}\cH(\cG)=\mathcal{H}_{ \scriptscriptstyle{0} }  {{}_{ \scriptscriptstyle{\upphi_0} } {\times}}_{ \scriptscriptstyle{\Sf{t}} }\, \mathcal{G}_{ \scriptscriptstyle{1} }\) the underlying set of the  $(\cH, \cG)$-biset of Example \ref{exam:bisets}. Then  the left translation groupoid is given by 
\[ \mathcal{H} \ltimes  { ^{ \scriptscriptstyle{\upphi} } \mathcal{H}\left(\mathcal{G}\right)} = \mathcal{H} \ltimes \left( \mathcal{H}_{ \scriptscriptstyle{0} }  {{}_{ \scriptscriptstyle{\upphi_0} } {\times}}_{ \scriptscriptstyle{\mathsf{t}} }\, \mathcal{G}_1 \right) 
= \left( \mathcal{H}_{ \scriptscriptstyle{1} }  {{}_{ \scriptscriptstyle{\mathsf{s}} } {\times }}_{ \scriptscriptstyle{\pr_1} }\, \left( \mathcal{H}_{ \scriptscriptstyle{0} }  {{}_{ \scriptscriptstyle{\upphi_0} } {\times}}_{ \scriptscriptstyle{\mathsf{t}} }\, \mathcal{G}_1  \right) , \mathcal{H}_{ \scriptscriptstyle{0} }  {{}_{ \scriptscriptstyle{\upphi_0} } {\times}}_{ \scriptscriptstyle{\mathsf{t}} }\, \mathcal{G}_{ \scriptscriptstyle{1} }  \right)
\]
where the source \(s^{\ltimes}\) is the action $\lhaction$ described in equation \eqref{Eq:HG} and the target \(t^{\ltimes}\)  is the second projection on \(X\). The multiplication of two objects 
\( (h_1, a_1, g_1), (h_2, a_2, g_2) \in \mathcal{H}_{ \scriptscriptstyle{1} }  {{}_{ \scriptscriptstyle{\mathsf{s}} } {\times}}_{ \scriptscriptstyle{\pr_1} }\, \mathcal{H}_{ \scriptscriptstyle{0} }  {{}_{ \scriptscriptstyle{\upphi_0} } {\times}}_{ \scriptscriptstyle{\mathsf{t}} }\, \mathcal{G}_{ \scriptscriptstyle{1} } \)
such that \(\mathsf{s}^{\ltimes}(h_1, a_1, g_1)= \mathsf{t}^{\ltimes}(h_2, a_2, g_2)\), is given as follows: First we have 
\[ \left(t(h_1), \upphi_{ \scriptscriptstyle{1} }(h_1) g_1 \right)= h_1 \lhaction  (a_1, g_1)
= \mathsf{s}^{\ltimes}(h_1, a_1, g_1)= \mathsf{t}^{\ltimes}(h_2, a_2, g_2) = (a_2, g_2)
\]
and \(s(h_2)=\pr_{ \scriptscriptstyle{1} }(a_2,g_2)= a_2= \mathsf{t}(h_1)\) so we can write \(h_2 h_1\).
Second we have \(\mathsf{t}^{\ltimes}(h_1,a_1, g_1)=(a_1,g_1)\) and \(\mathsf{s}^{\ltimes}(h_2, a_2, g_2) = h_2 \lhaction (a_2, g_2)=(\mathsf{t}(h_2), \upphi_{ \scriptscriptstyle{1} }(h_2)a_2)\), and so 
\[ \begin{gathered}
\xymatrix{(a_1,g_1)  &&& \Big( h_1 \lhaction (a_1, g_1) \Big) = \left(\mathsf{t}(h_1), \upphi_{ \scriptscriptstyle{1} }(h_1)g_1 \right) \ar[lll]_{(h_1,a_1,g_1)} &&& \Big(  h_2  \lhaction (a_2, g_2) \Big)  \ar[lll]_{(h_2, a_2, g_2)} } \\
= \left(\mathsf{t}(h_2), \upphi_{ \scriptscriptstyle{1} }(h_2) g_2 \right) 
=\left( \mathsf{t}(h_2 h_1), \upphi_{ \scriptscriptstyle{1} }(h_2) \upphi_{ \scriptscriptstyle{1} }(h_1) g_1 \right)
= \left( \mathsf{t}(h_2 h_1) , \upphi_{ \scriptscriptstyle{1} }(h_2 h_1) g_1 \right)
= \Big( (h_2 h_1)  \lhaction (a_1, g_1) \Big) .
\end{gathered}
\]
Thus,
\[ (h_1, a_1, g_1)(h_2, a_2, g_2)=(h_2 h_1, a_1, g_1).
\]

\begin{definition}
Given a morphism of groupoids \(\upphi \colon \mathcal{H} \longrightarrow \mathcal{G}\), we define
\[ \left( \mathcal{G} / \mathcal{H} \right)^{\Sscript{\mathsf{R}}}_{ \scriptscriptstyle{\upphi} } 
:= \pi_{ \scriptscriptstyle{0} } \Big( \mathcal{H} \ltimes { ^{ \scriptscriptstyle{\upphi} } \mathcal{H}  } \left(\mathcal{G}\right) \Big)
\]
the orbit set $\cH \backslash { ^{ \scriptscriptstyle{\upphi} } \mathcal{H}  } \left(\mathcal{G}\right)$  and, for each \((a,g) \in \mathcal{H}_{ \scriptscriptstyle{0} }  {{}_{ \scriptscriptstyle{\upphi_{ \scriptscriptstyle{0} }} } {\times}}_{ \scriptscriptstyle{\mathsf{t}} }\, \mathcal{G}_{ \scriptscriptstyle{1} }\), we set 
\[  {^{ \scriptscriptstyle{\upphi} } \mathcal{H}[(a,g)]} 
= \Set{ \Big( h \lhaction (a,g) \Big) \in {^{ \scriptscriptstyle{\upphi} } \mathcal{H} }\left(\mathcal{G}\right) | h \in \mathcal{H}_{ \scriptscriptstyle{1} }, \, \mathsf{s}(h)=a }.
\]
\end{definition}

If \(\mathcal{H}\) is a subgroupoid of \(\mathcal{G}\), that is,  \(\upphi:=\tauup \colon \mathcal{H} \hookrightarrow \mathcal{G}\) is the inclusion functor,  we use the notation
\[ \left( \mathcal{G} /\mathcal{H} \right)^{\Sscript{\mathsf{R}}} 
= \mathcal{H} \backslash \left({\mathcal{H}_{ \scriptscriptstyle{0} }  {{}_{ \, \scriptscriptstyle{\tauup_0} } {\times}}_{ \scriptscriptstyle{ \mathsf{t}} }\,  \mathcal{G}_{ \scriptscriptstyle{1} } }\right) 
=  \mathcal{H}\left({\mathcal{G}}\right) = {^{ \scriptscriptstyle{\tauup} }\mathcal{H}} \left({\mathcal{G}}\right) 
\]
and
\begin{equation}\label{Eq:Hag}
\Hlcoset{(a,g)}={^{ \scriptscriptstyle{\tauup} }\mathcal{H}} [ \left({a,g}\right) ]
= \LR{ \Big( h \lhaction (a,g) \Big) \in \mathcal{H}\left({\mathcal{G}}\right) | \; h \in \mathcal{H}_{ \scriptscriptstyle{1} }, \, \mathsf{s}(h)=a },
\end{equation}
where \((a,g) \in \mathcal{H}_{ \scriptscriptstyle{0} }  {{}_{\,  \scriptscriptstyle{\tauup_{ \scriptscriptstyle{0} }} } {\times}}_{ \scriptscriptstyle{\mathsf{t}} }\, \mathcal{G}_{ \scriptscriptstyle{1} }\).
On the other hand, for each \((h,a,g) \in \mathcal{H}_{ \scriptscriptstyle{1} }  {{}_{ \scriptscriptstyle{\mathsf{s}} } {\times }}_{ \scriptscriptstyle{\pr_1} }\,  \mathcal{H}_{ \scriptscriptstyle{0} }  {{}_{ \, \scriptscriptstyle{\tauup_0} } {\times}}_{ \scriptscriptstyle{\mathsf{t}} }\, \mathcal{G}_1\) we have
\[ \Big( h \lhaction (a,g) \Big) = \left(t\left({h}\right), \upphi_{ \scriptscriptstyle{1} }\left({h}\right)g\right)
= \left(t\left({h}\right), hg\right)
\qquad {\text{\normalsize  and}} \qquad
 \mathsf{s}\left({h}\right)= a= \upphi_{ \scriptscriptstyle{0} }\left({a}\right)= \mathsf{t}\left({g}\right) 
\]
so, for each \((a,g) \in \mathcal{H}_{ \scriptscriptstyle{0} }  {{}_{ \, \scriptscriptstyle{\tauup_{ \scriptscriptstyle{0} }} } {\times}}_{ \scriptscriptstyle{\mathsf{t}} }\, \mathcal{G}_{ \scriptscriptstyle{1} }\), we have
\[ \Hlcoset{(a,g)}= \LR{ \left( \mathsf{t}\left({h}\right), hg\right) \in \mathcal{H}\left({\mathcal{G}}\right)  |  h \in \mathcal{H}_{ \scriptscriptstyle{1} }, \, \mathsf{s}\left({h}\right)= \mathsf{t}\left({g}\right)  }.
\]

\begin{definition}\label{def:coset}
Let $\cH$ be a subgroupoid of $\cG$ via the injection $\tau:\cH \hookrightarrow \cG$. The \emph{right coset of $\cG$ by $\cH$} is defined as:
\begin{equation}\label{Eq:R} 
\left({\mathcal{G}/\mathcal{H}}\right)^{ \scriptscriptstyle{\mathsf{R}} } = \Set{ \Hlcoset{(a,g)}  | (a,g) \in \mathcal{H}_{ \scriptscriptstyle{0} }  {{}_{\, \scriptscriptstyle{\tauup_{ \scriptscriptstyle{0} }} } {\times}}_{ \scriptscriptstyle{\mathsf{t}} }\, \mathcal{G}_{ \scriptscriptstyle{1} } ,  }
\end{equation}
where each class $\cH[(a,g)]$ is as in equation \eqref{Eq:Hag}.
\end{definition}

Keeping the notation of the previous definition we can state:

\begin{lemma}
\label{lEqCosets}
Given \((a_1, g_1), (a_2, g_2) \in \mathcal{H}_{ \scriptscriptstyle{0} }  {{}_{ \scriptscriptstyle{\tauup_{ \scriptscriptstyle{0} }} } {\times}}_{ \scriptscriptstyle{\mathsf{t}} }\, \mathcal{G}_{ \scriptscriptstyle{1} }\), we have \(\Hlcoset{(a_1, g_1)} = \Hlcoset{(a_2, g_2)}\) if and only if there is \(h \in {\mathcal{H}}\left(a_2,a_1\right)\) such that \(h=g_1 g_2^{-1}\).
\end{lemma}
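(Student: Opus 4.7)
The lemma unpacks the definition of the left coset $\mathcal{H}[(a,g)]$ given in equation~\eqref{Eq:Hag}, combined with the fact that the cosets are the equivalence classes of the orbit relation of the left translation groupoid $\mathcal{H} \ltimes {}^{\tauup}\mathcal{H}(\mathcal{G})$. My plan is to treat the two implications independently, with each direction reducing to a direct computation using the explicit formula for the action $\lhaction$ described in equation~\eqref{Eq:HG}.

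For the forward direction, I would begin by noting that $(a,g)$ always belongs to its own coset $\mathcal{H}[(a,g)]$, via the identity arrow $h=\iota_a$, since $\iota_a \lhaction(a,g)=(\mathsf{t}(\iota_a),\iota_a g)=(a,g)$. Consequently, if $\mathcal{H}[(a_1,g_1)]=\mathcal{H}[(a_2,g_2)]$ then $(a_1,g_1)\in\mathcal{H}[(a_2,g_2)]$, which by the explicit description of the coset means there exists $h\in\mathcal{H}_1$ with $\mathsf{s}(h)=a_2$ and $h\lhaction(a_2,g_2)=(a_1,g_1)$. Comparing coordinates gives $\mathsf{t}(h)=a_1$, so $h\in\mathcal{H}(a_2,a_1)$, together with $hg_2=g_1$; right-multiplying by $g_2^{-1}$ in $\mathcal{G}$ yields $h=g_1 g_2^{-1}$, and one verifies along the way that this composition is well defined because necessarily $\mathsf{s}(g_1)=\mathsf{s}(hg_2)=\mathsf{s}(g_2)$.

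For the converse, given $h\in\mathcal{H}(a_2,a_1)$ with $h=g_1 g_2^{-1}$ (which in particular forces $\mathsf{s}(g_1)=\mathsf{s}(g_2)$), I would check directly by associativity that $hg_2=(g_1 g_2^{-1})g_2=g_1$, and combined with $\mathsf{t}(h)=a_1$ and $\mathsf{s}(h)=a_2$, this gives $h\lhaction(a_2,g_2)=(\mathsf{t}(h),hg_2)=(a_1,g_1)$. Hence $(a_1,g_1)\in\mathcal{H}[(a_2,g_2)]$, and since membership in the same coset is an equivalence relation, the two cosets coincide.

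There is no real obstacle here; the only minor points to be careful about are tracking the source/target conditions so that the composition $g_1g_2^{-1}$ and the action $h\lhaction(a_2,g_2)$ are well defined, and observing that the arrow $h$ obtained in the forward direction automatically lies in the subgroupoid $\mathcal{H}$, since the coset was defined using arrows of $\mathcal{H}$ in the first place.
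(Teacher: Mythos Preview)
Your proposal is correct and follows essentially the same approach as the paper's proof: both argue that equality of cosets is equivalent to $(a_1,g_1)\in\mathcal{H}[(a_2,g_2)]$, then unpack this membership via the explicit formula for the action to obtain $h\in\mathcal{H}_1$ with $\mathsf{s}(h)=a_2$, $\mathsf{t}(h)=a_1$, and $g_1=hg_2$. The only cosmetic difference is that the paper presents the argument as a single chain of biconditionals, whereas you treat the two implications separately.
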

\begin{proof}
We have \(\Hlcoset{(a_1, g_1)} = \Hlcoset{(a_2, g_2)}\) if and only if \((a_1, g_1) \in \Hlcoset{(a_2, g_2)}\), if and only if there is \(h \in \mathcal{H}_{ \scriptscriptstyle{1} }\) such that \(\mathsf{s}(h)=\mathsf{t}(g_2)\) and \((a_1, g_1)= (\mathsf{t}(h), hg_2)\), if and only if there is \(h \in \mathcal{H}_{ \scriptscriptstyle{1} }\) such that
\[ \left\lbrace  \begin{aligned}
& \mathsf{t}(g_1)=a_1=\mathsf{t}(h)   \\
& g_1 = h g_2,
\end{aligned}  \right.
\]
if and only if there is \(h \in \mathcal{H}_{ \scriptscriptstyle{1} }\) such that \(\mathsf{s}(h)=\mathsf{t}(g_2)=a_2\), \(\mathsf{t}(h)=\mathsf{t}(g_1)=a_1\), \(h= g_1 g_2^{-1}\), if and only if there is \(h \in {\mathcal{H}}\left(a_1,a_2\right)\) such that \(h=g_1 g_2^{-1}\).
\end{proof}

The \emph{left coset of $\cG$ by $\cH$} is defined using the $(\cG, \cH)$-biset $\cH(\cG)^{\Sscript{\tauup}}:= \Ga\, \due \times {\Sscript{\Sf{s}}} {\, \Sscript{\tauup_0}} \,  \Ho$  of Example \ref{exam:bisets}  with actions maps as in  equation \eqref{Eq:GH}. 
If \(\tauup \colon \mathcal{H} \longrightarrow \mathcal{G}\) is the inclusion functor, then we use the notation
\begin{equation}\label{Eq:L}
\left({\mathcal{G}/\mathcal{H}}\right)^{\Sscript{\mathsf{L}} } = \cH(\cG)^{\Sscript{\tauup}} / \mathcal{H} 
= \LR{ \Hrcoset{(g,u)}   |   \; \left({g,u}\right) \in \cH(\cG)^{\Sscript{\tauup}} }
\end{equation}
and, for each \(\left({g,u}\right) \in \cH(\cG)^{\Sscript{\tauup}}\),
\[ 
\Hrcoset{(g,u)}  = \LR{ \left({gh, \mathsf{s}\left({h}\right) }\right) \in \cH(\cG)^{\Sscript{\tauup}}:=\Ga\, \due \times {\Sscript{\Sf{s}}} {\, \Sscript{\tauup_0}} \,  \Ho |  \; h \in \mathcal{H}_{ \scriptscriptstyle{1} }  , \mathsf{s}(g)=\mathsf{t}(h)}.
\]
The following is an  analogue of Lemma \ref{lEqCosets} 
\begin{lemma}
\label{lEquLatLeft}
If \(\tauup \colon \mathcal{H} \longrightarrow \mathcal{G}\) is an inclusion functor, then for each \(\left({g_1, u_1}\right), \left({g_2, u_2}\right) \in  \mathcal{G}_{ \scriptscriptstyle{1} }  {{}_{ \scriptscriptstyle{\mathsf{s} } } {\times}}_{ \scriptscriptstyle{\tauup_0} }\, \mathcal{H}_{ \scriptscriptstyle{0} }\) we have \(  \Hrcoset{(g_1, u_1)} = \Hrcoset{(g_2, u_2)} \) if and only if \(g_2^{-1} g_1 \in \mathcal{H}_{ \scriptscriptstyle{1} }\).
\end{lemma}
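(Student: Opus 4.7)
The plan is to mirror the argument used for Lemma~\ref{lEqCosets}, working on the ``right'' side instead of the ``left'' one. I would first unwind the equality of cosets via the definition: since $(g_i,u_i) \in \mathcal{G}_1\,{}_{\mathsf{s}}\!\times_{\tauup_0}\!\mathcal{H}_0$ forces $u_i=\mathsf{s}(g_i)$, and since $(g_1,u_1)$ itself lies in $[(g_1,u_1)]\mathcal{H}$ (take $h=\iota_{u_1}$), the equality $\Hrcoset{(g_1,u_1)} = \Hrcoset{(g_2,u_2)}$ is equivalent to $(g_1,u_1) \in \Hrcoset{(g_2,u_2)}$.

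Next I would spell out this membership: it means there exists $h \in \mathcal{H}_1$ with $\mathsf{s}(g_2)=\mathsf{t}(h)$ such that
\[
(g_1,u_1) \;=\; (g_2 h,\, \mathsf{s}(h)).
\]
Comparing the first coordinate gives $g_1 = g_2 h$, which (together with $\mathsf{s}(g_2)=\mathsf{t}(h)$) forces $\mathsf{t}(g_1)=\mathsf{t}(g_2)$, so the composite $g_2^{-1}g_1$ is defined in $\mathcal{G}_1$ and equals $h$. In particular $g_2^{-1}g_1 \in \mathcal{H}_1$. Conversely, assuming $g_2^{-1}g_1 \in \mathcal{H}_1$, the very fact that this composite exists in $\mathcal{G}_1$ gives $\mathsf{t}(g_2)=\mathsf{t}(g_1)$, and setting $h := g_2^{-1}g_1$ one checks $\mathsf{t}(h)=\mathsf{s}(g_2)$, $g_2 h = g_1$, and $\mathsf{s}(h)=\mathsf{s}(g_1)=u_1$, so $(g_1,u_1)=(g_2h,\mathsf{s}(h)) \in \Hrcoset{(g_2,u_2)}$.

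I do not anticipate any real obstacle: the proof is essentially a translation of Lemma~\ref{lEqCosets} from the left-action setting (where cosets have the form $(\mathsf{t}(h),hg)$) to the right-action setting (where they have the form $(gh,\mathsf{s}(h))$), so the only care required is to keep track of the source/target conditions coming from composability in the fibre product $\mathcal{G}_1\,{}_{\mathsf{s}}\!\times_{\tauup_0}\!\mathcal{H}_0$ and to note that the auxiliary objects $u_1,u_2$ are redundant data (determined by $\mathsf{s}(g_1),\mathsf{s}(g_2)$) so they play no independent role in the equivalence.
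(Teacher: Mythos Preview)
Your proposal is correct and takes essentially the same approach as the paper, which simply states that the proof ``is similar to that of Lemma~\ref{lEqCosets}''. You have carefully spelled out exactly this mirror argument on the right side, keeping correct track of the source/target constraints.
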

\begin{proof}
Is similar to that of  Lemma \ref{lEqCosets}.
\end{proof}

As a corollary of Proposition \ref{pBisetRight}, we obtain:
\begin{corollary}
\label{cRightCosetsRightSet}
Let $\cH$ be a subgroupoid of $\cG$ via the injection $\tau:\cH \hookrightarrow \cG$. Then
\(\left({\mathcal{G}/\mathcal{H}}\right)^{\Sscript{\mathsf{R}}}\) becomes a right \(\mathcal{G}\)-set with structure map and action given as follows:
\begin{equation}\label{Eq:PalomaI}
\begin{aligned}{{\varsigma}  }  \colon & {\left({\mathcal{G}/\mathcal{H} }\right)^{\Sscript{\mathsf{R}}} } \longrightarrow {\mathcal{G}_{ \scriptscriptstyle{0} } } \\ &  \Hlcoset{(a, g)}  \longrightarrow {\mathsf{s}\left({g}\right) }\end{aligned} 
\qquad \text{and} \qquad
\begin{aligned}{{\rho} }  \colon & {\left({\mathcal{G}/\mathcal{H} }\right)^{\Sscript{\mathsf{R}} }  {{}_{ \scriptscriptstyle{{\varsigma} } } {\times}}_{ \scriptscriptstyle{\mathsf{t} } }\, \mathcal{G}_{ \scriptscriptstyle{1} } } \longrightarrow {\left({\mathcal{G}/\mathcal{H} }\right)^{\Sscript{\mathsf{R}} } } \\ & \big( \Hlcoset{(a,g_1)}, g_2\big)   \longrightarrow \Hlcoset{(a, g_1 g_2)}. 
\end{aligned} 
\end{equation}
\end{corollary}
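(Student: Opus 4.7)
The plan is to obtain this corollary as a direct specialization of Proposition \ref{pBisetRight} to the natural $(\cH,\cG)$-biset $^{\tauup}\cH(\cG)$ attached to the inclusion functor $\tauup\colon\cH\hookrightarrow\cG$, so that no fresh computation should be required.

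First I would unwind the definitions. By Example \ref{exam:bisets}, the set $\Ho\,{}_{\tauup_0}\!\!\times_{\Sf{t}}\Ga$, together with the maps $\vartheta=\pr_{1}$ and $\varsigma\colon(a,g)\mapsto \Sf{s}(g)$ and the actions of \eqref{Eq:HG}, namely
\[
(a,g_{1})\rgaction g_{2}=(a,g_{1}g_{2})\qquad\text{and}\qquad h\lhaction(a,g)=(\Sf{t}(h),hg),
\]
is an $(\cH,\cG)$-biset. On the other hand, by Definition \ref{def:coset} together with \eqref{Eq:R}, the right coset $(\cG/\cH)^{\Sscript{\mathsf{R}}}$ is, by construction, precisely the orbit set $\cH\backslash{}^{\tauup}\cH(\cG)$ of this biset under the left $\cH$-action just recalled.

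Applying Proposition \ref{pBisetRight} to this particular biset then yields, at once, that $\cH\backslash{}^{\tauup}\cH(\cG)$ carries a right $\cG$-set structure, with induced structure map sending $\Hlcoset{(a,g)}\mapsto\varsigma(a,g)=\Sf{s}(g)$ and induced action sending
\[
\bigl(\Hlcoset{(a,g_{1})},g_{2}\bigr)\;\longmapsto\;\Hlcoset{(a,g_{1})\rgaction g_{2}}=\Hlcoset{(a,g_{1}g_{2})}.
\]
These formulas coincide verbatim with those displayed in \eqref{Eq:PalomaI}, which closes the argument. There is essentially no obstacle here beyond a careful notational identification: one has to match the abstract symbols of Proposition \ref{pBisetRight} against those coming from the inclusion $\tauup$. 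The well-definedness of the action on classes, which is the only nontrivial point that might tempt one to redo the work by hand, can alternatively be checked using Lemma \ref{lEqCosets}, but this is already subsumed in the general statement of Proposition \ref{pBisetRight}.
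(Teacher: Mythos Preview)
Your proposal is correct and follows exactly the approach intended by the paper: the corollary is stated immediately after the sentence ``As a corollary of Proposition~\ref{pBisetRight}, we obtain:'', and you have simply spelled out that $(\cG/\cH)^{\Sscript{\mathsf{R}}}=\cH\backslash{}^{\tauup}\cH(\cG)$ is the left-orbit set of the $(\cH,\cG)$-biset of Example~\ref{exam:bisets}, so Proposition~\ref{pBisetRight} applies directly with the structure map and action specializing to \eqref{Eq:PalomaI}.
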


In a similar way  the left coset $(\cG/\cH)^{\Sscript{\Sf{L}}}$ becomes a left $\cG$-set with a structure and action maps given by: 

\begin{equation}\label{Eq:Paloma}
 \begin{aligned}{{\vartheta}  }  \colon & {\left({\mathcal{G}/\mathcal{H} }\right)^{\Sscript{\mathsf{L}}} } \longrightarrow {\mathcal{G}_{ \scriptscriptstyle{0} } } \\ &  \Hrcoset{(g, u)}  \longrightarrow {\mathsf{t}\left({g}\right) }\end{aligned} 
\qquad \text{and} \qquad
\begin{aligned}{{\lambda} }  \colon & {\mathcal{G}_{ \scriptscriptstyle{1} }  {{}_{  \scriptscriptstyle{\mathsf{s} } } {\times}}_{    \scriptscriptstyle{{\vartheta} }  }\, \left({\mathcal{G}/\mathcal{H} }\right)^{\Sscript{\mathsf{L}} }  } \longrightarrow {\left({\mathcal{G}/\mathcal{H} }\right)^{\Sscript{\mathsf{L}} } } \\ & \big(g_1, \Hrcoset{(g_2,u)}\big)   \longrightarrow \Hrcoset{(g_1g_2, u)}. \end{aligned} 
\end{equation}

The following crucial proposition characterizes, as in the classical case of groups,  the right cosets by the stabilizer subgroupoid.  
\begin{proposition}
\label{pIsomStabOrb}
Let \((X, \varsigma)\) be a right \(\mathcal{G}\)-set with  action map $\rho \colon X  {{}_{ \scriptscriptstyle{\varsigma} } {\times}}_{ \scriptscriptstyle{ \mathsf{t} } }\, \mathcal{G}_{ \scriptscriptstyle{1} } \longrightarrow X$. 
Given \(x \in X\), let us consider \(\mathcal{H}= \Stabi_{ \scriptscriptstyle{\mathcal{G}}}\left({x}\right) \) its stabilizer as a subgroupoid of $\cG$ (see Subsection \ref{ssec:Orbits}). 
Then  the following map 
$$
\xymatrix@R=0pt{  \varphi:  \;  {\left({\mathcal{G}/\mathcal{H} }\right)^{\Sscript{\mathsf{R}} } }  \ar@{->}[rr] & &  {[x] \mathcal{G} } \\ \Hlcoset{(a,g)}  \ar@{|->}[rr]  & &  xg  }
$$
establishes an  isomorphism of  right \(\mathcal{G}\)-sets.
Likewise, a similar statement is true for left cosets. That is, for a given left $\cG$-set $(Y, \vartheta)$ with action $\lambda$, we have, for every $y \in Y$, an isomorphism of left $\cG$-set 
$$
\xymatrix@R=0pt{  \psi:  \;  {\left({\mathcal{G}/\mathcal{H} }\right)^{\Sscript{\mathsf{L}} } }  \ar@{->}[rr] & &  { \mathcal{G}[y] } \\ \Hrcoset{(u,a)}  \ar@{|->}[rr]  & &  ay,  }
$$
where $\cH=\Stabi_{ \scriptscriptstyle{\mathcal{G}}}\left({y}\right)$ is the stabilizer of $y$.
\end{proposition}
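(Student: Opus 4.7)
The plan is to verify that $\varphi$ is well defined, bijective, and $\mathcal{G}$-equivariant, and then invoke Lemma \ref{lInvEquivaIsEquiva} to conclude. First I would observe that since $\mathcal{H}=\Stabi_{\mathcal{G}}(x)$ has a unique object $\varsigma(x)$, every element of ${}^{\tauup}\mathcal{H}(\mathcal{G})=\mathcal{H}_{0}\,{}_{\tauup_{0}}\!\times_{\mathsf{t}}\mathcal{G}_{1}$ is automatically of the form $(\varsigma(x),g)$ with $\mathsf{t}(g)=\varsigma(x)$; in particular the expression $xg$ makes sense and lies in the orbit $[x]\mathcal{G}$, so the target of $\varphi$ is correct.

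For well-definedness, suppose $\mathcal{H}[(a,g_{1})]=\mathcal{H}[(a,g_{2})]$. By Lemma \ref{lEqCosets}, there exists $h\in\mathcal{H}(a,a)=\mathcal{H}_{1}$ with $h=g_{1}g_{2}^{-1}$. Since $h$ belongs to the stabilizer we have $xh=x$, hence
\[
xg_{1}=x(hg_{2})=(xh)g_{2}=xg_{2},
\]
so $\varphi$ is a well-defined map. For injectivity, assume $xg_{1}=xg_{2}$ with $\mathsf{t}(g_{1})=\mathsf{t}(g_{2})=\varsigma(x)$. Then $g_{1}g_{2}^{-1}$ is a loop at $\varsigma(x)$ (its source is $\mathsf{t}(g_{2})=\varsigma(x)$ and its target is $\mathsf{t}(g_{1})=\varsigma(x)$), and
\[
x(g_{1}g_{2}^{-1})=(xg_{1})g_{2}^{-1}=(xg_{2})g_{2}^{-1}=x\iota_{\varsigma(x)}=x,
\]
so $g_{1}g_{2}^{-1}\in\mathcal{H}_{1}$; applying Lemma \ref{lEqCosets} in the opposite direction gives $\mathcal{H}[(a,g_{1})]=\mathcal{H}[(a,g_{2})]$. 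Surjectivity is clear: any element of $[x]\mathcal{G}$ has the form $xg$ with $\mathsf{t}(g)=\varsigma(x)$, and then $\mathcal{H}[(\varsigma(x),g)]\mapsto xg$.

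It remains to check $\mathcal{G}$-equivariance. Using the structure map and action of $(\mathcal{G}/\mathcal{H})^{\Sscript{\mathsf{R}}}$ recorded in equation \eqref{Eq:PalomaI}, we have $\widehat{\varsigma}(\mathcal{H}[(a,g)])=\mathsf{s}(g)$, while condition (1) of Definition \ref{def:Gset} gives $\varsigma(xg)=\mathsf{s}(g)$, so the structure maps agree under $\varphi$. For the actions, whenever $\mathsf{t}(g_{2})=\mathsf{s}(g_{1})$,
\[
\varphi\bigl(\mathcal{H}[(a,g_{1})]\cdot g_{2}\bigr)=\varphi(\mathcal{H}[(a,g_{1}g_{2})])=x(g_{1}g_{2})=(xg_{1})g_{2}=\varphi(\mathcal{H}[(a,g_{1})])\cdot g_{2},
\]
so the diagrams \eqref{Eq:Gequi} commute, exhibiting $\varphi$ as a $\mathcal{G}$-equivariant bijection onto the $\mathcal{G}$-invariant subset $[x]\mathcal{G}\subseteq X$ (invariance of which follows from Lemma \ref{lActionRestriction}). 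By Lemma \ref{lInvEquivaIsEquiva} the inverse is automatically equivariant, so $\varphi$ is an isomorphism of right $\mathcal{G}$-sets. The left-hand statement follows by the same argument applied to the opposite groupoid, using the $\mathcal{G}$-set structure on $(\mathcal{G}/\mathcal{H})^{\Sscript{\mathsf{L}}}$ recorded in equation \eqref{Eq:Paloma} and Lemma \ref{lEquLatLeft} in place of Lemma \ref{lEqCosets}. I expect no serious obstacle here; the only subtle point is the bookkeeping that ensures the element $g_{1}g_{2}^{-1}$ used in the injectivity argument really lands in the one-object stabilizer subgroupoid rather than merely in $\mathcal{G}_{1}$.
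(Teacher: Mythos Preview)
Your proof is correct and follows essentially the same approach as the paper's own argument: both use Lemma~\ref{lEqCosets} for well-definedness and injectivity, verify surjectivity by inspection, check $\mathcal{G}$-equivariance via the structure and action maps from equation~\eqref{Eq:PalomaI}, and appeal to Lemma~\ref{lInvEquivaIsEquiva} to conclude. The only cosmetic difference is that the paper explicitly writes out the two commuting diagrams for equivariance, while you summarize them in one line; the content is identical.
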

\begin{proof}
We only show the right side of the statement. Given \( \Hlcoset{(a,g)} \in \left({\mathcal{G}/\mathcal{H}}\right)^{\Sscript{\mathsf{R}} }\), we have \(a \in \mathcal{H}_{ \scriptscriptstyle{0} }= \Set{\varsigma\left({x}\right) }\) and, since \(\left({a,g}\right) \in \mathcal{H}_{ \scriptscriptstyle{0} }  {{}_{ \, \scriptscriptstyle{\tauup_0} } {\times }}_{ \scriptscriptstyle{\mathsf{t} } }\, \mathcal{G}_{ \scriptscriptstyle{1} }\), where \(\tauup \colon \mathcal{H} \longrightarrow \mathcal{G}\) is the inclusion functor, we have \(\mathsf{t}\left({g}\right)= a = \varsigma\left({x}\right)\) and we can write \(x g\).
Now consider \(\left({a_1, g_1}\right), \left({a_2, g_2}\right) \in \mathcal{H}_{ \scriptscriptstyle{0} }   {{}_{ \, \scriptscriptstyle{\tauup_0} } {\times}}_{ \scriptscriptstyle{\mathsf{t } } }\, \mathcal{G}_{ \scriptscriptstyle{1} }\) (that is, \(a_1= \mathsf{t}\left({g_1}\right)\) and \(a_2=\mathsf{t}\left({g_2}\right)\)) such that \(  \Hlcoset{(a_{1},g_{1})} =  \Hlcoset{(a_{2},g_{2})} \).
Then, by Lemma~\(\ref{lEqCosets}\), there exists \(h \in \mathcal{H}_{ \scriptscriptstyle{1} }\) such that \(\mathsf{s}\left({h}\right)= a_2\), \(\mathsf{t}\left({h}\right)=a_1\) and \(h=g_1 g_2^{-1}\).
Since \(\mathcal{H}= \Stabi_{ \scriptscriptstyle{\mathcal{G}}}\left({x}\right) \) we have \(a_1=a_2= \varsigma\left({x}\right)\) and \(x h= x\) so \(x = x h= x g_1 g_2^{-1}\), whence \(x g_2 = x g_1\).
This shows that \(\varphi\) is well defined.
Now let \(   \Hlcoset{(a_{1},g_{1})} , \Hlcoset{(a_{2},g_{2})}   \in \left({\mathcal{G}/\mathcal{H}}\right)^{\Sscript{\mathsf{R}}}\) such that \(\varphi\big( \Hlcoset{(a_{1},g_{1})}  \big)=  \varphi\big( \Hlcoset{(a_{2},g_{2})} \big)\). Then 
we have 
\[ 
x g_1 = \varphi\big( \Hlcoset{(a_{1},g_{1})}  \big)=  \varphi\big( \Hlcoset{(a_{2},g_{2})} \big)
= x g_2,
\]
so \(x g_1 g_2^{-1} = x\), which means that  \(g_1 g_2^{-1} \in \Stabi_{ \scriptscriptstyle{\mathcal{G}}}\left({x}\right) =\mathcal{H}\).
Therefore \( \Hlcoset{(a_{1},g_{1})} =  \Hlcoset{(a_{2},g_{2})} \) and \(\varphi\) is  then injective.
Now consider an element \(x g \in \Grcoset{x} \), by definition we have \( \varphi\left( \Hlcoset{(\varsigma(x), g)} \right)= x g\), so \(\varphi\) is  a surjective map. Therefore, $\varphi$ is bijective. 

By Corollary \ref{cRightCosetsRightSet} and Lemma \ref{lActionRestriction} it follows that \(\left({\mathcal{G}/\mathcal{H}}\right)^{\Sscript{\mathsf{R}}}\) and \( \Grcoset{x}\) are right \(\mathcal{G}\)-sets. We denote by $\varsigma_{\Sscript{x}}$ and $\rho_{\Sscript{x}}$ the structure and the action maps of $\Grcoset{x}$, respectively.
To prove that \(\varphi\) is a morphism of right \(\mathcal{G}\)-set we have to prove that the following two diagrams are commutative:
\[ \xymatrix{
\left({\mathcal{G}/\mathcal{H}}\right)^{\Sscript{\mathsf{R}}}  {{}_{ \scriptscriptstyle{\widehat{\varsigma} } } {\times}}_{ \scriptscriptstyle{\mathsf{t}} }\, \mathcal{G}_{ \scriptscriptstyle{1} }  \ar[rr]^{\widehat{\rho} } \ar[d]_{\varphi \, \times \,  \Id_{\mathcal{G}_{ \scriptscriptstyle{1} } } } & &  \left({\mathcal{G}/\mathcal{H} }\right)^{\Sscript{\mathsf{R}} }  \ar[d]^{\varphi}  \\
\left( \Grcoset{x} \right)  {{}_{ \scriptscriptstyle{\varsigma_{\Sscript{x}}} } {\times}}_{ \scriptscriptstyle{\mathsf{t}} }\, \mathcal{G}_{ \scriptscriptstyle{1} }   \ar[rr]^{\rho_{\Sscript{x}}} & &  \Grcoset{x}   
}
\qquad \text{and} \qquad
\xymatrix{
\left({\mathcal{G}/\mathcal{H} }\right)^{\Sscript{\mathsf{R}} }  \ar[d]_{\varphi}  \ar[rr]^{\widehat{\varsigma}} &  & \mathcal{G}_{ \scriptscriptstyle{0} }   \\
\Grcoset{x},   \ar[rru]_{\varsigma_{\Sscript{x}}}   && }
\]
where we used the notation $\widehat{\varsigma}$ and $\widehat{\rho}$ for the structural and the action maps given in \eqref{Eq:PalomaI}. 
Let us check the commutativity of the triangle. So take  \( \Hlcoset{(a,g)} \in \left({\mathcal{G}/\mathcal{H}}\right)^{\Sscript{\mathsf{R}} }\),  using the definition of right action,  we have 
\[ \varsigma_{\Sscript{x}} \varphi \left( \Hlcoset{(a,g)}  \right) 
= \varsigma_{\Sscript{x}} \left({x g}\right) = \varsigma\left({x g}\right)
= \mathsf{s}\left({g}\right) =  \widehat{\varsigma} \left(  \Hlcoset{(a,g)}  \right) 
\]
since \(\mathsf{t}\left({g}\right)= a = \varsigma\left({x}\right)\). As for the rectangle, take an arbitrary element
\(\Big(\Hlcoset{(a,g)}, g_1\Big) \in \left({\mathcal{G}/\mathcal{H}}\right)^{\Sscript{\mathsf{R}}}  {{}_{ \scriptscriptstyle{\widehat{\varsigma} } } {\times}}_{ \scriptscriptstyle{\mathsf{t}} }\, \mathcal{G}_{ \scriptscriptstyle{1} }\), we have
\[ \begin{gathered}
\rho_{\Sscript{x}} \left(\varphi \times \Id_{\mathcal{G}_{ \scriptscriptstyle{1} } } \right) \Big( \Hlcoset{(a,g)}, g_1 \Big)
= \rho_{\Sscript{x}} \left( \varphi\left( \Hlcoset{(a,g)} \right), g_1 \right) \\
= \rho_{\Sscript{x}} \left({x g, g_1}\right) = \rho\left({x g, g_1}\right) 
= \left({x g}\right) g_1 
= x \left({g g_1}\right) 
= \varphi\left( \Hlcoset{(a,gg_{1})}  \right) 
= \varphi \widehat{\rho}\big( \Hlcoset{(a,g)}  , g_1\big).
\end{gathered}
\]
Therefore, $\varphi$ is compatible with the action and by using Lemma \ref{lInvEquivaIsEquiva}, we conclude that \(\varphi\) is an isomorphism of right \(\mathcal{G}\)-set as desired. 
\end{proof}

\begin{corollary}\label{lema:LAquila}
Let $(X, \varsigma)$ be a right $\cG$-set. Then there is an isomorphism of right $\cG$-sets:
$$
X \, \cong \, \biguplus_{x\, \in \,   \rep_{\scriptscriptstyle{\mathcal{G}}}(X) } \left(\mathcal{G}/ \Stabi_{ \scriptscriptstyle{\mathcal{G}}}\left({x}\right) \right)^{\Sscript{\mathsf{R}} }.
$$
where the right hand side is the coproduct in the category of right $\cG$-sets and with \(\rep_{\scriptscriptstyle{\mathcal{G}}}(X)\) we indicate a set of representatives of the orbits of the right \(\mathcal{G}\)-set \(X\).
\end{corollary}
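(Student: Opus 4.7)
The plan is to decompose $X$ into its orbits, recognize each orbit as $\cG$-invariant (hence a right $\cG$-subset in the sense of Lemma \ref{lActionRestriction}), and then identify each orbit with a right coset via Proposition \ref{pIsomStabOrb}. Finally, I would check that the resulting bijection is actually a morphism (and then an isomorphism by Lemma \ref{lInvEquivaIsEquiva}) of right $\cG$-sets, using the universal property that the coproduct in the category of right $\cG$-sets is realized by the disjoint union with the componentwise structure and action maps.

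More concretely, first I would recall that the orbits $[x]\cG$ (for $x \in X$) form a partition of $X$, since they are the equivalence classes of the orbit-equivalence relation coming from the translation groupoid $X \rJoin \cG$ as in Subsection~\ref{ssec:Orbits}. Choosing a representative set $\rep_{\cG}(X)$, we therefore get a bijection
\begin{equation*}
X \;\cong\; \biguplus_{x \,\in\, \rep_{\cG}(X)} [x]\,\cG
\end{equation*}
at the level of sets. Next, each orbit $[x]\cG \subseteq X$ is closed under the $\cG$-action (by definition of orbit), so by Lemma \ref{lActionRestriction} it carries a right $\cG$-set structure induced from $X$ for which the inclusion $[x]\cG \hookrightarrow X$ is $\cG$-equivariant.

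Then I would apply Proposition \ref{pIsomStabOrb}, which provides, for each $x \in \rep_{\cG}(X)$, an isomorphism of right $\cG$-sets
\begin{equation*}
\varphi_x \colon \bigl(\mathcal{G}/\Stabi_{\Sscript{\cG}}(x)\bigr)^{\Sscript{\Sf{R}}} \xrightarrow{\;\;\cong\;\;} [x]\,\cG, \qquad \cH[(a,g)] \longmapsto xg.
\end{equation*}
Gathering these together, the disjoint union
\begin{equation*}
\varphi := \biguplus_{x \,\in\, \rep_{\cG}(X)} \varphi_x \colon \biguplus_{x \,\in\, \rep_{\cG}(X)} \bigl(\mathcal{G}/\Stabi_{\Sscript{\cG}}(x)\bigr)^{\Sscript{\Sf{R}}} \longrightarrow X
\end{equation*}
is a bijection. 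To conclude, I would verify that $\varphi$ is $\cG$-equivariant: its structure map sends a class $\cH[(a,g)]$ in the $x$-component to $\Sf{s}(g) = \varsigma(xg)$, which is the structure map of $X$ applied to $\varphi(\cH[(a,g)])$; and the compatibility with the actions is precisely the content of Proposition \ref{pIsomStabOrb} applied componentwise. The coproduct in the category of right $\cG$-sets is realized by this disjoint union with the componentwise structure and action maps, so $\varphi$ gives the claimed isomorphism.

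The only subtle point is the set-theoretic axiom of choice needed to fix $\rep_{\cG}(X)$, together with the verification that the disjoint union of $\cG$-equivariant isomorphisms on $\cG$-invariant subsets is again $\cG$-equivariant; both are routine, so the argument is essentially a bookkeeping reduction to Proposition \ref{pIsomStabOrb}.
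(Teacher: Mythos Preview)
Your proposal is correct and follows exactly the same approach as the paper's proof, which simply says ``Immediate from Proposition~\ref{pIsomStabOrb}, considering the fact that a right \(\mathcal{G}\)-set is a disjoint union of its orbits.'' You have spelled out the details that the paper leaves implicit, but the underlying argument is identical.
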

\begin{proof}
Immediate from Proposition~\(\ref{pIsomStabOrb}\), considering the fact that a right \(\mathcal{G}\)-set is a disjoint union of its orbits.
\end{proof}

\subsection{Groupoid-bisets versus (left) sets}\label{ssec:Bisets} 
In this subsection we give the complete proof of the fact that the category of groupoids $(\cH,\cG)$-bisets is isomorphic to the category of left groupoids $(\cH \times \cGop)$-sets  (equivalently right $(\cHop \times \cG)$-sets). Here the structure groupoid of the (cartesian) product of two groupoids is the one given by the product of the underlying categories as described in Example \ref{exam:trivial}. 

\begin{proposition}
\label{pBisetsLeftSets}
Given a set \(X\) and two groupoids \(\mathcal{H}\) and \(\mathcal{G}\). Then there is a bijective correspondence between structures of \(\left({\mathcal{H},\mathcal{G}}\right)\)-bisets on \(X\) and structures of left \(\left({\mathcal{H} \times \cGop}\right)\)-sets on \(X\).
\end{proposition}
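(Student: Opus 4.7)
My plan is to exhibit the bijection explicitly in both directions, verify that each construction produces a well-defined structure of the claimed kind, and then check that the two constructions are mutually inverse. The key conceptual point is that arrows of $\cH \times \cGop$ are pairs $(h,g)\in \Ha\times\Ga$ with source $\mathsf{s}(h,g)=(\mathsf{s}(h),\mathsf{t}(g))$ and target $\mathsf{t}(h,g)=(\mathsf{t}(h),\mathsf{s}(g))$, and the composition is $(h_1,g_1)(h_2,g_2)=(h_1h_2,\,g_2g_1)$ whenever $\mathsf{s}(h_1)=\mathsf{t}(h_2)$ and $\mathsf{t}(g_1)=\mathsf{s}(g_2)$.

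First I would go from bisets to left sets. Given an $(\cH,\cG)$-biset $(X,\vartheta,\varsigma)$ with left action $\lambda$ and right action $\rho$, define the structure map $\Theta=(\vartheta,\varsigma)\colon X\to\Ho\times\Go$ and the action
\[
\mu\bigl((h,g),x\bigr)\;:=\;h(xg)\;=\;(hx)g,
\]
valid whenever $(\mathsf{s}(h),\mathsf{t}(g))=(\vartheta(x),\varsigma(x))=\Theta(x)$. The equality $h(xg)=(hx)g$ is exactly condition $(2)$ of Definition~\ref{def:biset}. I would check the three axioms of a left $(\cH\times\cGop)$-set: axiom $(1)$ follows from the biset condition $\vartheta(xg)=\vartheta(x)$, $\varsigma(hx)=\varsigma(x)$ together with the analogous conditions for $\lambda$ and $\rho$; axiom $(2)$ reduces to $\iota_{\vartheta(x)}(x\iota_{\varsigma(x)})=x$; and axiom $(3)$ is the calculation
\[
\bigl((h_1,g_1)(h_2,g_2)\bigr)\cdot x = (h_1h_2)\bigl(x(g_2g_1)\bigr)= h_1\bigl(h_2((xg_2)g_1)\bigr) = (h_1,g_1)\cdot\bigl((h_2,g_2)\cdot x\bigr),
\]
which crucially uses the opposite multiplication $g_2g_1$ coming from $\cGop$.

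Conversely, given a left $(\cH\times\cGop)$-set $(X,\Theta,\mu)$, decompose $\Theta=(\vartheta,\varsigma)$ according to the product $\Ho\times\Go$ and define
\[
\lambda(h,x)\;:=\;\mu\bigl((h,\iota_{\varsigma(x)}),x\bigr),\qquad \rho(x,g)\;:=\;\mu\bigl((\iota_{\vartheta(x)},g),x\bigr).
\]
The source/target conditions of the $(\cH\times\cGop)$-action give precisely the compatibilities required by Definition~\ref{def:Gset} for $\lambda$ and $\rho$ separately. The biset compatibility $h(xg)=(hx)g$ then follows at once from the associativity of $\mu$ applied to the identity
\[
(h,\iota_{\varsigma(x)})\,(\iota_{\vartheta(x)},g)\;=\;(h,g)\;=\;(\iota_{\mathsf{t}(h)},g)\,(h,\iota_{\varsigma(x)})
\]
in $\cH\times\cGop$ (recalling that $\iota_{\mathsf{s}(g)}\cdot_{\mathrm{op}} g=g=g\cdot_{\mathrm{op}}\iota_{\mathsf{t}(g)}$).

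Finally, to see that the two constructions are mutually inverse I would simply compute: starting from a biset and going round the loop, $(h,\iota_{\varsigma(x)})\cdot x = h(x\iota_{\varsigma(x)})=hx$ and $(\iota_{\vartheta(x)},g)\cdot x = \iota_{\vartheta(x)}(xg)=xg$ recover $\lambda$ and $\rho$; starting from an $(\cH\times\cGop)$-set, the decomposition $\mu((h,g),x)=\mu((h,\iota_{\varsigma(x)}),\mu((\iota_{\vartheta(x)},g),x))$ (again by associativity together with $(h,\iota_{\varsigma(x)})(\iota_{\vartheta(x)},g)=(h,g)$) shows that the reconstructed action equals $\mu$. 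The main obstacle is not any one step individually but the careful bookkeeping of source/target maps through the opposite groupoid, in particular making sure that $\mathcal{G}^{\text{op}}$ interchanges $\mathsf{s}$ and $\mathsf{t}$ in exactly the way that makes the pairing $h(xg)$ composable and the multiplication table consistent.
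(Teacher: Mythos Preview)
Your approach is correct and essentially identical to the paper's proof: both directions are set up exactly as the paper does (the paper writes $\alpha,\beta$ where you write $\Theta,\mu$, but the formulas coincide), and you supply more of the routine verifications that the paper leaves to the reader.

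There is, however, one small bookkeeping slip of precisely the kind you warned about in your closing remark. The factorization $(h,\iota_{\varsigma(x)})(\iota_{\vartheta(x)},g)=(h,g)$ is not composable in $\cH\times\cGop$: on the $\cGop$-coordinate one would need $\mathsf{t}(\iota_{\varsigma(x)})=\mathsf{s}(g)$, i.e.\ $\varsigma(x)=\mathsf{s}(g)$, whereas the standing hypothesis is $\varsigma(x)=\mathsf{t}(g)$. The correct identity is
\[
(h,\iota_{\mathsf{s}(g)})\,(\iota_{\vartheta(x)},g)\;=\;(h,g),
\]
noting that $\mathsf{s}(g)=\varsigma(xg)$ so that $\iota_{\mathsf{s}(g)}=\iota_{\varsigma(xg)}$ is exactly the identity needed to act on $xg$. (Your other factorization $(\iota_{\mathsf{t}(h)},g)(h,\iota_{\varsigma(x)})=(h,g)$ is correct as written.) The same correction applies in your final paragraph, where $\mu((h,\iota_{\varsigma(x)}),\mu((\iota_{\vartheta(x)},g),x))$ should read $\mu((h,\iota_{\varsigma(xg)}),\mu((\iota_{\vartheta(x)},g),x))$. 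With this adjustment all your computations go through unchanged.
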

\begin{proof}
Let \(X\) be an \(\left({\mathcal{H}, \mathcal{G}}\right)\)-biset with actions and structures map as follows:
\[ \begin{aligned}
& \vartheta \colon X \longrightarrow \mathcal{H}_{ \scriptscriptstyle{0} } \\
& \lambda \colon \mathcal{H}_{ \scriptscriptstyle{1} }  {{}_{ \scriptscriptstyle{\mathsf{s} } } {\times }}_{ \scriptscriptstyle{\vartheta} }\, X \longrightarrow X
\end{aligned}
\qquad \text{and} \qquad
\begin{aligned}
& \varsigma \colon X \longrightarrow \mathcal{G}_{ \scriptscriptstyle{0} } \\
& \rho \colon X  {{}_{ \scriptscriptstyle{\varsigma} } {\times }}_{ \scriptscriptstyle{\mathsf{t} } }\, \mathcal{G}_{ \scriptscriptstyle{1} } \longrightarrow X.
\end{aligned}
\]
We define the structure map and action as follows
\[ \begin{aligned}{\alpha}  \colon & {X} \longrightarrow {\left({\mathcal{H} \times \Gop}\right)_{ \scriptscriptstyle{0} }  } \\ & {x}  \longmapsto {\left({\vartheta\left({x}\right), \varsigma\left({x}\right) }\right) }\end{aligned} 
\qquad \text{and} \qquad
\begin{aligned}{\beta}  \colon & {\left({\mathcal{H} \times \Gop}\right)_{ \scriptscriptstyle{1} }  {{}_{ \scriptscriptstyle{ \mathsf{s} } } {\times}}_{ \scriptscriptstyle{\alpha} }\, X} \longrightarrow {X} \\ & {\left({(h,g),x}\right) }  \longmapsto {h\left({xg}\right) .}\end{aligned} 
\]
Now the verification that \(\left(X, \alpha\right)\) is a left \(\left({\mathcal{H} \times \Gop}\right)\)-set is obvious.

Conversely,  let \(X\) be an \(\left({\mathcal{H} \times \Gop}\right)\)-left set with
\[ \alpha \colon X \longrightarrow \left({\mathcal{H} \times \Gop}\right)_{ \scriptscriptstyle{0} }  
\qquad \text{and} \qquad
\beta \colon \left({\mathcal{H} \times \Gop}\right)_{ \scriptscriptstyle{1} }  {{}_{ \scriptscriptstyle{ \mathsf{s} } } {\times}}_{ \scriptscriptstyle{\alpha} }\, X \longrightarrow X
\]
as structure map and action.
Let be \(p_1\) and \(p_2\) the canonical projections
\[ \begin{aligned}{p_1}  \colon & { \left({\mathcal{H} \times \Gop}\right)_{ \scriptscriptstyle{0} } } \longrightarrow {\mathcal{H}_{ \scriptscriptstyle{0} } } \\ & {\left({h,g}\right) }  \longmapsto {h}\end{aligned} 
\qquad \text{and} \qquad
 \begin{aligned}{p_2}  \colon & { \left({\mathcal{H} \times \Gop}\right)_{ \scriptscriptstyle{0} } } \longrightarrow {\mathcal{G}_{ \scriptscriptstyle{0} } } \\ & {\left({h,g}\right) }   \longmapsto {g.}\end{aligned} 
\]
We define a structure of left \(\mathcal{H}\)-set as follows
\[ \begin{aligned}{\vartheta}  \colon & {X} \longrightarrow {\mathcal{H}_{ \scriptscriptstyle{0} } } \\ & {x}   \longmapsto {p_1 \alpha\left({x}\right) }\end{aligned} 
\qquad \text{and} \qquad
\begin{aligned}{\lambda}  \colon & {\mathcal{H}_{ \scriptscriptstyle{1} }  {{}_{ \scriptscriptstyle{\mathsf{s} } } {\times}}_{ \scriptscriptstyle{\vartheta} }\, X } \longrightarrow {X} \\ & {\left({x,g}\right) }   \longmapsto   {\beta\left({(h, \iota_{ \scriptscriptstyle{\varsigma \left({x}\right) } }) , x }\right)}\end{aligned} 
\]
and a structure of right \(\mathcal{G}\)-set as follows:
\[ \begin{aligned}{\varsigma}  \colon & {X} \longrightarrow {\mathcal{G}_{ \scriptscriptstyle{0} } } \\ & {x}   \longmapsto {p_2 \alpha\left({x}\right) }\end{aligned} 
\qquad \text{and} \qquad
\begin{aligned}{\rho}  \colon & {X  {{}_{ \scriptscriptstyle{\varsigma} } {\times}}_{ \scriptscriptstyle{\mathsf{t} } }\,\mathcal{G}_{ \scriptscriptstyle{1} } } \longrightarrow {X} \\ & {\left({x,g}\right) }   \longmapsto {\beta\left({(\iota_{ \scriptscriptstyle{\vartheta\left({x}\right) } } , g), x }\right).}\end{aligned} 
\]
For each \(\left({h,x}\right) \in \mathcal{H}_{ \scriptscriptstyle{1} }  {{}_{ \scriptscriptstyle{\mathsf{s} } } {\times}}_{ \scriptscriptstyle{\vartheta} }\, X\) we have
\[ \alpha\left({x}\right) = \left({\vartheta\left({x}\right), \varsigma\left({x}\right) }\right)
= \left({\mathsf{s}\left({h}\right), \mathsf{t}\left({\iota_{ \scriptscriptstyle{\varsigma\left({x }\right) } } }\right) }\right)
 = \mathsf{s}\left({h, \iota_{ \scriptscriptstyle{\varsigma\left({x}\right) } } }\right) 
\]
so \(\lambda\) is well defined.
Now the verification that \(\left(X, \vartheta\right)\) is a left \(\mathcal{H}\)-set is obvious.
As for the  right action, for each \(\left({x,g}\right) \in X  {{}_{ \scriptscriptstyle{\varsigma} } {\times}}_{ \scriptscriptstyle{\mathsf{t} } }\, \mathcal{G}_{ \scriptscriptstyle{1} }\), we have 
\[ \alpha\left({x}\right) = \left({\vartheta\left({x}\right), \varsigma\left({x}\right) }\right) 
= \left({\mathsf{s}\left({\iota_{ \scriptscriptstyle{\vartheta\left({x}\right) } } }\right), \mathsf{t}\left({g}\right) }\right) 
= \mathsf{s}\left({\iota_{ \scriptscriptstyle{\vartheta\left({x}\right) } }, g}\right) 
\]
so \(\rho\) is well defined.
The verification that \(\left(X, \varsigma\right)\) is a right \(\mathcal{G}\)-set is also obvious.
Now we only have to verify the properties of a biset but this is easy and it is left to the reader.
As a consequence, $X$ is an $(\cH,\cG)$-biset. 
Lastly, it is clear that these two constructions are mutually  inverse  and this completes the proof.
\end{proof}

A similar  proof to that of Proposition \ref{pBisetsLeftSets}, works to show that there is a one-to-one correspondence between right $(\cHop \times \cG)$-set structure and $(\cH, \cG)$-set structure. Furthermore, any $(\cH, \cG)$-equivariant map (i.e., any morphism of $(\cH, \cG)$-bisets) is canonically transformed, under this correspondence,  to a left $(\cH \times \cGop)$-equivariant map. In this way, we have  the following corollary. 

\begin{corollary}\label{coro:iso}
Let $\cH$ and $\cG$ be two groupoids. Then there are canonical isomorphisms of categories between the category of $(\cH, \cG)$-bisets, left $(\cH \times \cGop)$-sets and  right $(\cHop \times \cG)$-sets. 
\end{corollary}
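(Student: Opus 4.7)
The plan is to upgrade Proposition~\ref{pBisetsLeftSets}, which provides a bijection at the level of objects, to an isomorphism of categories, and then to obtain the right $(\cHop \times \cG)$-set incarnation by symmetry. So first I would recall that, given the constructions defined in the proof of Proposition~\ref{pBisetsLeftSets}, if $(X,\vartheta,\varsigma)$ is an $(\cH,\cG)$-biset with actions $\lambda, \rho$, then the associated left $(\cH \times \cGop)$-set has structure map $\alpha(x)=(\vartheta(x),\varsigma(x))$ and action $\beta((h,g),x)=h(xg)$; and conversely, starting from an $(\cH \times \cGop)$-left action $\beta$, one recovers the two one-sided actions by plugging identities into one of the two slots. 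These two procedures are mutually inverse, as already observed.

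Next, for the morphism part, let $F\colon X \to X'$ be a map where $X$ and $X'$ carry $(\cH,\cG)$-biset structures and consider the associated left $(\cH \times \cGop)$-set structures $(X,\alpha,\beta)$ and $(X',\alpha',\beta')$. I would verify the equivalence: $F$ is a morphism of $(\cH,\cG)$-bisets (i.e.\ it commutes with $\vartheta,\varsigma$ and with $\lambda,\rho$) if and only if it is a morphism of left $(\cH\times \cGop)$-sets (i.e.\ it commutes with $\alpha$ and $\beta$). One direction is immediate from the definition $\alpha=(\vartheta,\varsigma)$, $\beta((h,g),x)=h(xg)$; for the converse one specialises $\beta$ on elements of the form $(h,\iota_{\varsigma(x)})$ and $(\iota_{\vartheta(x)},g)$ to recover left $\cH$- and right $\cG$-equivariance separately. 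Since identities correspond to identities and composition is preserved on the nose (both directions only compose underlying set-maps), this produces a strict functor in each direction, and the two functors are mutually inverse by Proposition~\ref{pBisetsLeftSets}.

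Finally, for the right $(\cHop \times \cG)$-set version, I would run the exact same argument, this time assigning to an $(\cH,\cG)$-biset $(X,\vartheta,\varsigma)$ the structure map $\alpha'(x)=(\vartheta(x),\varsigma(x))\in(\cHop\times\cG)_{\Sscript{0}}$ and the right action $\beta'(x,(h,g))=h^{-1}(xg)$, as indicated in the paragraph preceding the corollary; mutatis mutandis the argument of Proposition~\ref{pBisetsLeftSets} and the morphism verification above go through without change. Alternatively, and more conceptually, one uses that $\lmod{\cA} \cong \rmod{\cA^{op}}$ canonically for any groupoid $\cA$, applied to $\cA=\cH\times\cGop$, noting that $(\cH\times\cGop)^{op}=\cHop\times\cG$; this composed with the previously established isomorphism gives the desired isomorphism of categories.

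The only subtle point — and hence the main obstacle — is making the morphism correspondence compatible with the asymmetric roles of $\vartheta$ and $\varsigma$ in the biset versus the pair structure $\alpha$: one has to check that a single map $F$ being $(\cH\times \cGop)$-equivariant genuinely splits as being simultaneously left $\cH$- and right $\cG$-equivariant, which is exactly the point at which the trick of specialising $\beta$ at identity arrows is needed. Once this is in place, functoriality and inverseness are formal.
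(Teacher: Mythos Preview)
Your approach is essentially the same as the paper's: the paper simply remarks that Proposition~\ref{pBisetsLeftSets} has an analogue for right $(\cHop\times\cG)$-sets and that biset morphisms correspond to equivariant maps, and you spell this out. Your morphism verification (specialising at identity arrows) is exactly the right check.

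One small slip: the explicit right action you write, $\beta'(x,(h,g))=h^{-1}(xg)$, is not well defined in general. With $\alpha'(x)=(\vartheta(x),\varsigma(x))$ and $\Sf{t}_{\cHop\times\cG}(h,g)=(\Sf{s}_\cH(h),\Sf{t}_\cG(g))$, the condition $\alpha'(x)=\Sf{t}(h,g)$ gives $\vartheta(x)=\Sf{s}_\cH(h)$, so $h(xg)$ makes sense but $h^{-1}(xg)$ does not unless $h$ is a loop. The correct formula is $\beta'(x,(h,g))=h(xg)$, which is precisely what your alternative argument via $\lmod{\cA}\cong\rmod{\cA^{\mathrm{op}}}$ with $\cA=\cH\times\cGop$ produces automatically. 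With that correction, your proof is complete and matches the paper.
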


\subsection{Orbits and stabilizers of bisets  and double cosets}\label{ssec:StabOrbBiset}

We will use the notations of the proof of Proposition~\(\ref{pBisetsLeftSets}\).
Let \(X\) be an \(\left({\mathcal{H}, \mathcal{G}}\right)\)-biset: it becomes an \(\left({\mathcal{H} \times \cGop }\right)\)-left set, so we have
\[ \left(\Stabi_{ \scriptscriptstyle{\left({\mathcal{H}, \, \mathcal{G} }\right) }}\left({x}\right)  \right)_{ \scriptscriptstyle{0} } 
= \left( \Stabi_{ \scriptscriptstyle{\left({\mathcal{H} \, \times \,  \cGop}\right) }}\left({x}\right)  \right)_{ \scriptscriptstyle{0} } 
= \Set{\left({\vartheta\left({x}\right), \varsigma\left({x}\right) }\right) } 
\]
and
\[ \begin{aligned}
\left(\Stabi_{ \scriptscriptstyle{\left({\mathcal{H}, \, \mathcal{G} }\right) }}\left({x}\right)  \right)_{ \scriptscriptstyle{1} } 
&= \left( \Stabi_{ \scriptscriptstyle{\left({\mathcal{H} \, \times \, \cGop }\right) }}\left({x}\right)  \right)_{ \scriptscriptstyle{1} }  
= \Set{\left({h, g}\right) \in \mathcal{H}_{ \scriptscriptstyle{1} } \times \Gop_{ \scriptscriptstyle{1} } | \begin{gathered}
\mathsf{s}\left({h,g}\right) = \mathsf{t}\left({h,g}\right) = \alpha\left({x}\right) \\
\left({h,g}\right) x = x
\end{gathered} }  \\
& = \Set{\left({h, g}\right) \in \mathcal{H}_{ \scriptscriptstyle{1} } \times \Gop_{ \scriptscriptstyle{1} } | \begin{gathered}
\mathsf{s}\left({h}\right) = \mathsf{t}\left({h}\right) = \vartheta\left({x}\right) \\
\mathsf{s}\left({g}\right) = \mathsf{t}\left({g}\right) = \varsigma\left({x}\right) \\
h xg= x
\end{gathered} } .
\end{aligned}
\]

For a given elements $x \in X$, the orbit set of $x$ is given by  
\[ 
\Orb_{\Sscript{\left({\mathcal{H}, \,  \mathcal{G} }\right) }}(x) = \mathcal{H}[ x ] \mathcal{G} = \LR{ hxg = \left({h,g}\right)x \in X | \;\; \mathsf{s}\left({h}\right)= \vartheta\left({x}\right), \; \varsigma\left({x}\right)= \mathsf{t}\left({g}\right)  }.
\]

\begin{proposition}\label{prop:Doublecoset}
Given a groupoid \(\mathcal{H}\), let \(\mathcal{A}\) and \(\mathcal{B}\) be subgroupoid of \(\mathcal{H}\).
We define
\[ X= \mathcal{A}_{ \scriptscriptstyle{0} }  {{}_{ \scriptscriptstyle{ } } {\times}}_{ \scriptscriptstyle{\mathsf{t} } }\, \mathcal{H}  {{}_{ \scriptscriptstyle{ \mathsf{s} } } {\times}}_{ \scriptscriptstyle{ } }\, \mathcal{B}_{ \scriptscriptstyle{0} } = \LR{ \left({a,h,b}\right) \in \mathcal{A}_{ \scriptscriptstyle{0} } \times \mathcal{H}_{ \scriptscriptstyle{1} } \times \mathcal{B}_{ \scriptscriptstyle{0} }  | \; a = \mathsf{t}\left({h}\right) , \mathsf{s}\left({h}\right)= b }.
\]
Then \(X\) is an \(\left({\mathcal{A}, \mathcal{B}}\right)\)-biset with structure maps
\[ \begin{aligned}{\vartheta}  \colon & {X} \longrightarrow {\mathcal{A}_{ \scriptscriptstyle{0} } } \\ & {\left({a,h,b}\right) }   \longmapsto {a}\end{aligned} 
\qquad \text{and} \qquad
\begin{aligned}{\varsigma}  \colon & {X} \longrightarrow {\mathcal{B}_{ \scriptscriptstyle{0} } } \\ & {\left({a,h,b}\right) }   \longmapsto {b}\end{aligned} 
\]
and action maps
\[ \begin{aligned}{\lambda}  \colon & {\mathcal{A}_{ \scriptscriptstyle{1} }  {{}_{ \scriptscriptstyle{\mathsf{s} } } {\times}}_{ \scriptscriptstyle{\vartheta} }\, X} \longrightarrow {X} \\ & {\left({r, \left({a,h,b}\right) }\right) }  \longmapsto {\left({\mathsf{t}\left({r}\right), r h, b}\right) }\end{aligned}
\qquad \text{and} \qquad
 \begin{aligned}{\rho}  \colon & {X  {{}_{ \scriptscriptstyle{\varsigma} } {\times}}_{ \scriptscriptstyle{\mathsf{t} } }\, \mathcal{B}_{ \scriptscriptstyle{1} } } \longrightarrow {X} \\ & {\left({\left({a, h, b}\right), q}\right) }  \longmapsto {\left({a, hq, \mathsf{s}\left({q}\right) }\right) .}\end{aligned}  
\]
\end{proposition}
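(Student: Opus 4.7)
The plan is to verify directly, using Definition \ref{def:Gset} for the one-sided actions and Definition \ref{def:biset} for the compatibility, that the quadruple $(X,\vartheta,\varsigma,\lambda,\rho)$ satisfies all the required axioms. Every verification reduces to elementary groupoid arithmetic inside $\mathcal{H}_{\scriptscriptstyle 1}$, so I would organize the argument in three steps: well-definedness of the action maps, the one-sided action axioms, and the biset compatibility.

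First I would check that the formulas defining $\lambda$ and $\rho$ land in $X$. For $\lambda$, given $(r,(a,h,b)) \in \mathcal{A}_{\scriptscriptstyle 1}\,{}_{\scriptscriptstyle\mathsf{s}}\!\times_{\scriptscriptstyle\vartheta}\,X$, the equality $\mathsf{s}(r)=a=\mathsf{t}(h)$ allows the product $rh$ to be formed inside $\mathcal{H}_{\scriptscriptstyle 1}$ (using that $\mathcal{A}$ is a subgroupoid of $\mathcal{H}$); then $\mathsf{t}(rh)=\mathsf{t}(r)\in\mathcal{A}_{\scriptscriptstyle 0}$ and $\mathsf{s}(rh)=\mathsf{s}(h)=b\in\mathcal{B}_{\scriptscriptstyle 0}$, so $(\mathsf{t}(r),rh,b)\in X$. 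The check for $\rho$ is entirely analogous, now factoring the composition through $\mathcal{B}$.

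Next I would verify the one-sided action axioms. Unitality for $\lambda$: since $\vartheta(a,h,b)=a=\mathsf{t}(h)$, one has $\iota_{\scriptscriptstyle a}\cdot(a,h,b)=(\mathsf{t}(\iota_{\scriptscriptstyle a}),\iota_{\scriptscriptstyle a}h,b)=(a,h,b)$. Associativity: for composable $r_1,r_2\in\mathcal{A}_{\scriptscriptstyle 1}$ with $\mathsf{s}(r_2)=a$, the computation $(r_1r_2)\cdot(a,h,b)=(\mathsf{t}(r_1),(r_1r_2)h,b)=(\mathsf{t}(r_1),r_1(r_2h),b)=r_1\cdot(r_2\cdot(a,h,b))$ follows from associativity of the multiplication in $\mathcal{H}$. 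The compatibility of $\lambda$ with $\vartheta$ (i.e., $\vartheta(\lambda(r,x))=\mathsf{t}(r)$) is immediate from the definition. The corresponding statements for the right action $\rho$ are symmetric.

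Finally, for the two biset conditions, the calculations $\vartheta(\rho((a,h,b),q))=\vartheta(a,hq,\mathsf{s}(q))=a=\vartheta(a,h,b)$ and $\varsigma(\lambda(r,(a,h,b)))=\varsigma(\mathsf{t}(r),rh,b)=b=\varsigma(a,h,b)$ establish (1), while the associativity identity $\lambda(r,\rho((a,h,b),q))=(\mathsf{t}(r),r(hq),\mathsf{s}(q))=(\mathsf{t}(r),(rh)q,\mathsf{s}(q))=\rho(\lambda(r,(a,h,b)),q)$ establishes (2); in both places the only non-trivial ingredient is the associativity of multiplication in $\mathcal{H}$. There is no genuine obstacle in the argument; the only care needed is to keep track of which fibered product each element lives in and to use that $\mathcal{A},\mathcal{B}\subseteq\mathcal{H}$ as subgroupoids so that all compositions stay inside $\mathcal{H}_{\scriptscriptstyle 1}$.
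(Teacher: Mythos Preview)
Your proof is correct and follows essentially the same direct-verification approach as the paper: both check the one-sided action axioms and then the biset compatibility conditions by elementary groupoid arithmetic in $\mathcal{H}_{\scriptscriptstyle 1}$. The only minor differences are that you explicitly verify the action maps land in $X$ (which the paper leaves implicit) and you spell out the left action while the paper spells out the right one, but the arguments are otherwise identical.
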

\begin{proof}
We have to check the properties of a groupoid right action.
\begin{enumerate}
\item For each \(\left({a,h,b}\right) \in X\) and \(q \in \mathcal{B}_{ \scriptscriptstyle{1} }\) such that \(\varsigma\left({a,h,b}\right)= \mathsf{t}\left({q}\right)\) we have
\[ \varsigma\left({\left({a,h,b}\right)d}\right)= \varsigma\left({a, hq, \mathsf{s}\left({q}\right) }\right) 
= \mathsf{s}\left({q}\right).
\]
\item For each \(\left({a,h,b}\right) \in X\) we have
\[ \left({a,h,b}\right) \iota_{ \scriptscriptstyle{\varsigma\left({a,h,b}\right) } } = \left({a,h,b}\right) \iota_{ \scriptscriptstyle{b} } 
= \left({a, h \iota_{ \scriptscriptstyle{b} }, \mathsf{s}\left({\iota_{ \scriptscriptstyle{b} } }\right) }\right) 
= \left({a, h, b}\right).
\]
\item For each \(\left({a,h,b}\right) \in X\) anq \(q,q' \in \mathcal{B}_{ \scriptscriptstyle{1} }\) such that \(\varsigma\left({a,h,b}\right)= \mathsf{t}\left({q}\right)\) anq \(\mathsf{s}\left({q}\right)= \mathsf{t}\left({q'}\right)\) we have
\[ \left({\left({a,h,b}\right) q }\right) q' 
= \left({a, hq, \mathsf{s}\left({q}\right) }\right) q'
= \left({a, hq q', \mathsf{s}\left({q'}\right) }\right) 
= \left({a, hq q', \mathsf{s}\left({q q'}\right) }\right) 
= \left({a, h , b}\right)\left({q q'}\right).
\]
\end{enumerate}
The properties of the left action are similarly proved.
Now, we have to check the compatibility conditions  of a biset.
For each \(\left({a, h, b}\right) \in X\), \(r \in \mathcal{A}_{ \scriptscriptstyle{1} }\) and \(q \in \mathcal{B}_{ \scriptscriptstyle{1} }\) such that \(\vartheta\left({a, h, b}\right) = \mathsf{s}\left({a}\right)\) and \(\varsigma\left({a, h, b}\right) = \mathsf{t}\left({q}\right)\) we have
\[ \begin{gathered}
\vartheta\left({ \left({a, h, b}\right) q}\right) = \vartheta\left({a , hq, \mathsf{s}\left({q}\right) }\right) 
= a = \vartheta\left({a, h, b}\right),  \\
\varsigma\left({r \left({a, h, b}\right) }\right) = \varsigma\left({ \mathsf{t}\left({r}\right), r h, b}\right) 
= b = \varsigma\left({a, h, b}\right) 
\end{gathered}
\]
and
\[ r\left({\left({a, h, b}\right) q}\right) = r \left({a, hq, \mathsf{s}\left({q }\right) }\right) 
= \left({\mathsf{t}\left({r}\right), r h q, \mathsf{s}\left({q}\right) }\right) 
= \left({ \mathsf{t}\left({r}\right), r h, b}\right) q
= \left({ r \left({a, h, b}\right) }\right) q,
\]
and this finishes the proof.  
\end{proof}

\subsection{The tensor product of groupoid-bisets}\label{ssec:tensor}

Next, from \cite[page~161]{Moedijk/Mrcun:2005}, \cite[Chap.~III, §4, 3.1]{DemGab:GATIGAGGC} and \cite[Définition~1.3.1, page~114]{GiraCohoNAb}, we recall the definition of the tensor product of two groupoid-bisets and show its universal property. Fix three groupoids $\cG$, $\cH$ and $\cK$. Given $(Y, \varkappa,\varrho)$ a $(\cG,\cK)$-biset  and $(X, \vartheta,\varsigma)$ an $(\cH, \cG)$-biset.  
Considering the triple $\big(  X \, \due \times {\scriptscriptstyle{\varsigma}} {\, \scriptscriptstyle{\sf{t}}} \Ga \, \due \times {\scriptscriptstyle{\sf{s}}} {\, \scriptscriptstyle{\varkappa}}  Y , \bara{\vartheta}, \bara{\varrho} \big)$, where 
$$
\bara{\vartheta}: X \, \due \times {\scriptscriptstyle{\varsigma}} {\, \scriptscriptstyle{\sf{t}}} \Ga \, \due \times {\scriptscriptstyle{\sf{s}}} {\, \scriptscriptstyle{\varkappa}}  Y  \longrightarrow  \Ho, \; \Big( (x,g,y) \longmapsto \vartheta(x) \Big) ;\quad  \bara{\varrho}: X \, \due \times {\scriptscriptstyle{\varsigma}} {\, \scriptscriptstyle{\sf{t}}} \Ga \, \due \times {\scriptscriptstyle{\sf{s}}} {\, \scriptscriptstyle{\varkappa}}  Y \longrightarrow \Ko, \;\Big( (x,  g, y)  \longmapsto \varrho(y) \Big), 
$$
we  have that $\big(  X \, \due \times {\scriptscriptstyle{\varsigma}} {\, \scriptscriptstyle{\sf{t}}} \Ga \, \due \times {\scriptscriptstyle{\sf{s}}} {\, \scriptscriptstyle{\varkappa}}  Y , \bara{\vartheta}, \bara{\varrho} \big)$ is an $(\cH, \cK)$-biset with actions:
\begin{equation*}
\begin{split}
\Big(   X \, \due \times {\scriptscriptstyle{\varsigma}} {\, \scriptscriptstyle{\sf{t}}} \Ga \, \due \times {\scriptscriptstyle{\sf{s}}} {\, \scriptscriptstyle{\varkappa}}  Y   \Big)\, \due \times {\Sscript{\bara{\varrho}}}{\,\Sscript{\Sf{t}} } \, \Ka \longrightarrow X \, \due \times {\scriptscriptstyle{\varsigma}} {\, \scriptscriptstyle{\sf{t}}} \Ga \, \due \times {\scriptscriptstyle{\sf{s}}} {\, \scriptscriptstyle{\varkappa}}  Y , \quad \Big(  \big( (x, g, y), k\big) \longmapsto (x, g, yk)  \Big)  \\ 
\Ha\, \due \times {\Sscript{\Sf{s}}}{\,\Sscript{\bara{\vartheta}} } \, \Big(  X \, \due \times {\scriptscriptstyle{\varsigma}} {\, \scriptscriptstyle{\sf{t}}} \Ga \, \due \times {\scriptscriptstyle{\sf{s}}} {\, \scriptscriptstyle{\varkappa}}  Y  \Big) \longrightarrow X \, \due \times {\scriptscriptstyle{\varsigma}} {\, \scriptscriptstyle{\sf{t}}} \Ga \, \due \times {\scriptscriptstyle{\sf{s}}} {\, \scriptscriptstyle{\varkappa}}  Y, \quad \Big(  \big(h,  (x,g,y)\big) \longmapsto (hx,g, y) \Big).
\end{split}
\end{equation*}

On the other hand, consider the map  $\omegaup: X \, \due \times {\Sscript{\varsigma}}{\, \Sscript{\varkappa}} \, Y \to \Go$ sending $(x,y) \mapsto \varkappa(y)=\varsigma(x)$. Then  the pair $\big(X \, \due \times {\Sscript{\varsigma}}{\, \Sscript{\varkappa}} \, Y , \omegaup \big)$ admits a structure of right $\cG$-set with action 
$$
\big(X \, \due \times {\Sscript{\varsigma}}{\, \Sscript{\varkappa}} \, Y\big)\, \due \times {\Sscript{\omegaup}}{\,\Sscript{\Sf{t}}} \, \Ga \longrightarrow \big(X \, \due \times {\Sscript{\varsigma}}{\, \Sscript{\varkappa}} \, Y\big),\qquad  \Big( \big((x,y), g\big) \longmapsto (xg,g^{-1}y) \Big).
$$
Following the notation and the terminology of \cite[Remark 2.12]{ElKaoutit:2017}, we denote by $X \tensor{\cG} Y := \big(X \, \due \times {\Sscript{\varsigma}}{\, \Sscript{\varkappa}} \, Y\big)  / \cG$ the orbit set of the right $\cG$-set $\big(X \, \due \times {\Sscript{\varsigma}}{\, \Sscript{\varkappa}} \, Y, \omegaup\big)$. We refer to $X \tensor{\cG} Y$ as \emph{the tensor product over $\cG$} of $X$ and $Y$.  It turns out that $X \tensor{\cG} Y$ admits a structure of $(\cH,\cK)$-biset whose structure maps are given as follows. First, denote by $x\tensor{\cG}y$ the equivalence class of an element $(x,y) \in  X \, \due \times {\Sscript{\varsigma}}{\, \Sscript{\varkappa}} \, Y$. That is, we have 
$$
xg\tensor{\cG}y=x\tensor{\cG}gy, \;\; \text{for every } \; g \in \Ga \; \text{ with }\; \varkappa(y)=\Sf{t}(g)=\varsigma(x).
$$
Second, one can easily check that the maps 
$$
\td{\varrho}: X \tensor{\cG} Y \to \Ko, \;\; \Big( x\tensor{\cG}y \longmapsto \varrho(y) \Big) ;\quad  \td{\vartheta}: X \tensor{\cG} Y \to \Ho, \;\; \Big( x\tensor{\cG} y \longmapsto \vartheta(x) \Big)
$$ 
are well defined, in such a way that the following maps
\begin{equation*}
\begin{split}
\big(X \tensor{\cG} Y\big)\, \due \times {\Sscript{\td{\varrho}}}{\,\Sscript{\Sf{t}} } \, \Ka \longrightarrow X \tensor{\cG} Y, \quad \Big(  \big( x\tensor{\cG}y, k\big) \longmapsto x\tensor{\cG}yk \Big)  \\ 
\Ha\, \due \times {\Sscript{\Sf{s}}}{\,\Sscript{\td{\varsigma}} } \, \big(X \tensor{\cG} Y\big) \longrightarrow X \tensor{\cG} Y, \quad \Big(  \big(h,  x\tensor{\cG}y \big)  \longmapsto hx\tensor{\cG}y \Big)
\end{split}
\end{equation*}
induce a structure of $(\cH,\cK)$-biset on $X\tensor{\cG}Y$.

The following describes the universal property of the tensor product between groupoid-bisets, see also \cite[Remark 2.2]{Kaoutit/Kowalzig:14}.

\begin{lemma}\label{lema:TP}
Let $\cG, \cH$ and   $\cK$ be three groupoids and $(X, \vartheta,  \varsigma)$, $(Y, \varkappa, \varrho)$ the above groupoid-bisets.  Then the following diagram 
\begin{equation}\label{Eq:CoEq}
\xymatrix{ X \, \due \times {\scriptscriptstyle{\varsigma}} {\, \scriptscriptstyle{\sf{t}}} \Ga \, \due \times {\scriptscriptstyle{\sf{s}}} {\, \scriptscriptstyle{\varkappa}}  Y  \ar@<0.70ex>@{->}^-{\rho\,  \times\,  1_{\Sscript{X}}}[rr] \ar@<-0.70ex>@{->}_-{1_{\Sscript{Y}} \, \times\,  \lambda }[rr]  & & X \, \due \times {\scriptscriptstyle{\varsigma}} {\, \scriptscriptstyle{\varkappa}} Y      \ar@{->>}^-{}[rr]  &&  X\tensor{\cG} Y    }
\end{equation}
is the co-equalizer, in the category of $(\cH, \cK)$-bisets, of the pair of morphisms $\big( \rho \times 1_{\Sscript{X}}, 1_{\Sscript{Y}} \times \lambda  \big)$. 
\end{lemma}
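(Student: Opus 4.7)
The plan is to verify the universal property defining a coequalizer in the category of $(\cH,\cK)$-bisets. First I would check that the two parallel arrows coincide after post-composition with the canonical projection $\pi \colon X \, \due \times {\scriptscriptstyle{\varsigma}} {\, \scriptscriptstyle{\varkappa}} Y \twoheadrightarrow X \tensor{\cG} Y$: unfolding definitions, this reduces to the identity $xg \tensor{\cG} y = x \tensor{\cG} gy$ whenever $\varsigma(x) = \Sf{t}(g)$ and $\Sf{s}(g) = \varkappa(y)$, which follows at once from the fact that $(xg, y) \cdot g^{-1} = (x, gy)$ under the right $\cG$-action $(x,y) \cdot g = (xg, g^{-1}y)$ used to define $X \tensor{\cG} Y$ as an orbit set.

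Next I would establish the universal property. Given any $(\cH,\cK)$-biset $Z$ and a biset morphism $f \colon X \, \due \times {\scriptscriptstyle{\varsigma}} {\, \scriptscriptstyle{\varkappa}} Y \to Z$ that co-equalizes the pair $\bigl( \rho \times 1_{\Sscript{Y}},\, 1_{\Sscript{X}} \times \lambda \bigr)$, the identity $f(xg,y) = f(x,gy)$ says precisely that $f$ is constant on the $\cG$-orbits described above. Hence the universal property of the set-theoretic quotient produces a unique set map $\bara{f}\colon X \tensor{\cG} Y \to Z$ with $\bara{f}(x \tensor{\cG} y) = f(x,y)$, and the uniqueness of $\bara{f}$ as a biset morphism is then forced by the surjectivity of $\pi$.

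It remains to verify that $\bara{f}$ is itself a morphism of $(\cH,\cK)$-bisets. Working on representatives, one computes $\bara{f}(h(x\tensor{\cG}y)) = \bara{f}(hx\tensor{\cG}y) = f(hx,y) = h \, f(x,y) = h\, \bara{f}(x\tensor{\cG}y)$, and a symmetric computation handles the right $\cK$-action; compatibility with the structure maps $\td{\vartheta}$ and $\td{\varrho}$ is similar and equally immediate from the explicit formulas given in Subsection~\ref{ssec:tensor}. The main obstacle is essentially bookkeeping: one has to verify beforehand that the structure maps $\td{\vartheta}, \td{\varrho}$ and the induced $\cH$- and $\cK$-actions on $X \tensor{\cG} Y$ are well-defined on equivalence classes, which is a direct consequence of the biset axioms forcing $\vartheta$ and $\varrho$ to be invariant along the right $\cG$-action on $X$ and the left $\cG$-action on $Y$, respectively, and of the compatibility of the $\cH$- and $\cK$-actions with that of $\cG$.
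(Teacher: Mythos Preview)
Your proposal is correct and is precisely the routine verification that the paper omits: the paper's own proof of this lemma consists of the single word ``Straightforward.'' You supply exactly the expected details of the coequalizer universal property, and your computations (in particular the orbit identification $(xg,y)\cdot g^{-1}=(x,gy)$ and the descent of $f$ to $\bara{f}$) are accurate.
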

\begin{proof}
Straightforward. 
\end{proof}

\section{Mackey Formula for Groupoids}\label{sec:MF}

Now we will explain and prove the main result of this work.
Before doing this, however, we have to introduce some particular groupoid-bisets and prove their properties.
We will also have to define a specific kind of product of two subgroupoids.

\subsection{Orbits of products and cosets}\label{ssec:product}

Let $\cG$ and $\cH$ be two groupoids and $\cL $ a subgroupoid of the product $\cH \times \cG$. Consider the set of equivalence classes  \(\left({\frac{\mathcal{H} \times \mathcal{G}}{\mathcal{L}} }\right)^{\Sscript{\mathsf{L}}} \) as in equation \eqref{Eq:L}. An element  in this set is an equivalence clase of a  fourfold element $(h,g,u,v) \in \cL (\cH \times \cG)^{\Sscript{\tauup}} = \Big(  \Ha \times \Ga \Big)\, \due \times {\Sscript{\Sf{s}}} {\, \Sscript{\tauup_0}} \,  \Lo$ where $\tauup: \cL \hookrightarrow \cH \times \cG$ is the inclusion functor, that is, 
$$
[(h,g, u,v)] \cL=\LR{ \Big({ hh_{1}, gg_{1},  \mathsf{s}(h_{1}),  \mathsf{s}(g_{1}) }\Big) \in \cL(\cH \times \cG)^{\Sscript{\tauup}} | \; (h_{1}, g_{1}) \in \cL_{ \scriptscriptstyle{1} } \text{ with } \, \Sf{t}(h_{1})=\Sf{s}(h), \Sf{t}(g_{1})=\Sf{s}(g) }.
$$

\begin{lemma}
\label{lSubGrpdQuotL}
Given two groupoids \(\mathcal{G}\) and \(\mathcal{H}\), let \(\mathcal{L}\) be a subgroupoid of \(\mathcal{H} \times \mathcal{G}\).
Then the left $\cL$-coset
\[ X= \left({\frac{\mathcal{H} \times \mathcal{G}}{\mathcal{L}} }\right)^{\Sscript{\mathsf{L}}} 
\]
is an \(\left({\mathcal{H}, \mathcal{G}}\right)\)-biset with structure maps
\[ \begin{aligned}{\vartheta}  \colon & {X} \longrightarrow {\mathcal{H}_{ \scriptscriptstyle{0} } } \\ & {[\left({h,g,u,v}\right)]\mathcal{L} }  \longmapsto {\mathsf{t}\left({h}\right) }\end{aligned} 
\qquad \text{and} \qquad
\begin{aligned}{\varsigma}  \colon & {X} \longrightarrow {\mathcal{G}_{ \scriptscriptstyle{0} } } \\ & {[\left({h,g,u,v}\right)]\mathcal{L} }  \longmapsto {\mathsf{t}\left({g}\right) }\end{aligned} 
\]
and actions
\[ \begin{aligned}{\lambda}  \colon & {\mathcal{H}_{ \scriptscriptstyle{1} }  { \, {}_{ \scriptscriptstyle{\mathsf{s} } } {\times}}_{ \scriptscriptstyle{\vartheta} }\, X} \longrightarrow {X} \\ & {\left({h_{1}, [\left({h,g,u,v}\right)]\mathcal{L} }\right) }  \longmapsto {[\left({h_{1}h,g,u,v}\right)]\mathcal{L} }\end{aligned} 
\qquad \text{and} \qquad
\begin{aligned}{\rho}  \colon & {X  {{}_{ \scriptscriptstyle{\varsigma} } {\times}}_{ \scriptscriptstyle{\mathsf{t} } }\, \mathcal{G}_{ \scriptscriptstyle{1} } } \longrightarrow {X} \\ & { \Big({[\left({h,g,u,v}\right)] \mathcal{L}, g_{1}}\Big) } \longmapsto  \Big[{\left({h,g_{1}^{-1} g,u, v}\right)\Big] \mathcal{L} .}\end{aligned} 
\]
\end{lemma}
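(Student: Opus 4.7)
The plan is to verify the conditions in Definition~\ref{def:biset} directly, and the main preliminary step is to show that the proposed structure maps $\vartheta, \varsigma$ and action maps $\lambda, \rho$ are well-defined on equivalence classes. For this I would pick two representatives $(h,g,u,v)$ and $(hh_1, gg_1, \mathsf{s}(h_1), \mathsf{s}(g_1))$ of the same class $[(h,g,u,v)]\mathcal{L}$, where $(h_1,g_1) \in \mathcal{L}_{\scriptscriptstyle 1}$ with $\mathsf{t}(h_1)=\mathsf{s}(h)$ and $\mathsf{t}(g_1)=\mathsf{s}(g)$. Then $\mathsf{t}(hh_1)=\mathsf{t}(h)$ and $\mathsf{t}(gg_1)=\mathsf{t}(g)$, so $\vartheta$ and $\varsigma$ descend to the quotient. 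For $\lambda$, multiplication on the left by $h_1'\in\mathcal{H}_{\scriptscriptstyle 1}$ with $\mathsf{s}(h_1')=\mathsf{t}(h)$ is compatible with the right $\mathcal{L}$-action since it only touches the first component; a similar remark gives well-definedness of $\rho$, using that the right $\mathcal{G}$-action is given by left multiplication by $g_1^{-1}$, which commutes with the right $\mathcal{L}$-action.

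Next I would check the three axioms of Definition~\ref{def:Gset} for the right $\mathcal{G}$-action. Given $[(h,g,u,v)]\mathcal{L}$ and $g_1 \in \mathcal{G}_{\scriptscriptstyle 1}$ with $\varsigma([(h,g,u,v)]\mathcal{L})=\mathsf{t}(g)=\mathsf{t}(g_1)$, the element $g_1^{-1}g$ makes sense, and $\varsigma\bigl([(h,g_1^{-1}g,u,v)]\mathcal{L}\bigr)=\mathsf{t}(g_1^{-1}g)=\mathsf{s}(g_1)$, which is the first axiom. The identity axiom follows from $\iota_{\mathsf{t}(g)}^{-1}g=g$, and associativity reduces to $(g_1g_2)^{-1}g=g_2^{-1}g_1^{-1}g$. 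The corresponding axioms for the left $\mathcal{H}$-action are even easier since left action there is literally left multiplication on the first coordinate.

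Finally I would verify the two biset compatibility conditions. That $\vartheta(xg)=\vartheta(x)$ and $\varsigma(hx)=\varsigma(x)$ is visible from the formulas, as $\lambda$ changes only the $h$-slot (and $\mathsf{t}(hh_1)=\mathsf{t}(h_1)$ only on the $\mathcal{H}$-side) while $\rho$ changes only the $g$-slot. Associativity $h(xg)=(hx)g$ follows since the left action touches the first coordinate and the right action touches the second coordinate, and these are independent in $\mathcal{H}\times\mathcal{G}$.

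The potentially subtle step is the well-definedness of $\rho$: one has to observe that although the right $\mathcal{G}$-action is defined using $g_1^{-1}$ acting on the second slot, the right $\mathcal{L}$-relation acts on the right of the pair $(h,g)$ by $(h_1,g_1) \in \mathcal{L}_{\scriptscriptstyle 1}$, so compatibility rests on the obvious identity $(g_1^{-1})(g g_1') = (g_1^{-1}g)g_1'$ in $\mathcal{G}$ together with the fact that left and right multiplications in $\mathcal{G}$ commute whenever the compositions are defined. No deep idea is needed; the main obstacle is bookkeeping of source/target constraints to make sure every composite arrow that appears is actually composable, which I would handle by always recording the fibered-product conditions explicitly.
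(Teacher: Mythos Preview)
Your proposal is correct and follows essentially the same approach as the paper: verify well-definedness of $\vartheta,\varsigma,\lambda,\rho$ on cosets, then check the biset axioms. The only cosmetic difference is that the paper packages the well-definedness check for $\lambda$ and $\rho$ into a single computation via Lemma~\ref{lEquLatLeft} (writing $(h_1h',g_1^{-1}g')^{-1}(h_1h,g_1^{-1}g)=(h'^{-1}h,g'^{-1}g)\in\mathcal{L}_{\scriptscriptstyle 1}$), whereas you argue separately that left multiplication on each slot commutes with the right $\mathcal{L}$-action; and the paper then leaves the biset axioms to the reader, while you spell them out.
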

\begin{proof}
Let us first check that  $\vartheta$ and $\varsigma$ are well defined maps. So given two representatives of the same equivalence class $[\left({h,g,u,v}\right)]\mathcal{L} = [\left({h',g',u',v'}\right)]\mathcal{L}$,  by Lemma \ref{lEquLatLeft}, we know that  $(h',g')^{-1} (h,g) \in\La$. From which one obtains  $t(h')=t(h)$ and  $t(g')=t(g)$. As for \(\lambda\) and \(\rho\),  
take \(h_1 \in \Ha \) and \(g_1 \in\Ga\) such that $\Sf{s}(h_{1}) =\vartheta\big( [(h,g,u,v)]\cL \big) = \Sf{t}(h)$,  $\Sf{t}(g_{1}) =\varsigma\big([(h,g,u,v)]\cL\big) = \Sf{t}(g)$ and  $[\left({h,g,u,v}\right)]\mathcal{L} = [\left({h',g',u',v'}\right)]\mathcal{L}$. Then, as before, we have 
$$
\begin{aligned}
 \mathcal{L}_{ \scriptscriptstyle{1} }  & \ni \left({h', g'}\right)^{-1} \left({h, g}\right) 
= \left({h'{}^{-1} h, g'{}^{-1} g}\right) 
= \left({h'{}^{-1} h_{1}^{-1} h_{1} h, g'{}^{-1} g_{1} g_{1}^{-1} g}\right)  
= \left({h_{1} h', g_{1}^{-1} g'}\right)^{-1} \left({h_{1} h, g_{1}^{-1} g}\right), 
\end{aligned}
$$
which also shows that 
$$
\Big[\left({h_{1} h, g_{1}^{-1} g, u, v}\right)\Big] \mathcal{L} \, =\, \Big[\left({h_{1}h', g_{1}^{-1} g', u', v'}\right)\Big] \mathcal{L}.
$$
Therefore,  \(\lambda\) and \(\rho\) are well defined.
The verification that \(\left(X, \vartheta, \varsigma\right)\) is a biset is now easy and is left to the reader.
\end{proof}

\begin{lemma}
\label{lStabSubGrpd}
Given two groupoids \(\mathcal{H}\) and \(\mathcal{G}\), let \(\left({X, \vartheta, \varsigma}\right)\) be an \(\left({\mathcal{H},\mathcal{G}}\right)\)-biset and take \(x \in X\).
We define
\begin{equation}
\label{eStabMor}
\left({\mathcal{L}_{ \scriptscriptstyle{x} }}\right)_{ \scriptscriptstyle{1} } = \LR{ \left({h,g}\right) \in \mathcal{H} \times \mathcal{G} |\; \mathsf{s}\left({h}\right)= \vartheta\left({x}\right), \; \mathsf{t}\left({g}\right)= \varsigma\left({x}\right), \; hx = x g  } 
\end{equation} 
and
\begin{equation}
\label{eStabObj}
\left({\mathcal{L}_{ \scriptscriptstyle{x} }}\right)_{ \scriptscriptstyle{0} } = \LR{\left( \vartheta\left({x}\right), \varsigma\left({x}\right) \right) }.
\end{equation} 
Then \(\mathcal{L}_{ \scriptscriptstyle{x} }\) is a subgroupoid of the groupoid \(\mathcal{H}\times \mathcal{G}\).
\end{lemma}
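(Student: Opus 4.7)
The plan is to verify that $\mathcal{L}_x$ satisfies all the defining conditions of a subgroupoid of $\mathcal{H}\times\mathcal{G}$: namely, that for every element of $(\mathcal{L}_x)_1$ both source and target land in $(\mathcal{L}_x)_0$, that $(\mathcal{L}_x)_0$ consists of well-defined objects of $\mathcal{H}\times\mathcal{G}$, and that $(\mathcal{L}_x)_1$ is closed under identities, composition, and inversion.

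First I would show that every pair $(h,g)\in(\mathcal{L}_x)_1$ is automatically a loop at $(\vartheta(x),\varsigma(x))$. Indeed, the definition already forces $\mathsf{s}(h)=\vartheta(x)$ and $\mathsf{t}(g)=\varsigma(x)$; to get the matching target/source it suffices to exploit the biset compatibility conditions from Definition~\ref{def:biset}. Applying $\vartheta$ to the equality $hx=xg$ gives $\mathsf{t}(h)=\vartheta(hx)=\vartheta(xg)=\vartheta(x)$, and applying $\varsigma$ gives $\mathsf{s}(g)=\varsigma(xg)=\varsigma(hx)=\varsigma(x)$. Hence $\mathsf{s}(h,g)=\mathsf{t}(h,g)=(\vartheta(x),\varsigma(x))\in(\mathcal{L}_x)_0$, so the source/target restrictions are well-posed.

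Next I would check the identity: for the sole object $(\vartheta(x),\varsigma(x))$, the arrow $(\iota_{\vartheta(x)},\iota_{\varsigma(x)})$ clearly satisfies $\iota_{\vartheta(x)}\,x=x=x\,\iota_{\varsigma(x)}$ by the axioms of a $\mathcal{G}$-set and an $\mathcal{H}$-set, so it lies in $(\mathcal{L}_x)_1$. For closure under multiplication, given $(h_1,g_1),(h_2,g_2)\in(\mathcal{L}_x)_1$, the previous step ensures they are composable loops, and the chain
\[
(h_1h_2)x=h_1(h_2x)=h_1(xg_2)=(h_1x)g_2=(xg_1)g_2=x(g_1g_2)
\]
uses only the associativity axioms of the left and right actions together with the biset compatibility; thus $(h_1h_2,g_1g_2)\in(\mathcal{L}_x)_1$. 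For inversion, starting from $hx=xg$ I would left-multiply by $h^{-1}$ and right-multiply by $g^{-1}$, using once more the action axioms, to obtain $h^{-1}x=xg^{-1}$; combined with $\mathsf{s}(h^{-1})=\mathsf{t}(h)=\vartheta(x)$ and $\mathsf{t}(g^{-1})=\mathsf{s}(g)=\varsigma(x)$, this puts $(h,g)^{-1}=(h^{-1},g^{-1})$ in $(\mathcal{L}_x)_1$.

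There is no real obstacle here: the only point requiring a moment of care is recognising that one needs the two biset compatibility conditions $\vartheta(xg)=\vartheta(x)$ and $\varsigma(hx)=\varsigma(x)$ (rather than the action axioms alone) in order to conclude that elements of $(\mathcal{L}_x)_1$ are loops and hence that $(\mathcal{L}_x)_0$ genuinely captures all source/target objects. Once this observation is made, every verification collapses to a one-line manipulation of the action equalities, and the inclusion $\mathcal{L}_x\hookrightarrow \mathcal{H}\times\mathcal{G}$ is just the restriction of the structural maps described in Example~\ref{exam:trivial}.
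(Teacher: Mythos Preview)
Your proposal is correct and follows essentially the same approach as the paper: the key step in both is using the biset compatibility conditions to show that every $(h,g)\in(\mathcal{L}_x)_1$ is a loop at $(\vartheta(x),\varsigma(x))$, after which $\mathcal{L}_x$ is recognised as a subgroup of the isotropy group $(\mathcal{H}\times\mathcal{G})^{(\vartheta(x),\varsigma(x))}$. The paper stops there and declares the result immediate, whereas you spell out the identity, composition, and inversion checks explicitly; your version is simply a more detailed rendering of the same argument.
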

\begin{proof}
It is immediate, since by using the first axiom of a biset, we know that, for every \(\left({h, g}\right) \in \left({\mathcal{L}_{ \scriptscriptstyle{x} }}\right)_{ \scriptscriptstyle{1} }\), we have
\[ 
\vartheta\left({x}\right)= \vartheta\left({xg}\right)
= \vartheta\left({hx}\right)= \mathsf{t}\left({h}\right)
\qquad \text{and} \qquad
\varsigma\left({x}\right) = \varsigma\left({hx}\right)
= \varsigma\left({xg}\right) 
= \mathsf{s}\left({g}\right) .
\]
Thus, $\cL_{\Sscript{x}}$ is a subgroup of the isotropy group $(\cH \times \cG)^{\Sscript{(\vartheta(x),\,  \varsigma(x))}}$ and then a subgroupoid with only one object $\{ (\vartheta(x),\,  \varsigma(x)) \}$.
\end{proof}

\begin{proposition}
\label{pIsomDueStab}
Given two groupoids \(\mathcal{H}\) and \(\mathcal{G}\), let \(X\) be an \(\left({\mathcal{H}, \mathcal{G}}\right)\)-biset and  take  \(x \in X\).
We define:
\[ \begin{aligned}
\left({\mathcal{L}_{ \scriptscriptstyle{x} }}\right)_{ \scriptscriptstyle{0} } & = \left({\mathcal{K}_{ \scriptscriptstyle{x} }}\right)_{ \scriptscriptstyle{0} } = \LR{\left({\vartheta\left({x}\right), \varsigma\left({x}\right)}\right)}, \\
\left({\mathcal{L}_{ \scriptscriptstyle{x} }}\right)_{ \scriptscriptstyle{1} } &= \LR{ \left({h,g}\right) \in \mathcal{H} \times \mathcal{G} | \; hx = xg, \quad \vartheta\left({x}\right)= \mathsf{s}\left({h}\right), \quad \mathsf{t}\left({g}\right)= \varsigma\left({x}\right)  } , \\
\left({\mathcal{K}_{ \scriptscriptstyle{x} }}\right)_{ \scriptscriptstyle{1} } &= \LR{ \left({h,g}\right) \in \mathcal{H} \times \mathcal{G} | \; hxg = x, \quad \vartheta\left({x}\right)= \mathsf{s}\left({h}\right), \quad \mathsf{t}\left({g}\right)= \varsigma\left({x}\right)  } , \\
\end{aligned}
\]
that is, \(\mathcal{K}=\Stabi_{ \scriptscriptstyle{\left({\mathcal{H}, \, \mathcal{G} }\right) }}\left({x}\right)\). 
Then
$$
\phiup_{ \scriptscriptstyle{x}} :   {\left({\frac{\mathcal{H} \times \mathcal{G}^{\Sscript{\text{op}}}}{\mathcal{K}_{ \scriptscriptstyle{x} } } }\right)^{\Sscript{\mathsf{L} }} } \longrightarrow { \left({ \frac{\mathcal{H} \times \mathcal{G}}{ \mathcal{L}_{ \scriptscriptstyle{x} } }  }\right)^{\Sscript{\mathsf{L}} } }, \qquad  \Big(  [(h,g,\vartheta(x), \varsigma(x))]\cK_{\scriptscriptstyle{x}}  \longmapsto    [(h,g^{-1},\vartheta(x), \varsigma(x))]\cL_{\scriptscriptstyle{x}}  \Big)
$$
is a well-defined isomorphism of \(\left({\mathcal{H}, \mathcal{G}}\right)\)-bisets with structure given as in Lemma \ref{lSubGrpdQuotL}.
\end{proposition}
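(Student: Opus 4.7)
The plan is to realize $\phi_{x}$ as the map on left cosets induced by an isomorphism of ambient groupoids, and then verify biset-equivariance by direct calculation.

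I would start by introducing the functor $\Phi \colon \mathcal{H} \times \mathcal{G}^{\text{op}} \to \mathcal{H} \times \mathcal{G}$ that is the identity on objects and sends an arrow $(h,g)$ to $(h, g^{-1})$. Using that composition in $\mathcal{G}^{\text{op}}$ is reversed, a direct check shows $\Phi$ is an isomorphism of groupoids. Moreover it carries $\mathcal{K}_{x}$ bijectively onto $\mathcal{L}_{x}$: the relation $hxg=x$ rewrites as $hx = xg^{-1}$, so $(h,g)\in\mathcal{K}_{x}$ if and only if $(h,g^{-1})\in\mathcal{L}_{x}$. Therefore $\Phi$ descends to a bijection of left cosets, which on representatives is exactly the formula defining $\phi_{x}$.

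To upgrade this observation to a rigorous well-definedness proof, I would apply Lemma~\ref{lEquLatLeft} in both ambient groupoids. The equality $[(h_{1},g_{1},\vartheta(x),\varsigma(x))]\mathcal{K}_{x} = [(h_{2},g_{2},\vartheta(x),\varsigma(x))]\mathcal{K}_{x}$ translates, in $\mathcal{H}\times\mathcal{G}^{\text{op}}$, to $(h_{2}^{-1}h_{1},\, g_{1}g_{2}^{-1}) \in \mathcal{K}_{x}$, i.e.\ $(h_{2}^{-1}h_{1})x = x(g_{2}g_{1}^{-1})$, which is exactly the condition $(h_{2}^{-1}h_{1},\, g_{2}g_{1}^{-1})\in \mathcal{L}_{x}$. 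Since $(h_{2},g_{2}^{-1})^{-1}(h_{1},g_{1}^{-1}) = (h_{2}^{-1}h_{1},\, g_{2}g_{1}^{-1})$ in $\mathcal{H}\times\mathcal{G}$, Lemma~\ref{lEquLatLeft} gives the equality of $\mathcal{L}_{x}$-cosets. The same argument run in reverse yields a well-defined inverse $\psi_{x}\bigl([(h,g,\vartheta(x),\varsigma(x))]\mathcal{L}_{x}\bigr) = [(h,g^{-1},\vartheta(x),\varsigma(x))]\mathcal{K}_{x}$, and the identity $(g^{-1})^{-1}=g$ shows that $\phi_{x}\circ\psi_{x}$ and $\psi_{x}\circ\phi_{x}$ are the identities on representatives.

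For the biset compatibility, the left $\mathcal{H}$-action of Lemma~\ref{lSubGrpdQuotL} touches only the $h$-coordinate and so commutes trivially with $\phi_{x}$. For the right $\mathcal{G}$-action, Lemma~\ref{lSubGrpdQuotL} gives $[(h,g,u,v)]\mathcal{L}_{x}\cdot g_{1} = [(h, g_{1}^{-1}g, u, v)]\mathcal{L}_{x}$, while the corresponding right $\mathcal{G}$-action on the $\mathcal{K}_{x}$-coset, viewed as an $(\mathcal{H},\mathcal{G})$-biset through the identification of Proposition~\ref{pBisetsLeftSets}, is $[(h,g,u,v)]\mathcal{K}_{x}\cdot g_{1} = [(h, gg_{1}, u, v)]\mathcal{K}_{x}$. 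Applying $\phi_{x}$ and using $(gg_{1})^{-1} = g_{1}^{-1}g^{-1}$ closes the square. The main obstacle is precisely this last step: a literal application of Lemma~\ref{lSubGrpdQuotL} yields only an $(\mathcal{H},\mathcal{G}^{\text{op}})$-biset structure on the $\mathcal{K}_{x}$-coset, and one must pass through the inversion isomorphism $\mathcal{G}\cong\mathcal{G}^{\text{op}}$ to reinterpret it as an $(\mathcal{H},\mathcal{G})$-biset before the equivariance calculation reduces to elementary bookkeeping.
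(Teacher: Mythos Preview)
Your proposal is correct and follows essentially the same route as the paper's proof: both verify well-definedness and injectivity via the chain of equivalences furnished by Lemma~\ref{lEquLatLeft} (the paper writes this as a sequence of ``if and only if'' steps, you package it as the groupoid isomorphism $\Phi$), both exhibit the obvious inverse for bijectivity, and both check $(\mathcal{H},\mathcal{G})$-equivariance by direct computation on representatives. Your framing through the functor $\Phi\colon \mathcal{H}\times\mathcal{G}^{\text{op}}\to\mathcal{H}\times\mathcal{G}$ is a clean conceptual addition that makes the correspondence $\mathcal{K}_{x}\leftrightarrow\mathcal{L}_{x}$ transparent; one small caveat is that your closing remark about the ``inversion isomorphism $\mathcal{G}\cong\mathcal{G}^{\text{op}}$'' is a slight misattribution --- the $(\mathcal{H},\mathcal{G})$-biset structure on the $\mathcal{K}_{x}$-coset really comes from its left $(\mathcal{H}\times\mathcal{G}^{\text{op}})$-set structure via Proposition~\ref{pBisetsLeftSets}, exactly as you correctly invoke earlier in the paragraph, not from restricting a $\mathcal{G}^{\text{op}}$-action along an inversion.
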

\begin{proof}
For each \(h',h \in \mathcal{H}_{ \scriptscriptstyle{1} }\) and \(g', g \in \mathcal{G}_{ \scriptscriptstyle{1} }\) with 
$\Sf{s}(h)=\vartheta(x)=\Sf{s}(h')$ and $\Sf{s}(g)=\varsigma(x)=\Sf{s}(g')$,
 we have
\[ 
[\left({h',g', \vartheta\left({x}\right), \varsigma\left({x}\right) }\right)] \mathcal{K}_{ \scriptscriptstyle{x} } = [\left({h, g^{-1}, \vartheta\left({x}\right), \varsigma\left({x}\right) }\right) ]\mathcal{K}_{ \scriptscriptstyle{x} } 
\]
if and only if
\[ \left({h^{-1} h', g' g^{-1}}\right) = \left({h^{-1} h' , g^{-1} \overset{\Sscript{\text{op}}}{\cdot} g' }\right) 
= \left({h, g}\right)^{-1} \left({h', g'}\right) \in \left({\mathcal{K}_{ \scriptscriptstyle{x} } }\right)_{ \scriptscriptstyle{1} } ,
\]
if only if \(h^{-1} bxag^{-1} = x\), if and only if \(h^{-1} h' x = x g^{-1}\), if and only if
\[ \left({h, g^{-1}}\right)^{-1} \left({h', g'{}^{-1}}\right) = \left({h^{-1} h', gg'{}^{-1}}\right) \in \mathcal{L}_{ \scriptscriptstyle{x} } ,
\]
if and only if
\[ [\left({h',g'{}^{-1}, \vartheta\left({x}\right), \varsigma\left({x}\right) }\right)] \mathcal{K}_{ \scriptscriptstyle{x} } = [\left({h, g^{-1}, \vartheta\left({x}\right), \varsigma\left({x}\right) }\right)] \mathcal{K}_{ \scriptscriptstyle{x} } .
\]
Therefore \(\phiup_{ \scriptscriptstyle{x} }\) is well defined and injective.
For each \(h' \in \mathcal{H}_{ \scriptscriptstyle{1} }\) and \(g' \in \mathcal{G}_{ \scriptscriptstyle{1} }\) we have
\[ \phiup_{ \scriptscriptstyle{x} } \left({[\left({h', g'{}^{-1}, \vartheta\left({x}\right), \varsigma\left({x}\right) }\right)] \mathcal{K}_{ \scriptscriptstyle{x} } }\right) = [\left({h', g', \vartheta\left({x}\right), \varsigma\left({x}\right) }\right)] \mathcal{L}_{ \scriptscriptstyle{x} } 
\]
hence \(\phiup_{ \scriptscriptstyle{x} } \) is surjective.

Now given \(y =[\left({h',g', \vartheta\left({x}\right), \varsigma\left({x}\right) }\right)] \mathcal{K}_{ \scriptscriptstyle{x} } \in  \left({ \frac{\mathcal{H} \, \times \,  \mathcal{G}^{\Sscript{\text{op}}}}{\mathcal{K}_{ \scriptscriptstyle{x} } } }\right)^{\Sscript{\mathsf{L}} }\), \(h \in \mathcal{H}_{ \scriptscriptstyle{1} }\) and \(g \in \mathcal{G}_{ \scriptscriptstyle{1} }\) such that \(\mathsf{s}\left({h}\right)= \vartheta\left({y}\right)\) and \(\varsigma\left({y}\right)= \mathsf{t}\left({g}\right)\) we have
\[ \begin{aligned}
\phiup_{ \scriptscriptstyle{x} }\left({hyg}\right) &= \varphi_{ \scriptscriptstyle{x} }\left({[\left({hh', g \overset{\Sscript{\text{op}}}{\cdot} g', \vartheta\left({x}\right), \varsigma\left({x}\right)] }\right) \mathcal{K}_{ \scriptscriptstyle{x} } }\right) 
= \phiup_{ \scriptscriptstyle{x} }\left({[\left({hh', g'g , \vartheta\left({x}\right), \varsigma\left({x}\right) ]}\right) \mathcal{K}_{ \scriptscriptstyle{x} } }\right) \\
&= [\left({hh', g^{-1} g'{}^{-1} , \vartheta\left({x}\right), \varsigma\left({x}\right) }\right)]\mathcal{L}_{ \scriptscriptstyle{x} } 
= h\left({[\left({h', g'{}^{-1}, \vartheta\left({x}\right), \varsigma\left({x}\right) }\right)]\mathcal{L}_{ \scriptscriptstyle{x} } }\right)g
= h \phiup_{ \scriptscriptstyle{x} }\left({y}\right) g
\end{aligned}
\]
thus \(\phiup_{ \scriptscriptstyle{x} }\) is an isomorphism of \(\left({\mathcal{H},\mathcal{G}}\right)\)-bisets as stated. 
\end{proof}

\begin{definition}
\label{dGrpdStarGrpd}
Given groupoids \(\mathcal{G}\), \(\mathcal{H}\) and \(\mathcal{K}\), let \(\mathcal{L}\) be a subgroupoid of \(\mathcal{H} \times \mathcal{G}\) and \(\mathcal{M}\) be a subgroupoid of \(\mathcal{K} \times \mathcal{H}\).
We define
\[ \left({\mathcal{M} \ast \mathcal{L} }\right)_{ \scriptscriptstyle{1} } = \LR{ \left({k,g}\right) \in \mathcal{K}_{ \scriptscriptstyle{1} } \times \mathcal{G}_{ \scriptscriptstyle{1} }  | \; \exists \, h \in \mathcal{H}_{ \scriptscriptstyle{1} } \; \text{ such that }\;  \left({k,h}\right) \in \mathcal{M}_{ \scriptscriptstyle{1} } \; \text{ and }\;  \left({h,g}\right) \in \mathcal{L}_{ \scriptscriptstyle{1} }  } 
\]
and
\[ \left({\mathcal{M} \ast \mathcal{L} }\right)_{ \scriptscriptstyle{0} } = \LR{ \left({v,a}\right) \in \mathcal{K}_{ \scriptscriptstyle{0} } \times \mathcal{G}_{ \scriptscriptstyle{0} }  | \;  \exists  \, u \in \mathcal{H}_{ \scriptscriptstyle{0} } \; \text{ such that }\; \left({v,u}\right) \in \mathcal{M}_{ \scriptscriptstyle{0} }  \; \text{ and }\;  \left({u,a}\right) \in \mathcal{L}_{ \scriptscriptstyle{0} }  } .
\]
Notice that, if $pr_{\Sscript{2}}(\cM) \cap pr_{\Sscript{1}}(\cL) \,=\,  \emptyset $, where $pr_{\Sscript{1}}$ and $pr_{\Sscript{2}}$ are the first and second projections, then $(\cM * \cL)_{\Sscript{0}}$ is obviously an  empty set. 
\end{definition}

\begin{lemma}
\label{lGrpdStarGrpd}
Given groupoids \(\mathcal{G}\), \(\mathcal{H}\) and \(\mathcal{K}\) such that \(\mathcal{H}\) has only one object, let \(\mathcal{L}\) be a subgroupoid of \(\mathcal{H} \times \mathcal{G}\) and \(\mathcal{M}\) be a subgroupoid of \(\mathcal{K} \times \mathcal{H}\).
Then \(\mathcal{M} \ast \mathcal{L}\), as defined in Definition~\(\ref{dGrpdStarGrpd}\), is a subgroupoid of \(\mathcal{K} \times \mathcal{G}\).
\end{lemma}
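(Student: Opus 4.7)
The plan is to verify directly the five defining properties of a subgroupoid of $\mathcal{K} \times \mathcal{G}$: containment of $(\mathcal{M} \ast \mathcal{L})_0$ in $\mathcal{K}_0 \times \mathcal{G}_0$ and of $(\mathcal{M} \ast \mathcal{L})_1$ in $\mathcal{K}_1 \times \mathcal{G}_1$ (both immediate from Definition \ref{dGrpdStarGrpd}), compatibility with source and target, closure under identity arrows, closure under inverses, and closure under multiplication. The central idea is that for each element $(k,g) \in (\mathcal{M} \ast \mathcal{L})_1$ I will carry along an explicit witness $h \in \mathcal{H}_1$ satisfying $(k,h) \in \mathcal{M}_1$ and $(h,g) \in \mathcal{L}_1$, and will manufacture witnesses for all the other groupoid operations by performing the corresponding operation on $h$ inside the groupoid $\mathcal{H}$.

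For source/target compatibility, if $h$ witnesses $(k,g)$, I take $\mathsf{s}(h)$ as the object-witness for $\mathsf{s}(k,g) = (\mathsf{s}(k), \mathsf{s}(g))$, using that $\mathsf{s}(k,h) \in \mathcal{M}_0$ and $\mathsf{s}(h,g) \in \mathcal{L}_0$ since $\mathcal{M}$ and $\mathcal{L}$ are subgroupoids; the target case is dual. For identities, given $(v,a) \in (\mathcal{M} \ast \mathcal{L})_0$ with object-witness $u$, the identity arrow $\iota_u$ witnesses $\iota_{(v,a)} = (\iota_v, \iota_a) \in (\mathcal{M} \ast \mathcal{L})_1$, because $\iota_{(v,u)} \in \mathcal{M}_1$ and $\iota_{(u,a)} \in \mathcal{L}_1$ by the subgroupoid property. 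For inverses, if $h$ witnesses $(k,g)$, then $h^{-1}$ witnesses $(k^{-1}, g^{-1})$, since $\mathcal{M}$ and $\mathcal{L}$ are closed under inversion.

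The main step, and the only one that actually requires the hypothesis that $\mathcal{H}$ has only one object, is closure under multiplication. Suppose $(k,g)$ and $(k',g')$ are composable elements of $(\mathcal{M} \ast \mathcal{L})_1$, so $\mathsf{s}(k) = \mathsf{t}(k')$ and $\mathsf{s}(g) = \mathsf{t}(g')$, with respective witnesses $h, h' \in \mathcal{H}_1$. The obstacle that would otherwise arise is that, to multiply $(k,h)$ by $(k',h')$ in $\mathcal{M}$, one needs $\mathsf{s}(h) = \mathsf{t}(h')$, and in general there is no reason for this matching to hold; the same issue prevents the composition of $(h,g)$ and $(h',g')$ in $\mathcal{L}$. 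Here is exactly where the assumption $|\mathcal{H}_0|=1$ intervenes: every arrow of $\mathcal{H}$ has the same (unique) source and target, so $hh'$ is automatically defined in $\mathcal{H}$. Then $(k,h)(k',h') = (kk', hh') \in \mathcal{M}_1$ and $(h,g)(h',g') = (hh', gg') \in \mathcal{L}_1$, exhibiting $hh'$ as a witness for $(kk', gg') = (k,g)(k',g')$, which therefore lies in $(\mathcal{M} \ast \mathcal{L})_1$. Assembling these verifications yields the stated result.
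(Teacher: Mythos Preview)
Your proof is correct and follows essentially the same approach as the paper's. The paper's proof is terser, explicitly verifying only closure under inverses and under multiplication (with the same witness-tracking argument and the same use of $|\mathcal{H}_0|=1$ to ensure $h_1 h_2$ is defined), while you additionally spell out source/target compatibility and closure under identities; these extra checks are routine and do not change the argument.
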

\begin{proof}
Given  \(\left({k, g }\right) \in \left({\mathcal{M} \ast \mathcal{L}}\right)_{ \scriptscriptstyle{1} }\), then there is \(h \in \mathcal{H}_{ \scriptscriptstyle{1} }\) such that \(\left({k, h}\right) \in \mathcal{M}_{ \scriptscriptstyle{1} }\) and \(\left({h,g}\right) \in \mathcal{L}_{ \scriptscriptstyle{1} }\) so \(\left({k^{-1}, h^{-1}}\right) \in \mathcal{M}_{ \scriptscriptstyle{1} }\) and \(\left({h^{-1}, g^{-1}}\right) \in \mathcal{L}_{ \scriptscriptstyle{1} }\) thus \(\left({k, g}\right)^{-1} \in \left({\mathcal{M} \ast \mathcal{L}}\right)_{ \scriptscriptstyle{1} }\).
Now let be \(\left({k_1, g_1}\right), \left({k_2, g_2}\right) \in \left({\mathcal{M} \ast \mathcal{L}}\right)_{ \scriptscriptstyle{1} }\) such that \(\mathsf{s}\left({k_1, g_1}\right) = \mathsf{t}\left({k_2, g_2}\right)\).
There are \(h_1, h_2 \in \mathcal{H}_{ \scriptscriptstyle{1} }\) such that \(\left({k_i, h_i}\right) \in \mathcal{M}_{ \scriptscriptstyle{1} }\) and \(\left({h_i, g_i}\right) \in \mathcal{L}_{ \scriptscriptstyle{1} }\) for each \({i}\in \Set{{1},{2}}\).
Since \(\mathcal{H}\) has only one object we have \(\mathsf{s}\left({h_1}\right)= \mathsf{t}\left({h_2}\right)\) thus we can write \(h_1 h_2\) and we have
\[ \left({k_1, h_1}\right) \left({k_2, h_2}\right) = \left({k_1 k_2, h_1 h_2}\right) \in \mathcal{M}_{ \scriptscriptstyle{1} },
 \quad
\left({h_1, g_1}\right) \left({h_2, g_2}\right) = \left({h_1 h_2, g_1 g_2}\right) \in \mathcal{L}_{ \scriptscriptstyle{1} } .
\]
Therefore
\( \left({k_1, g_1}\right) \left({k_2, g_2}\right) = \left({k_1 k_2, g_1 g_2}\right) \in \left({\mathcal{M} \ast \mathcal{L} }\right)_{ \scriptscriptstyle{1} }
\), and this completes the proof.
\end{proof}

\begin{example}
Given groupoids \(\mathcal{K}\), \(\mathcal{H}\) and \(\mathcal{G}\) such that \(\mathcal{H}\) has only one object \(\omega\), we consider  subgroupoids \(\mathcal{D} \le \mathcal{K}\), \(\mathcal{C} \le \mathcal{H}\), \(\mathcal{B} \le \mathcal{H}\) and \(\mathcal{A}\le \mathcal{G}\) where \(\mathcal{C}\) and \(\mathcal{B}\) are not empty, that is, have exactly the object \(\omega\).
Then \(\mathcal{M}= \mathcal{D} \times \mathcal{C}\) is a subgroupoid of \(\mathcal{K} \times \mathcal{H}\) and \(\mathcal{L}= \mathcal{B} \times \mathcal{A}\) is a subgroupoid of \(\mathcal{H} \times \mathcal{G}\).
For each \(d_0 \in \mathcal{D}_{ \scriptscriptstyle{0} }\) and \(a_0 \in \mathcal{A}_{ \scriptscriptstyle{0} }\) we have \(\left({d_0, \omega}\right) \in \mathcal{M}_{ \scriptscriptstyle{0} }\) and \(\left({\omega, a_0}\right) \in \mathcal{L}_{ \scriptscriptstyle{0} }\) thus \(\left({d_0, a_0}\right) \in \left({\mathcal{M} \ast \mathcal{L}}\right)_{ \scriptscriptstyle{0} }\).
We have \(\iota_{ \scriptscriptstyle{\omega} } \in \mathcal{C}_{ \scriptscriptstyle{1} } \cap \mathcal{B}_{ \scriptscriptstyle{1} }\) so for each \(d_1 \in \mathcal{D}_{ \scriptscriptstyle{1} }\) and \(a_1 \in \mathcal{A}_{ \scriptscriptstyle{1} }\) we have \(\left({d_1, \iota_{ \scriptscriptstyle{\omega} } }\right) \in \mathcal{M}_{ \scriptscriptstyle{1} }\) and \(\left({\iota_{ \scriptscriptstyle{\omega} }, a_1}\right) \in \mathcal{L}_{ \scriptscriptstyle{1} }\) therefore \(\left({d_1, a_1}\right) \in \left({\mathcal{M} \ast \mathcal{L} }\right)_{ \scriptscriptstyle{1} }\).
As a consequence we have \(\mathcal{D}_{ \scriptscriptstyle{i} } \times \mathcal{A}_{ \scriptscriptstyle{i} } \subseteq \left({\mathcal{M} \ast \mathcal{L} }\right)_{ \scriptscriptstyle{i} }\) for \(i=0\) and \(i=1\).
For each \({i}\in \Set{{0},{1}}\) and for each \(\left({k_i, g_i}\right) \in \left({\mathcal{M} \ast \mathcal{L} }\right)_{ \scriptscriptstyle{i} }\) there is \(h_i \in \mathcal{H}_{ \scriptscriptstyle{i} }\) such that \(\left({k_i, h_i}\right) \in \mathcal{M}_{ \scriptscriptstyle{i} } = \mathcal{D}_{ \scriptscriptstyle{i} } \times \mathcal{C}_{ \scriptscriptstyle{i} }\) and \(\left({h_i, g_i}\right) \in \mathcal{L}_{ \scriptscriptstyle{i} } = \mathcal{B}_{ \scriptscriptstyle{i} } \times \mathcal{A}_{ \scriptscriptstyle{i} }\), thus \(k_i \in \mathcal{D}_{ \scriptscriptstyle{i} }\) and \(g_i \in \mathcal{A}_{ \scriptscriptstyle{i} }\), therefore \(\left({k_i, g_i}\right) \in \mathcal{D}_{ \scriptscriptstyle{i} } \times \mathcal{A}_{ \scriptscriptstyle{i} }\) and \(\left({\mathcal{M} \ast \mathcal{L}}\right)_{ \scriptscriptstyle{i} } \subseteq \mathcal{D}_{ \scriptscriptstyle{i} } \times \mathcal{A}_{ \scriptscriptstyle{i} }\).
This shows that \(\mathcal{M} \ast \mathcal{L} = \mathcal{D} \times \mathcal{A}\)  is not an empty groupoid if both \(\mathcal{D}\) and \(\mathcal{A}\) are not so.
\end{example}

\begin{proposition}
\label{pModifDoubleCosets}
Given groupoids \(\mathcal{K}\), \(\mathcal{H}\) and \(\mathcal{G}\), let \(\mathcal{M}\) be a subgroupoid of \(\mathcal{K} \times \mathcal{H}\) and \(\mathcal{L}\) be a subgroupoid of \(\mathcal{H} \times \mathcal{G}\).
Let be
\begin{equation}\label{Eq:Xx} 
X= \Set{ \left({w, u, h, v, a}\right) \in \mathcal{K}_{ \scriptscriptstyle{0} } \times \mathcal{H}_{ \scriptscriptstyle{0} } \times \mathcal{H}_{ \scriptscriptstyle{1} } \times \mathcal{H}_{ \scriptscriptstyle{0} } \times \mathcal{G}_{ \scriptscriptstyle{0} }  |  \begin{gathered}
\left({w, u}\right) \in \mathcal{M}_{ \scriptscriptstyle{0} }, \left({v, a}\right) \in \mathcal{L}_{ \scriptscriptstyle{0} }, \\
 u = \mathsf{t}\left({h}\right), v= \mathsf{s}\left({h}\right) 
\end{gathered} }.
\end{equation}
Then \(X\) is a \(\left({\mathcal{M}, \mathcal{L}}\right)\)-biset with structure maps
\[ \begin{aligned}{\vartheta}  \colon & {X} \longrightarrow {\mathcal{M}_{ \scriptscriptstyle{0} } } \\ & {\left({w, u, h, v, a}\right) }  \longmapsto {\left({w, u}\right) }\end{aligned} 
\qquad \text{and} \qquad
\begin{aligned}{\varsigma}  \colon & {X} \longrightarrow {\mathcal{L}_{ \scriptscriptstyle{0} } } \\ & {\left({w, u, h, v, a}\right) }  \longmapsto {\left({v, a}\right), }\end{aligned} 
\]
left action
\[ \begin{aligned}{\lambda}  \colon & {\mathcal{M}_{ \scriptscriptstyle{1} }  {\, {}_{ \scriptscriptstyle{\mathsf{s} } } {\times}}_{ \scriptscriptstyle{\vartheta} }\, X} \longrightarrow {X} \\ & {\Big( ({k, h'), \left({w, u, h, v, a}\right) }  \Big) }  \longmapsto {\left({\mathsf{t}\left({k}\right), \mathsf{t}\left({h'}\right), h' h, v, a}\right) }\end{aligned} 
\]
and right action
\[ \begin{aligned}{\rho}  \colon & {X  {{}_{ \scriptscriptstyle{\varsigma} } {\times}}_{ \scriptscriptstyle{\mathsf{t} } }\, \mathcal{L}_{ \scriptscriptstyle{1} } } \longrightarrow {X} \\ &  \Big( { {\left({w, u, h, v, a}\right), (h'', g)} } \Big)  \longmapsto  {\left({w, u, h h'', \mathsf{s}\left({h''}\right), \mathsf{s}\left({g}\right) }\right).}\end{aligned} 
\]
\end{proposition}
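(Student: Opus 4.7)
The proof is essentially a bookkeeping verification of Definition~\ref{def:biset}, analogous to (and simpler than) the verification already carried out in the proof of Proposition~\ref{prop:Doublecoset}. My plan is to split the verification into four blocks: (i) well-definedness of the two action maps (that is, their values lie in $X$), (ii) the three right $\mathcal{L}$-set axioms, (iii) the three left $\mathcal{M}$-set axioms, and (iv) the two compatibility conditions relating left and right actions.

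For (i), the right action is well-defined because for $(h'',g)\in\mathcal{L}_1$ with $\mathsf{t}(h'',g)=(v,a)$, we have $\mathsf{t}(h'')=v=\mathsf{s}(h)$ so the composite $h h''\in\mathcal{H}_1$ exists; moreover $(\mathsf{s}(h''),\mathsf{s}(g))=\mathsf{s}(h'',g)\in\mathcal{L}_0$, and the tuple $(w,u,hh'',\mathsf{s}(h''),\mathsf{s}(g))$ satisfies $\mathsf{t}(hh'')=\mathsf{t}(h)=u$ and $\mathsf{s}(hh'')=\mathsf{s}(h'')$ as required. Symmetrically, for $(k,h')\in\mathcal{M}_1$ with $\mathsf{s}(k,h')=(w,u)$, we have $\mathsf{s}(h')=u=\mathsf{t}(h)$ so $h'h\in\mathcal{H}_1$ exists; moreover $(\mathsf{t}(k),\mathsf{t}(h'))=\mathsf{t}(k,h')\in\mathcal{M}_0$, and $(\mathsf{t}(k),\mathsf{t}(h'),h'h,v,a)$ fits the defining constraints of $X$.

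For (ii), checking directly: $\varsigma\bigl((w,u,h,v,a)(h'',g)\bigr)=\varsigma(w,u,hh'',\mathsf{s}(h''),\mathsf{s}(g))=(\mathsf{s}(h''),\mathsf{s}(g))=\mathsf{s}(h'',g)$; the identity axiom follows from $\iota_{(v,a)}=(\iota_v,\iota_a)$, so the action by the identity gives $(w,u,h\iota_v,v,a)=(w,u,h,v,a)$; associativity reduces to the associativity of multiplication in $\mathcal{H}$ and $\mathcal{G}$ coordinate-wise, since the third and fourth/fifth components only depend on products of $h$-arrows and of $g$-arrows. Block (iii) is entirely symmetric. For (iv) the two equalities $\vartheta(xg)=\vartheta(x)$ and $\varsigma(kx)=\varsigma(x)$ are immediate because the right action keeps the first two components fixed and the left action keeps the last two components fixed; finally the associativity $k(xg)=(kx)g$ reads
\[
(k,h')\bigl((w,u,h,v,a)(h'',g)\bigr)=(\mathsf{t}(k),\mathsf{t}(h'),h'(hh''),\mathsf{s}(h''),\mathsf{s}(g))
\]
and
\[
\bigl((k,h')(w,u,h,v,a)\bigr)(h'',g)=(\mathsf{t}(k),\mathsf{t}(h'),(h'h)h'',\mathsf{s}(h''),\mathsf{s}(g)),
\]
which agree by associativity in $\mathcal{H}$.

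There is no serious obstacle in this proof; the only point requiring care is bookkeeping the source/target conditions needed to compose the $\mathcal{H}$-arrows in the third coordinate. Accordingly I would just present the computations block-by-block in the order above and leave the symmetric left-action verification to the reader, as the authors do throughout the earlier parts of the paper.
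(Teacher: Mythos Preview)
Your proof is correct and follows essentially the same approach as the paper: the authors verify the three right $\mathcal{L}$-set axioms explicitly, leave the left $\mathcal{M}$-action to symmetry, and then check the biset compatibility conditions by direct computation, exactly as you outline. Your block (i) on well-definedness of the action maps is a small addition the paper leaves implicit, but this does not change the route.
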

\begin{proof}
We only check the properties of a right action, since a similar proof shows the left action properties.
\begin{enumerate}
\item For each \(y = \left({w, u, h, v, a}\right) \in X\) and \(\left({h'', g}\right) \in \mathcal{L}_{ \scriptscriptstyle{1} } \) such that \(\varsigma\left({y}\right)= \mathsf{t}\left({h'', g}\right)\) we have
\[ \varsigma\left({y \left({h'', g}\right) }\right) = \varsigma\left({w, u, h h'', \mathsf{s}\left({h''}\right), \mathsf{s}\left({g}\right) }\right) 
= \left({\mathsf{s}\left({h''}\right), \mathsf{s}\left({g}\right) }\right) 
= \mathsf{s}\left({h'', g}\right) 
\]
\item For each \(y = \left({w, u, h, v, a}\right) \in X\) we have
\[ y \iota_{ \scriptscriptstyle{\varsigma\left({y}\right) } } = y \iota_{ \scriptscriptstyle{\left({v, a}\right) } } 
=  y \left({\iota_{ \scriptscriptstyle{v} }, \iota_{ \scriptscriptstyle{a} } }\right) 
= \left({w, u, h\,  \iota_{ \scriptscriptstyle{v} }, \mathsf{s}({\iota_{ \scriptscriptstyle{v} } }), \mathsf{s}({\iota_{ \scriptscriptstyle{a} } }) }\right) 
= y.
\]
\item For each \(y = \left({w, u, h, v, a}\right) \in X\) and \(\left({h_1, g}\right), \left({h_2, g'}\right) \in \mathcal{L}_{ \scriptscriptstyle{1} }\) such that \(\varsigma\left({y}\right)= \mathsf{t}\left({h_1, g}\right)\) and \(\mathsf{s}\left({h_1, g}\right) = \mathsf{t}\left({h_2, g'}\right)\) we have
\[ \begin{gathered}
\left({y \left({h_1, g}\right) }\right) \left({h_2, g'}\right) = \left({w, u, h h_1, \mathsf{s}\left({h_1}\right), \mathsf{s}\left({g}\right) }\right) \left({h_2, g'}\right)  
= \left({w, u, h h_1 h_2, \mathsf{s}\left({h_2}\right), \mathsf{s}\left({g'}\right) }\right) \\  
= \left({w, u, h h_1 h_2, \mathsf{s}\left({h_1 h_2}\right), \mathsf{s}\left({g g'}\right) }\right) 
= y \left({h_1 h_2, g, g'}\right)
= y \left({\left({h_1, g}\right). \left({h_2, g'}\right) }\right). 
\end{gathered}
\]
\end{enumerate}

Now we have to check the properties of a biset on $X$, that is, condition (2) in Definition \ref{def:biset} for the stated actions  $\lambda$ and $\rho$.
So, for each \(y = \left({w, u, h, v, a}\right) \in X\), \(\left({k, h'}\right) \in \mathcal{M}_{ \scriptscriptstyle{1} }\) and \(\left({h'', g}\right) \in \mathcal{L}_{ \scriptscriptstyle{1} }\) such that \(\mathsf{s}\left({k, h'}\right)= \vartheta\left({y}\right)\) and \(\varsigma\left({y}\right)= \mathsf{t}\left({h'', g}\right)\) we have
\[ \begin{gathered}
\vartheta\left({ y \left({h'', g}\right) }\right) = \vartheta\left({w, u, h h'', \mathsf{s}\left({h''}\right), \mathsf{s}\left({g}\right) }\right)
= \left({w, u}\right) 
= \vartheta\left({y}\right) , \\
\varsigma\left({\left({k, h'}\right) y}\right) = \varsigma\left({\mathsf{t}\left({k}\right), \mathsf{t}\left({h'}\right), h' h, v, a}\right)
= \left({v, a}\right) 
= \varsigma\left({y}\right)
\end{gathered}
\]
and
\[ \begin{gathered}
\left({k, h'}\right) \left({y \left({h'', g}\right) }\right) = \left({k, h'}\right) \left({w, u, h h'', \mathsf{s}\left({h''}\right), \mathsf{s}\left({g}\right) }\right)  \\
= \left({\mathsf{t}\left({k}\right), \mathsf{t}\left({h'}\right), h' h h'', \mathsf{s}\left({h''}\right), \mathsf{s}\left({g}\right) }\right) 
= \left({ \mathsf{t}\left({k}\right), \mathsf{t}\left({h'}\right), h' h, v, a}\right) \left({h'', g}\right)  \\
= \left({\left({k, h'}\right) y}\right) \left({h'', g}\right), 
\end{gathered}
\] 
and this gives the desired properties, which finishes the proof.
\end{proof}

\subsection{The main result}\label{ssec:Mackey}

Let's keep the notations of Proposition \ref{pModifDoubleCosets} and let's assume we are given \(w \in \mathcal{K}_{ \scriptscriptstyle{0} }\), \(u \in \mathcal{H}_{ \scriptscriptstyle{0} }\) and \(a \in \mathcal{G}_{ \scriptscriptstyle{0} }\) such that \(\left({w, u}\right) \in \mathcal{M}_{ \scriptscriptstyle{0} }\) and \(\left({u, a}\right) \in \mathcal{L}_{ \scriptscriptstyle{0} }\).
Under these assumptions,  we can apply Lemma \ref{lGrpdStarGrpd} to the groupoids \(\mathcal{K}^{\Sscript{\left({w, \,  w}\right)}}\), \(\mathcal{H}^{\Sscript{\left({u,\,  u}\right)}}\) and \(\mathcal{G}^{\Sscript{\left({a, \, a}\right)}}\) by taking the  subgroupoids \(\mathcal{M}^{\Sscript{\left({w,\,  u}\right)}}\) of \(\mathcal{K}^{\Sscript{\left({w, \, w}\right)}} \times \mathcal{H}^{\Sscript{\left({u,\,  u}\right)}}\) and \(\mathcal{L}^{\Sscript{\left({u, \, a}\right)}}\) of \(\mathcal{H}^{\Sscript{\left({u, \, u}\right)}} \times \mathcal{G}^{\Sscript{\left({a, \, a}\right)}}\). Of course, here  we are identifying the isotropy groups \(\mathcal{M}^{\Sscript{( w, \, u)}}\)  and \(\mathcal{L}^{\Sscript{( u, \, a)}}\) with groupoids having only one object \(\left({w, u}\right)\) and \(\left({u, a}\right)\), respectively. In this way, we obtain that
\begin{equation}\label{Eq:ML}
\mathcal{M}^{\Sscript{\left( w, \, u\right)}} \ast \left({ {}^{\left({h,\,  \iota_{ \scriptscriptstyle{a} } }\right) } \mathcal{L}^{\Sscript{(u,\, a)}}  }\right) 
\end{equation}
is a subgroupoid of \(\mathcal{K}^{\Sscript{(w,\,  w)}} \times \mathcal{G}^{\Sscript{(a,\,  a)}}\),  for every  $(h, \iota_{\Sscript{a}}) \in \La$ with $\Sf{s}(h)=\Sf{t}(h)=u$, and   where we have used the notation \({ {}^{g} } G = gGg^{-1}\) to denote the conjugation class of a given element $g$ in a group $G$. 
Since we know that  \(\mathcal{K}^{\Sscript{(w,\,  w)}} \times \mathcal{G}^{\Sscript{(a,\,  a)}}\)  is a subgroupoid of \(\mathcal{K} \times \mathcal{G}\), we have  that  $
\mathcal{M}^{\Sscript{\left( w, \, u\right)}} \ast \left({ {}^{\left({h,\,  \iota_{ \scriptscriptstyle{a} } }\right) } \mathcal{L}^{\Sscript{(u,\, a)}}  }\right) 
$
is a subgroupoid of \(\mathcal{K} \times \mathcal{G}\). This will be used implicitly in the sequel. 

Our main result is the following. 

\begin{theorem}[Mackey formula for bisets]\label{thm:Main}
Let $\cK$, $\cH$, $\cG$, $\cM$ and $\cL$ be as in  Proposition \ref{pModifDoubleCosets}. Consider the biset $X$ defined in equation \eqref{Eq:Xx} and the subgroupoids $
\mathcal{M}^{\Sscript{\left( w, \, u\right)}} \ast \left({ {}^{\left({h,\,  \iota_{ \scriptscriptstyle{a} } }\right) } \mathcal{L}^{\Sscript{(u,\, a)}}  }\right) 
$ of equation \eqref{Eq:ML}. 
Assume that \(\mathcal{M}_{ \scriptscriptstyle{0} }= \mathcal{K}_{ \scriptscriptstyle{0} } \times \mathcal{H}_{ \scriptscriptstyle{0} }\) and \(\mathcal{L}_{ \scriptscriptstyle{0} } = \mathcal{H}_{ \scriptscriptstyle{0} } \times \mathcal{G}_{ \scriptscriptstyle{0} }\), then there is  a (non canonical) isomorphism of bisets
\begin{equation}
\label{eMackeyFormulaBisets}
  \begin{gathered}
\left({\frac{\mathcal{K} \times \mathcal{H} }{\mathcal{M} } }\right)^{\Sscript{\mathsf{L} }} \otimes_{ \scriptscriptstyle{\mathcal{H} } } \left({\frac{\mathcal{H} \times \mathcal{G} }{\mathcal{L} } }\right)^{\Sscript{\mathsf{L}} } 
\cong   \biguplus_{\left({w,\, u,\, h, \,v,\, a}\right) \,  \in  \,  \rep_{\scriptscriptstyle{\left(\mathcal{M}, \,  \mathcal{L}\right)}}(X)    } \left({\frac{\mathcal{K} \times \mathcal{G} }{\mathcal{M}^{\Sscript{( w, \, u)}}  \ast \left({ ^{\left({h, \, \iota_{ \scriptscriptstyle{a} } }\right) } \mathcal{L}^{\Sscript{( u, \, a)}} }\right)  } }\right)^{\Sscript{\mathsf{L}} },
\end{gathered}
\end{equation} 
where 
 \(\rep_{ \scriptscriptstyle{\left(\mathcal{M},\,  \mathcal{L}\right)}}(X)\) is a set of representatives of the orbits of $X$ as \(\left({\mathcal{M}, \mathcal{L}}\right)\)-biset.
\end{theorem}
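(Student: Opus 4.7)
My plan is to construct an explicit $(\mathcal{K},\mathcal{G})$-equivariant bijection $\Phi$ from the coproduct on the right-hand side of \eqref{eMackeyFormulaBisets} to the tensor product on the left, defined summand by summand. For each representative $\bar x=(w,u,h,v,a)\in\rep_{(\mathcal{M},\mathcal{L})}(X)$, writing $\mathcal{N}_{\bar x}=\mathcal{M}^{(w,u)}\ast({}^{(h,\iota_a)}\mathcal{L}^{(u,a)})$, I set
$$\Phi_{\bar x}\Big([(k,g,w,a)]\mathcal{N}_{\bar x}\Big)=[(k,\iota_u,w,u)]\mathcal{M}\otimes_{\mathcal{H}}[(h,g,v,a)]\mathcal{L},$$
and assemble these into $\Phi:=\biguplus_{\bar x}\Phi_{\bar x}$. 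The source/target compatibilities required by Lemma~\ref{lSubGrpdQuotL} and the tensor condition $\varsigma=\vartheta=u$ hold by the very definition of $X$.

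\textbf{Well-definedness.} Suppose $[(k',g',w,a)]\mathcal{N}_{\bar x}=[(k,g,w,a)]\mathcal{N}_{\bar x}$. By Lemma~\ref{lEquLatLeft} we have $(k',g')=(kk_0,gg_0)$ with $(k_0,g_0)\in(\mathcal{N}_{\bar x})_1$; Definition~\ref{dGrpdStarGrpd} then gives $h_0\in\mathcal{H}^u$ such that $(k_0,h_0)\in\mathcal{M}^{(w,u)}$ and $(h_0,g_0)\in{}^{(h,\iota_a)}\mathcal{L}^{(u,a)}$, the latter translating into $(h^{-1}h_0h,g_0)\in\mathcal{L}^{(v,a)}$. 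Combining (i)~the tensor relation $x\cdot h_0\otimes y=x\otimes h_0\cdot y$ applied with $h_0\in\mathcal{H}^u$, (ii)~the $\mathcal{M}$-coset identity $[(kk_0,h_0,w,u)]\mathcal{M}=[(k,\iota_u,w,u)]\mathcal{M}$, and (iii)~the $\mathcal{L}$-coset identity $[(h_0h,gg_0,v,a)]\mathcal{L}=[(h,g,v,a)]\mathcal{L}$ (both direct applications of Lemma~\ref{lEquLatLeft}), one rewrites $\Phi_{\bar x}([(k',g')]\mathcal{N}_{\bar x})$ as $\Phi_{\bar x}([(k,g)]\mathcal{N}_{\bar x})$. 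The $(\mathcal{K},\mathcal{G})$-equivariance of $\Phi$ is then immediate from the action formulas of Lemma~\ref{lSubGrpdQuotL}.

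\textbf{Surjectivity.} Given an arbitrary simple tensor $\xi=[(k,h^{\sharp})]\mathcal{M}\otimes[(h^{\flat},g)]\mathcal{L}$ with $\mathsf{t}(h^{\sharp})=\mathsf{t}(h^{\flat})=:u^{\ast}$, consider $(\mathsf{s}(k),u^{\ast},h^{\flat},\mathsf{s}(h^{\flat}),\mathsf{s}(g))\in X$; this lies in the $(\mathcal{M},\mathcal{L})$-orbit of a unique $\bar x=(w,u,h,v,a)\in\rep_{(\mathcal{M},\mathcal{L})}(X)$. The biset elements realising this orbit equivalence, together with the left $\mathcal{K}$- and right $\mathcal{G}$-actions transporting $(k,g)$ appropriately, provide an element $[(k^{\ast},g^{\ast},w,a)]\mathcal{N}_{\bar x}$ which $\Phi_{\bar x}$ sends to $\xi$; the extra freedom in $h^{\sharp}$ is absorbed by the tensor relation with an element of $\mathcal{H}^{u^{\ast}}$, exactly as dictated by the conjugation encoded in the $\ast$-construction of $\mathcal{N}_{\bar x}$.

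\textbf{Injectivity.} This is the subtlest step and the main obstacle of the proof. Assume $\Phi_{\bar x_1}([(k_1,g_1)]\mathcal{N}_{\bar x_1})=\Phi_{\bar x_2}([(k_2,g_2)]\mathcal{N}_{\bar x_2})$. Tracing the chain of elementary identifications (one $\mathcal{M}$- or $\mathcal{L}$-coset move, or one $\mathcal{H}^{u^{\ast}}$-tensor move, at each step) that transforms the first simple tensor into the second, one shows that $\bar x_1$ and $\bar x_2$ lie in the same $(\mathcal{M},\mathcal{L})$-orbit of $X$, hence coincide as chosen representatives; within the common summand the accumulated data reconstruct both a bridging element $h_0\in\mathcal{H}^u$ and a pair $(k_0,g_0)\in(\mathcal{N}_{\bar x})_1$ such that $(k_1,g_1)^{-1}(k_2,g_2)=(k_0,g_0)$, so Lemma~\ref{lEquLatLeft} forces $[(k_1,g_1)]\mathcal{N}_{\bar x}=[(k_2,g_2)]\mathcal{N}_{\bar x}$. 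The delicate aspect is precisely the reconstruction of the bridging $h_0$: one must verify that the many intermediate $\mathcal{H}^{u^{\ast}}$-elements telescope into a single element of $\mathcal{H}^u$ witnessing membership in the star subgroupoid $\mathcal{N}_{\bar x}$, rather than in some larger object.
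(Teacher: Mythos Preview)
Your map $\Phi$ is the same explicit bijection the paper ultimately produces, but the two proofs are organised differently. The paper does not build $\Phi$ directly and then verify well-definedness, surjectivity and injectivity by hand. Instead it first invokes the general orbit decomposition of a biset (Corollary~\ref{lema:LAquila} combined with Proposition~\ref{pIsomDueStab}) to write
\[
\mathcal{V}\otimes_{\mathcal{H}}\mathcal{U}\;\cong\;\biguplus_{y}\Bigl(\tfrac{\mathcal{K}\times\mathcal{G}}{(\mathcal{K},\mathcal{G})_{y}}\Bigr)^{\mathsf{L}},
\]
and then reduces the theorem to two separate and much shorter computations: (a) a bijection $\varphi$ between $\mathcal{K}\backslash(\mathcal{V}\otimes_{\mathcal{H}}\mathcal{U})/\mathcal{G}$ and $\mathcal{M}\backslash X/\mathcal{L}$, checked by writing down an explicit inverse $\psi$; and (b) the identification $(\mathcal{K},\mathcal{G})_{y}=\mathcal{M}^{(w,u)}\ast{}^{(h,\iota_a)}\mathcal{L}^{(u,a)}$ for the particular representative $y=[(\iota_w,\iota_u,w,u)]\mathcal{M}\otimes[(h,\iota_a,v,a)]\mathcal{L}$. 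Your well-definedness computation is exactly step~(b), and your injectivity argument is, once unfolded, exactly the well-definedness of $\varphi$ in step~(a). What the paper's factorisation buys is that the ``delicate'' part you flag essentially disappears: equality in the tensor product is witnessed by a \emph{single} $h_4\in\mathcal{H}_1$ (not a chain), and that $h_4$ together with the two coset moves immediately furnishes the $(\mathcal{M},\mathcal{L})$-orbit equivalence between $\bar x_1$ and $\bar x_2$; once $\bar x_1=\bar x_2$, the same $h_4$ is already the bridging element you seek. So your proof is correct, but you have overstated the difficulty of injectivity, and the paper's modular route via orbit decomposition makes that step transparent rather than subtle.
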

\begin{proof}
Notice that under assumptions the denominator in the right hand side of Formula \eqref{eMackeyFormulaBisets} is a well defined subgroupoid of $\cK \times \cG$ and thus the right hand side of this formula is well defined as well. 
For simplicity let us denote
\[ \mathcal{V}:= \left({\frac{\mathcal{K} \times \mathcal{H} }{\mathcal{M} } }\right)^{\Sscript{\mathsf{L}} }
\qquad \text{and} \qquad
\mathcal{U} := \left({\frac{\mathcal{H} \times \mathcal{G} }{\mathcal{L} } }\right)^{\Sscript{\mathsf{L}} }.
\]
As expounded in Lemma \ref{lSubGrpdQuotL}, \(\mathcal{V}\) is a \(\left({\mathcal{K}, \mathcal{H}}\right)\)-biset with structure maps
\[ \begin{aligned}{\Theta}  \colon & {\mathcal{V}} \longrightarrow {\mathcal{K}_{ \scriptscriptstyle{0} } } \\ & {[\left({k, h, w, u}\right)] \mathcal{M} }  \longmapsto {\mathsf{t}\left({k}\right) }\end{aligned} 
\qquad \text{and} \qquad
\begin{aligned}{\Xi}  \colon & {\mathcal{V} } \longrightarrow {\mathcal{H}_{ \scriptscriptstyle{0} } } \\ & {[\left({k, h, w, u}\right)] \mathcal{M}}  \longmapsto {\mathsf{t}\left({h}\right) }\end{aligned} 
\]
and \(\mathcal{U}\) is an \(\left({\mathcal{H}, \mathcal{G}}\right)\)-biset with structure maps
\[ \begin{aligned}{\Upsilon}  \colon & {\mathcal{U}} \longrightarrow {\mathcal{H}_{ \scriptscriptstyle{0} } } \\ & {[\left({h, g, v, a}\right)] \mathcal{L} }  \longmapsto {\mathsf{t}\left({h}\right) }\end{aligned} 
\qquad \text{and} \qquad
\begin{aligned}{\Lambda}  \colon & {\mathcal{U} } \longrightarrow {\mathcal{G}_{ \scriptscriptstyle{0} } } \\ & {[\left({h, g, v, a}\right)] \mathcal{L}}  \longmapsto {\mathsf{t}\left({g}\right). }\end{aligned} 
\]
Therefore, following subsection \ref{ssec:tensor}, the tensor product  $\cV \tensor{\cH}\cU$ in the left hand side of  \eqref{eMackeyFormulaBisets} makes sense and it is a $(\cK, \cG)$-biset by Lemma \ref{lema:TP}.  The orbit of a given element 
$[(k, h, w, u)] \mathcal{M} \tensor{\cH}  [(h', g, v, a)] \mathcal{L}$  in $\cV \tensor{\cH}\cU$ will be  denoted by 
$ \mathcal{K}\Big[\Big([(k,h,w,u)] \mathcal{M} \tensor{\cH} [(h', g, v, a)] \mathcal{L}  \Big)\Big]\mathcal{G} $. If  \(y \in \mathcal{K} \backslash \left({ \mathcal{V} \otimes_{ \scriptscriptstyle{\mathcal{H} } }  \mathcal{U} }\right) / \mathcal{G}\) is an element in the set of orbits of $\cV \tensor{\cH}\cU$, then we will use the following notations, similar to the ones already used in Section \ref{ssec:StabOrbBiset} and in Proposition \ref{pIsomDueStab}:
\[ 
\begin{aligned}
\left({\Stabi_{ \scriptscriptstyle{\left({\mathcal{K},\,  \mathcal{G}}\right) }}\left({y}\right) }\right)_{ \scriptscriptstyle{0} } & = \left({ \left({\mathcal{K}, \mathcal{G} }\right)_{ \scriptscriptstyle{y} }  }\right)_{ \scriptscriptstyle{0} } = \Set{\left({\Theta\left({y}\right), \Lambda\left({y}\right)}\right)}, \\
\left({\left({\mathcal{K}, \mathcal{G} }\right)_{ \scriptscriptstyle{y} }  }\right)_{ \scriptscriptstyle{1} } &= \LR{ \left({k,g}\right) \in \mathcal{K}_{ \scriptscriptstyle{1} } \times \mathcal{G}_{ \scriptscriptstyle{1} } | \; ky = yg, \quad \Theta\left({y}\right)= \mathsf{s}\left({k}\right), \quad \mathsf{t}\left({g}\right)= \Lambda\left({y}\right)  } , \\
\left({\Stabi_{ \scriptscriptstyle{\left({\mathcal{K},\,  \mathcal{G}}\right) }}\left({y}\right)  }\right)_{ \scriptscriptstyle{1} } &= \LR{ \left({k,g}\right) \in \mathcal{K}_{ \scriptscriptstyle{1} } \times \mathcal{G}_{ \scriptscriptstyle{1} } | \; kyg = y, \quad \Theta\left({y}\right)= \mathsf{s}\left({k}\right), \quad \mathsf{t}\left({g}\right)= \Lambda\left({y}\right)  } . \\
\end{aligned}
\]
Since, by Lemma \ref{lema:LAquila}  and Proposition \ref{pBisetsLeftSets}, every biset is the disjoint union of its orbits. By  Proposition~\(\ref{pIsomDueStab}\) we obtain the following isomorphisms of \(\left({\mathcal{K}, \mathcal{G}}\right)\)-bisets:
\[  
\mathcal{V} \otimes_{ \scriptscriptstyle{\mathcal{H} } }  \mathcal{U} 
\cong \biguplus_{  y\, \in\,   \rep_{ \scriptscriptstyle{\left(\mathcal{K},\,  \mathcal{G}\right)}}\left(\mathcal{V} \otimes_{ \scriptscriptstyle{\mathcal{H} } }  \mathcal{U} \right)  } \left({\frac{\mathcal{K} \times \mathcal{G} }{\Stabi_{ \scriptscriptstyle{\left({\mathcal{K}, \, \mathcal{G} }\right) }}\left({  y  }\right)  } }\right)^{\Sscript{\mathsf{L}} }  
\cong \biguplus_{y\, \in\,  \rep_{ \scriptscriptstyle{\left(\mathcal{K},\,  \mathcal{G}\right)}}\left(\mathcal{V} \otimes_{ \scriptscriptstyle{\mathcal{H} } }  \mathcal{U} \right) } \left({\frac{\mathcal{K} \times \mathcal{G} }{ \left({\mathcal{K}, \mathcal{G} }\right)_{ \scriptscriptstyle{y} }  } }\right)^{\Sscript{\mathsf{L}} }  .
\]
Consider the map
\[ \begin{aligned}{\varphi}  \colon & {\mathcal{K} \setminus \left({\mathcal{V} \otimes_{ \scriptscriptstyle{\mathcal{H} } } \mathcal{U} }\right) / \mathcal{G} } \longrightarrow { \mathcal{M} \setminus X / \mathcal{L} } \\ & {\mathcal{K}\Big[\Big({[\left({k, h, w, u}\right)] \mathcal{M} \, \otimes_{ \scriptscriptstyle{\mathcal{H}} }\,  [\left({h', g, v, a}\right)] \mathcal{L}  }\Big)\Big]\mathcal{G} }  \longmapsto {\mathcal{M}\Big[({w, u, h^{-1} h', v, a}) \Big]\mathcal{L} .}
\end{aligned} 
\]
We have to check that \(\varphi\) is well defined. So let us choose two representatives for the same orbit, that is, let us assume that we have an equality of the form:
\[ \mathcal{K}\Big[{[\left({k, h, w, u}\right) ]\mathcal{M} \otimes_{ \scriptscriptstyle{\mathcal{H}} } [\left({h', g, v, a}\right)] \mathcal{L}  }\Big]\mathcal{G} = \mathcal{K}\Big[{[\left({l, e, r, n}\right) ]\mathcal{M} \otimes_{ \scriptscriptstyle{\mathcal{H}} } [\left({e', f, m, b}\right)] \mathcal{L}  }\Big]\mathcal{G} \; \in \; \cK\backslash (\cV \tensor{\cH} \cU)  / \cG
\]
the orbits set of $\cV \tensor{\cH} \cU$, where 
$[(k, h, w, u) ]\mathcal{M},  [(l, e, r, n ]\mathcal{M} \in \cV$ and $[(h', g, v, a)] \mathcal{L},  [(e', f, m, b)] \mathcal{L} \in \cU$. By definition this equality means that 
there are \(k_1 \in \mathcal{K}_{ \scriptscriptstyle{1} }\) and \(g_1 \in \mathcal{G}_{ \scriptscriptstyle{1} }\) such that \(\mathsf{s}\left({k_1}\right)= \mathsf{t}\left({l}\right)\),  \(\mathsf{t}\left({l}\right)= \mathsf{t}\left({g_1}\right)\) and 
$$
\begin{gathered}
\big[(k,h, w, u)\big]
\cM\tensor{\cH}[(h', g, v, a)]\cL = k_1\,\Big( [(l, e, r, n)]\cM \tensor{\cH}[(e', f, m, b)]\cL\Big) \, g_1 \\ = [(k_1l,e,r,n)]\cM\tensor{\cH}[(e',g_1^{-1}f,m,b)]\cL
\end{gathered}
$$
Thus there is \(h_1 \in \mathcal{H}_{ \scriptscriptstyle{1} }\) such that \(\mathsf{t}\left({e}\right)= \mathsf{t}\left({h_1}\right)\), \(\mathsf{t}\left({h_1}\right)= \mathsf{t}\left({e'}\right)\) and
\[ 
\Big({ [\left({k, h, w, u}\right)] \mathcal{M}, [\left({h', g, v, a}\right)]\mathcal{L} }\Big) = \Big({ [\left({k_1 l, h_1^{-1} e, r, n}\right)]\mathcal{M}, [\left({h_1^{-1} e', g_1^{-1} f, m, b}\right)] \mathcal{L} }\Big)  \; \in \; \cV \times \cU.
\]
This means that 
\[ \left\lbrace  
\begin{aligned}
& [\left({k, h, w, u}\right) ]\mathcal{M} = [\left({k_1 l, h_1^{-1} e, r, n}\right)]\mathcal{M}  \\
& [\left({h', g, v, a}\right)]\mathcal{L} = [\left({h_1^{-1} e', g_1^{-1} f, m, b}\right)] \mathcal{L}.
\end{aligned}  \right.
\]

As a consequence, from one hand, there is  \(\left({k_2, h_2}\right) \in \mathcal{M}_{ \scriptscriptstyle{1} }\) such that \(\mathsf{s}\left({k_2, h_2}\right)= \left({w, u}\right)\), \(\mathsf{t}\left({k_2, h_2}\right)= \left({r, n}\right)\) and
\begin{equation}\label{Eq:h1} 
\left({k, h, w, u}\right) = \left({k_1 l, h_1^{-1} e, r, n}\right) \left({k_2, h_2}\right) 
= \left({k_1 l k_2, h_1^{-1} e h_2, \mathsf{s}\left({k_2}\right), \mathsf{s}\left({h_2}\right) }\right). 
\end{equation}
On the other hand,  there is \(\left({h_3, g_2}\right) \in \mathcal{L}_{ \scriptscriptstyle{1} }\) such that \(\mathsf{s}\left({h_3, g_2}\right)= \left({v, a}\right)\), \(\mathsf{t}\left({h_3, g_2}\right)= \left({m, b}\right)\) and
\begin{equation}\label{Eq:k1} 
\left({h', g, v, a}\right) = \left({h_1^{-1} e', g_1 f, m, b}\right) \left({h_3, g_2}\right)
= \left({h_1^{-1} e' h_3, g_1 f g_2, \mathsf{s}\left({h_3}\right), \mathsf{s}\left({g_2}\right) }\right).
\end{equation}
Therefore we obtain the following equalities  from equations \eqref{Eq:h1} and \eqref{Eq:k1}  

\[
k_2 = l^{-1} k_1^{-1} k, \quad h_2 = e^{-1} h_1 h, \quad \text{ and }\quad  h_3 = e'^{-1} h_1 h', \quad  g_2 = f^{-1} g_1^{-1} g.
\]
Thus
\[ \begin{gathered}
\left({k_2, h_2}\right) \left({w, u, h^{-1} h', v, a}\right) \left({h_3, g_2}\right)^{-1}
= \left({\mathsf{s}\left({k_2}\right), \mathsf{t}\left({h_2}\right), h_2 h^{-1} h' h_3^{-1}, \mathsf{t}\left({h_3}\right), \mathsf{t}\left({g_2}\right) }\right) \\
= \left({w, n, e^{-1} h_1 h h^{-1} h' h'^{-1} h_1^{-1} e', m, b}\right)
= \left({w, n, e^{-1} e', m, b}\right), 
\end{gathered}
\]
which shows that $\cM[(w, u, h^{-1} h', v, a)]\cL = \cM[(w, n, e^{-1} e', m, b)] \cL$ in the orbits set $\cM \backslash X / \cL$. Henceforth,  \(\varphi\) is a well defined map.

In other direction, we have a well defined map given by 
\[ \begin{aligned}{\psi}  \colon & {\mathcal{M} \backslash X / \mathcal{L}} \longrightarrow {\mathcal{K}\backslash  \left({\mathcal{V} \otimes_{ \scriptscriptstyle{\mathcal{H} } } \mathcal{U} }\right) / \mathcal{G} } \\ & {\mathcal{M} [\left({w, u, h, v, a}\right)] \mathcal{L} }  \longrightarrow {\mathcal{K} \Big[{[\left({\iota_{ \scriptscriptstyle{w} }, \iota_{ \scriptscriptstyle{u} }, w, u}\right)] \mathcal{M} \otimes_{ \scriptscriptstyle{\mathcal{H} } } [\left({h, \iota_{ \scriptscriptstyle{a} }, v, a}\right)] \mathcal{L} }\Big] \mathcal{G}.}
\end{aligned} 
\]

Let us check  that \(\varphi\) and \(\psi\) are one the inverse of the other.
So, for each orbit
\[ 
\mathcal{K}\Big[{[\left({k, h, w, u}\right)] \mathcal{M} \otimes_{ \scriptscriptstyle{\mathcal{H}} } [\left({h', g, v, a}\right)] \mathcal{L}  }\Big]\mathcal{G} \; \in \; \mathcal{K} \backslash \left({\mathcal{V} \otimes_{ \scriptscriptstyle{\mathcal{H} } } \mathcal{U} }\right) / \mathcal{G}
\]
we have
\begin{eqnarray*}
\psi  \circ \varphi \Big({ \mathcal{K}\Big[{[\left({k, h, w, u}\right)] \mathcal{M} \otimes_{ \scriptscriptstyle{\mathcal{H}} } [\left({h', g, v, a}\right)] \mathcal{L}  }\Big]\mathcal{G} }\Big)  &=& 
 \psi \left({ \mathcal{M}\Big[\left({w, u, h^{-1} h', v, a}\right)\Big] \mathcal{L} }\right)  \\
&=& \mathcal{K}\Big[{[\left({\iota_{ \scriptscriptstyle{w} }, \iota_{ \scriptscriptstyle{u} }, w, u}\right)] \mathcal{M} \otimes_{ \scriptscriptstyle{H} } [\left({h^{-1} h', \iota_{ \scriptscriptstyle{a} }, v, a}\right)] \mathcal{L} }\Big] \mathcal{G} \\
&=&  \mathcal{K} \Big[\Big( k\big([(\iota_{\Sscript{w}}, \iota_{\Sscript{u}}, w, u)]\cM\big)h^{-1} \tensor{\cH} \big( [(h', \iota_{\Sscript{a}},v,a )] \cL \big) g^{-1}  \Big)\Big] \cG \\
&=& \mathcal{K}\left[{[\left({k, h, w, u}\right)] \mathcal{M} \otimes_{ \scriptscriptstyle{\mathcal{H}} } [\left({h', g, v, a}\right)] \mathcal{L}  }\right]\mathcal{G},
\end{eqnarray*}
which shows that $\psi \circ \varphi =id$. 

Conversely, for each element $
\mathcal{M} [\left({w, u, h, v, a}\right)] \mathcal{L} \;  \in \; \mathcal{M} \backslash X / \mathcal{L}$,
we have
\begin{eqnarray*}
\varphi \circ  \psi \left(\cM [(w, u, h, v, a)] \cL \right)  &=& \varphi \Big( \mathcal{K} \Big[{[\left({\iota_{ \scriptscriptstyle{w} }, \iota_{ \scriptscriptstyle{u} }, w, u}\right)] \mathcal{M} \otimes_{ \scriptscriptstyle{\mathcal{H} } } [\left({h, \iota_{ \scriptscriptstyle{a} }, v, a}\right)] \mathcal{L} }\Big] \mathcal{G} \Big) \\
&=& \mathcal{M} \left[\big({w, u, \iota_{ \scriptscriptstyle{u} }^{-1} h, u, a}\big)\right] \mathcal{L}  \\
&=& \mathcal{M}[ \left({w, u, h, u, a}\right)] \mathcal{L},
\end{eqnarray*}
whence $\varphi \circ \psi = id$. 
Therefore \(\varphi\) is bijective with inverse \(\psi\).

Let us check that, for every  element of the form
\[ 
y= [\left({\iota_{ \scriptscriptstyle{w} }, \iota_{ \scriptscriptstyle{u} }, w, u}\right)] \mathcal{M} \otimes_{ \scriptscriptstyle{\mathcal{H}} } [\left({h, \iota_{ \scriptscriptstyle{a} }, v, a}\right)] \mathcal{L}  \;  \in \; \left[ \mathcal{K} \backslash \left({\mathcal{V} \otimes_{ \scriptscriptstyle{\mathcal{H} } } \mathcal{U} }\right) / \mathcal{G} \right],
\]
there is the following equality of subgroupoids
$$
\left({\mathcal{K}, \mathcal{G}}\right)_{ \scriptscriptstyle{y} }  \, =\, \cM^{\Sscript{(w,\, u)}} * {}^{\Sscript{(h,\, \iota_a)}} \cL^{\Sscript{(u,\, a)}}.
$$

So, taking \(\left({k_3, g_3}\right) \in \mathcal{K}_{ \scriptscriptstyle{1} } \times \mathcal{G}_{ \scriptscriptstyle{1} }\) such that \(\mathsf{s}\left({k_3}\right)= \mathsf{t}\left({k_3}\right)= w\) and \(\mathsf{s}\left({g_3}\right)= \mathsf{t}\left({g_3}\right)= a\) we have \(k_3 y = y g_3\) if and only if
\[ y = [\left({k_3, \iota_{ \scriptscriptstyle{u} }, w, u}\right)] \mathcal{M} \otimes_{ \scriptscriptstyle{\mathcal{H} } } [\left({h, g_3, u, a}\right)] \mathcal{L},
\]
if and only if there exists \(h_4 \in \mathcal{H}_{ \scriptscriptstyle{1} }\) such that \(\mathsf{s}\left({h_4}\right)= \mathsf{t}\left({h_4}\right)= u\) and
\begin{eqnarray*}
\Big({[\left({\iota_{ \scriptscriptstyle{w} }, \iota_{ \scriptscriptstyle{u} }, w, u}\right)] \mathcal{M}, [\left({h, \iota_{ \scriptscriptstyle{a} }, u, a}\right)] \mathcal{L} }\Big)  &=& \Big(\big({[\left({k_3, \iota_{ \scriptscriptstyle{u} }, w, u}\right)] \mathcal{M}\big) h_4, h_4^{-1}\big( [\left({h, g_3, u, a}\right)] \mathcal{L}\big) }\Big)   \\
&=& \Big({[\left({k_3, \iota_{ \scriptscriptstyle{u} }h_4^{-1}, w, u}\right)] \mathcal{M} ,  [\left({h_4^{-1}h, g_3, u, a}\right)] \mathcal{L} }\Big) \\ 
&=& \Big({[\left({k_3, h_4^{-1}, w, u}\right)] \mathcal{M} ,  [\left({h_4^{-1}h, g_3, u, a}\right)] \mathcal{L} }\Big) \quad \in \cV \times \cU.
\end{eqnarray*}
This holds true, if and only if, there exists  \(h_4 \in \cH^{\Sscript{(u,\,  u)}}\) such that
\[ \left\lbrace  \begin{aligned}
& [\left({\iota_{ \scriptscriptstyle{w} }, \iota_{ \scriptscriptstyle{u} }, w, u}\right)] \mathcal{M} = [\left({k_3, h_4^{-1}, w, v}\right)] \mathcal{M} \; \in \; \cV,    \\
& [\left({h, \iota_{ \scriptscriptstyle{a} }, u, a}\right)] \mathcal{L} =  [\left({h_4^{-1} h, g_3, u, a}\right)] \mathcal{L} \; \in \; \cU,
\end{aligned}  \right.
\]
if and only if there exists \(h_4  \in \cH^{\Sscript{(u,\,  u)}}\) such that
\[ \left\lbrace  \begin{aligned}
& \left({k_3, h_4^{-1}}\right) \in \mathcal{M}^{\Sscript{\left({w, u}\right)} } \\
& \left({h, \iota_{ \scriptscriptstyle{a} } }\right)^{-1}  \left({h_4^{-1}, g_3}\right) \left({h, \iota_{ \scriptscriptstyle{a} } }\right) = \left({h, \iota_{ \scriptscriptstyle{a} } }\right)^{-1} \left({h_4^{-1} h, g_3}\right) \in \mathcal{L}^{\Sscript{{\left({v, a}\right)}}},
\end{aligned}  \right.
\]
if and only if there exists \(h_4  \in \cH^{\Sscript{(u,\,  u)}}\) such that
\[ \left\lbrace  \begin{aligned}
& \left({k_3, h_4^{-1}}\right) \in \mathcal{M}^{\Sscript{\left({w,\,  u}\right)} }  \\
& \left({h_4^{-1}, g_3}\right)  \in
{}^{\Sscript{(h,\, \iota_a)}}\mathcal{L}^{\Sscript{{\left({u,\,  a}\right)}}},
\end{aligned}  \right.
\]
if and only if
\[ \left({k_3, g_3}\right) \, \in \,  \mathcal{M}^{\Sscript{\left({w,\,  u}\right)} }    \ast {}^{\Sscript{(h,\, \iota_a)}}\mathcal{L}^{\Sscript{{\left({u,\,  a}\right)}}}.
\]

As a consequence we get the following isomorphisms of \(\left({\mathcal{K}, \mathcal{G}}\right)\)-bisets:
\[ \begin{gathered}
 \mathcal{V} \otimes_{ \scriptscriptstyle{\mathcal{H} } }  \mathcal{U}  
\cong \biguplus_{y \, \in \,  \rep_{ \scriptscriptstyle{\left(\mathcal{K}, \mathcal{G}\right)}}\left(\mathcal{V} \otimes_{ \scriptscriptstyle{\mathcal{H} } }  \mathcal{U} \right) } \left({\frac{\mathcal{K} \times \mathcal{G} }{ \left({\mathcal{K}, \mathcal{G} }\right)_{ \scriptscriptstyle{y} }  } }\right)^{\Sscript{\mathsf{L}} }   
\cong \biguplus_{  \left({w,\,  u,\,  h,\, v,\, a}\right) \, \in \,  \rep_{\scriptscriptstyle{\left(\mathcal{M}, \mathcal{L}\right)}}(X)  } \left({\frac{\mathcal{K} \times \mathcal{G} }{ \mathcal{M}^{\Sscript{\left({w,\,  u}\right)} }    \ast {}^{\Sscript{(h,\, \iota_a)}}\mathcal{L}^{\Sscript{{\left({u,\,  a}\right)}}}} }\right)^{\Sscript{\mathsf{L}} } ,  
\end{gathered}
\]
which depends on the choice of a representatives set of the orbits of the biset $X$, and this finishes the proof.
\end{proof}

As a final remark of this section, note that, as it has been done in \cite[Remark 4.1.6]{Bouc:2010} the Mackey formula could be used to characterize the admissible subcategories of the biset category of finite groupoids, once this has been opportunely defined. We do not go into the details because it would be very  technical and it will be published elsewhere.

\section{Examples using the equivalence relation groupoid}\label{sec:EquiRelation}
In this section we will give a simple application of Theorem \ref{thm:Main}, using the case of groupoids of equivalence relations as subgroupoids of groupoids of pairs, see Example \ref{exam:X}. In other words, we want to test this formula for this case. As we will see at the end of subsection \ref{ssec:EqMF}, this result is not surprising, although not immediate to  decipher. 
  
\subsection{Subgroupoids and equivalence relations}\label{ssec:SER}

Given a set \(H\), consider as in Example \ref{exam:X} the groupoid of pairs \(\mathcal{H}= \left({H \times H, H}\right)\) and let \(R\) be an equivalence relation on \(H\).
Given the groupoid of equivalence relation \(\mathcal{R}= \left({R,H}\right)\), we can consider the inclusion of groupoids
\begin{equation}\label{Eq:Rtau} 
\mathcal{R}= \left({R,H}\right) \hookrightarrow \mathcal{H}= \left({H \times H, H}\right).
\end{equation}
Following equation \eqref{Eq:L}, we know that
\[ 
\left({\frac{\mathcal{H} }{\mathcal{R} } }\right)^{ \scriptscriptstyle{L} } = \Set{ \left[{\left({a,u}\right) }\right] \mathcal{R}  | \left({a,u}\right) \in \mathcal{R}\left({\mathcal{H} }\right)^{ \scriptscriptstyle{\uptau} } = \mathcal{H}_{ \scriptscriptstyle{1} }  {{}_{ \scriptscriptstyle{\mathsf{s} } } {\times}}_{ \scriptscriptstyle{\uptau_0} }\, \mathcal{R}_{ \scriptscriptstyle{0} }  } 
\]
where \(a \in \mathcal{H}_{ \scriptscriptstyle{1} } \), \(\mathsf{s}\left({a}\right)= u \in \mathcal{R}_{ \scriptscriptstyle{0} } = H\) and where
\[ 
\left[{\left({a,u}\right) }\right]\mathcal{R} = \Set{ \left({ar, \mathsf{s}\left({r}\right) }\right)\in \mathcal{R}\left({\mathcal{H}}\right)^{ \scriptscriptstyle{\uptau} }   |  r \in \mathcal{R}_{ \scriptscriptstyle{1} }, \mathsf{s}\left({a}\right) = \mathsf{t}\left({r}\right)  } 
\]
with \(r \in \mathcal{R}_{ \scriptscriptstyle{1} }\); so there are \(h_3, h_4 \in H\) (in the same equivalence class) such that \(r= \left({h_3, h_4}\right)\), and  we also have  \(a= \left({h_1, h_2}\right)\in \mathcal{H}_{ \scriptscriptstyle{1} }\) with  \(h_1, h_2 \in H\).
In particular, we have  \(h_2 = \mathsf{s}\left({a}\right) = \mathsf{t}\left({r}\right) = h_3\), \(h_4 = \mathsf{s}\left({r}\right) \in \mathcal{R}_{ \scriptscriptstyle{0} } = H\), \(h_2 = \mathsf{s}\left({a}\right) = u \in \mathcal{R}_{ \scriptscriptstyle{0} }\) and
\[ ar= \left({h_1, h_2}\right) \left({h_3, h_4}\right) = \left({h_1, h_4}\right).
\]
As a consequence
\[ \left[{\left({a,u}\right) }\right] \mathcal{R} = \left[{\left({ \left({h_1, h_2}\right), h_2}\right)}\right] \mathcal{R} 
=\Set{\left({ \left({h_1, h_4}\right), h_4}\right) \in \mathcal{H}_{ \scriptscriptstyle{1} }  {{}_{ \scriptscriptstyle{\mathsf{s} } } {\times}}_{ \scriptscriptstyle{\uptau_0} }\, \mathcal{R}_{ \scriptscriptstyle{0} } = \left({H \times H}\right) \times H }
\]
and
\[ \left({\frac{\mathcal{H} }{\mathcal{R} } }\right)^{ \scriptscriptstyle{\mathsf{L} } } = \Big\{ \left[{\left({\left({h_1, h_2}\right), h_2}\right) }\right] \mathcal{R}  | \; h_1, h_2 \in H\Big\}.
\]

\begin{lemma}\label{lema:Velero}
\label{lLeftSetRel}
Let \(H/R\) be the quotient set of \(H\) by the equivalence relation \(R\).
Then \(H \times \left({H/R}\right)\) becomes a left \(\mathcal{H}\)-set with structure map and action given by
\[ \begin{aligned}{\varsigma}  \colon & {H \times \frac{H}{R} } \longrightarrow {\mathcal{H}_{ \scriptscriptstyle{0} } = H} \\ & {\left({h_1, \overline{h_2}}\right) }  \longrightarrow {h_1}\end{aligned} 
\qquad \text{and} \qquad
\begin{aligned}
&\mathcal{H}_{ \scriptscriptstyle{1} }  {{}_{ \scriptscriptstyle{\mathsf{s}} } {\times}}_{ \scriptscriptstyle{\varsigma} }\, \left({H \times \frac{H}{R}}\right)  \longrightarrow H \times \frac{X}{R}  \\
&\left({\left({h_3,h_1}\right), \left({h_1,\overline{h_2} }\right) }\right)  \longrightarrow \left({h_3, \overline{h_2} }\right),
\end{aligned}
\]
where for every $h \in H$, $\bara{h}$ denotes the equivalence class of $h$ modulo the relation $R$. 
\end{lemma}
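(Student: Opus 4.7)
The plan is to verify directly the three axioms in the left-handed version of Definition \ref{def:Gset}. The key observation, which makes the whole verification essentially automatic, is that both the structure map $\varsigma$ and the action only manipulate the first coordinate of a pair $(h_1,\overline{h_2}) \in H \times (H/R)$, while the class $\overline{h_2} \in H/R$ is merely carried along. In particular, no choice of representative for $\overline{h_2}$ enters anywhere, so well-definedness of the action is immediate—this is the point at which one might a priori have expected friction, and it is precisely the independence from the $H/R$-coordinate that dissolves it.

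For the source--target compatibility axiom, I would take an element $\big((h_3,h_1),(h_1,\overline{h_2})\big)$ of $\mathcal{H}_1 \,{{}_{\Sscript{\mathsf{s}}}\times}_{\Sscript{\varsigma}} (H\times H/R)$ (noting $\mathsf{s}(h_3,h_1)=h_1=\varsigma(h_1,\overline{h_2})$) and observe that the action produces $(h_3,\overline{h_2})$, whose image under $\varsigma$ is $h_3 = \mathsf{t}(h_3,h_1)$, as required. For the unit axiom, the identity arrow at $\varsigma(h_1,\overline{h_2})=h_1$ is $\iota_{h_1}=(h_1,h_1)$ by the structure of the groupoid of pairs (see Example \ref{exam:X}(1)), and its action sends $(h_1,\overline{h_2})$ back to itself.

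Finally, for associativity I would take composable arrows $(h_4,h_3), (h_3,h_1) \in \mathcal{H}_1$, so that $\mathsf{s}(h_4,h_3)=h_3=\mathsf{t}(h_3,h_1)$, together with $(h_1,\overline{h_2})\in H\times (H/R)$. Using the multiplication rule $(h_4,h_3)(h_3,h_1)=(h_4,h_1)$ of the groupoid of pairs, both
\[
(h_4,h_3)\cdot\big((h_3,h_1)\cdot(h_1,\overline{h_2})\big) \qquad \text{and} \qquad \big((h_4,h_3)(h_3,h_1)\big)\cdot(h_1,\overline{h_2})
\]
reduce to $(h_4,\overline{h_2})$, which settles associativity. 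There is no serious obstacle in this proof: the whole verification is mechanical once one notices that the action is invariant in the $H/R$-coordinate. The only mild subtlety is keeping straight the left-handed conventions (fiber product over $\mathsf{s}$ and $\varsigma$ rather than $\mathsf{t}$ and $\varsigma$), which follow from the ``interchanging source with target'' convention announced after Definition \ref{def:Gset}.
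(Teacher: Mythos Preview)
Your proposal is correct and takes essentially the same approach as the paper, which simply declares the verification ``immediate'' and omits the details you have spelled out. The routine check of the three left-$\mathcal{H}$-set axioms from Definition~\ref{def:Gset} (with source and target interchanged) is exactly what the authors intend, and your observation that the $H/R$-coordinate is inert handles well-definedness at once.
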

\begin{proof}
It is immediate. 
\end{proof}
The following lemma is also straightforward. 
\begin{lemma}
\label{lExGrpdEqRelPairs}
Given \(h_1, h_2, h_3, h_1', h_2', h_3' \in H\) we have that \(\left({\left({h_1, h_2}\right) h_2}\right) \sim \left({\left({h_1', h_2'}\right), h_2'}\right)\), as representative elements of the coset \(\left[{\left({\left({h_1, h_2}\right), h_2}\right) }\right]\mathcal{R}\), if and only if \(h_1 = h_1'\) and \(\overline{h_2} = \overline{h_2'}\) as equivalence classes in \(H/R\).
\end{lemma}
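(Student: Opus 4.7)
The plan is to reduce the statement to the already-established Lemma \ref{lEquLatLeft} applied to the inclusion $\tauup \colon \mathcal{R} \hookrightarrow \mathcal{H}$ of \eqref{Eq:Rtau}, and then carry out the explicit computation inside the pair groupoid.

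First, I observe that the two candidate representatives $((h_1,h_2),h_2)$ and $((h_1',h_2'),h_2')$ live in $\mathcal{H}_1 \times_{\mathsf{s},\tauup_0} \mathcal{R}_0$ since $\mathsf{s}(h_i,h_{i+1})=h_{i+1}$ in the pair groupoid (with the conventions $\mathsf{s}=\pr_2$, $\mathsf{t}=\pr_1$ from Example \ref{exam:X}), so both sources coincide with the elements $h_2, h_2'$ of $\mathcal{R}_0=H$. Hence Lemma \ref{lEquLatLeft} applies and asserts that the two elements are $\mathcal{R}$-equivalent on the left if and only if
\[
(h_1',h_2')^{-1}(h_1,h_2)\;\in\;\mathcal{R}_1 = R.
\]

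Next I would compute this expression explicitly in the pair groupoid $\mathcal{H}$. The inverse is $(h_1',h_2')^{-1}=(h_2',h_1')$, and the multiplication $(h_2',h_1')(h_1,h_2)$ in $\mathcal{H}$ is defined precisely when $\mathsf{s}(h_2',h_1')=\mathsf{t}(h_1,h_2)$, i.e.\ when $h_1'=h_1$, in which case it equals $(h_2',h_2)$. Therefore the condition $(h_1',h_2')^{-1}(h_1,h_2)\in R$ forces two things simultaneously: composability, i.e.\ $h_1=h_1'$, and membership $(h_2',h_2)\in R$, i.e.\ $\overline{h_2}=\overline{h_2'}$.

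Finally I would package the two implications: if $h_1=h_1'$ and $\overline{h_2}=\overline{h_2'}$, then $(h_2',h_2)\in R=\mathcal{R}_1$ and is exactly $(h_1',h_2')^{-1}(h_1,h_2)$, so Lemma \ref{lEquLatLeft} yields the equivalence of the cosets; conversely, if the cosets agree, Lemma \ref{lEquLatLeft} forces the product $(h_1',h_2')^{-1}(h_1,h_2)$ to exist in $\mathcal{H}_1$ and lie in $R$, which by the computation above gives precisely $h_1=h_1'$ and $(h_2,h_2')\in R$. There is no real obstacle here: the only delicate point is to remember that ``$g_2^{-1}g_1\in\mathcal{H}_1$'' in Lemma \ref{lEquLatLeft} implicitly requires the composability $\mathsf{s}(g_2)=\mathsf{s}(g_1)$, which in our pair-groupoid context is exactly the $h_1=h_1'$ condition.
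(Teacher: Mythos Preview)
Your proof is correct and is exactly the kind of direct verification the paper has in mind when it declares the lemma ``straightforward'' without giving a proof: reduce to Lemma~\ref{lEquLatLeft} and compute in the pair groupoid. One small slip in your closing remark: the composability condition needed for $g_2^{-1}g_1$ is $\mathsf{t}(g_2)=\mathsf{t}(g_1)$, not $\mathsf{s}(g_2)=\mathsf{s}(g_1)$; your earlier computation $\mathsf{s}(h_2',h_1')=\mathsf{t}(h_1,h_2)$ already has this right and correctly yields $h_1=h_1'$.
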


Now we have the following isomorphism of groupoid-sets:
\begin{proposition}\label{pIsomLCosetsRel}
Using the morphism  of groupoids \eqref{Eq:Rtau}, we consider \(\left({\mathcal{H}/\mathcal{R} }\right)^{ \scriptscriptstyle{\mathsf{L}} }\)  as a left \(\mathcal{H}\)-set with structure map and action given explicitly in equation \eqref{Eq:Paloma}.  Then, 
there is an isomorphism of left \(\mathcal{H}\)-set
\[ \begin{aligned}{\psi}  \colon & {\left({\frac{\mathcal{H} }{\mathcal{R} } }\right)^{ \scriptscriptstyle{\mathsf{L} } } } \longrightarrow {H \times \frac{H}{R} } \\ & {\left[{\left({\left({h_1, h_2}\right), h_2}\right) }\right] \mathcal{R} }  \longrightarrow {\left({h_1, \overline{h_2} }\right) .}
\end{aligned} 
\]
\end{proposition}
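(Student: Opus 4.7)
The plan is to construct the inverse map explicitly and use the lemmas already established in the subsection. The candidate $\psi$ is defined on representatives, so the first task is to verify it descends to equivalence classes. Concretely, given two representatives $((h_1,h_2),h_2)$ and $((h_1',h_2'),h_2')$ of the same left coset, Lemma~\ref{lExGrpdEqRelPairs} asserts $h_1=h_1'$ and $\overline{h_2}=\overline{h_2'}$, which is exactly the condition for equality in $H\times (H/R)$. The converse reading of the same lemma shows that distinct elements of $H\times (H/R)$ come from inequivalent representatives, so $\psi$ is simultaneously well-defined and injective. Surjectivity is immediate: any $(h_1,\overline{h_2})$ is hit by $[((h_1,h_2),h_2)]\mathcal{R}$.

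Next I would check that $\psi$ is $\mathcal{H}$-equivariant with respect to the structure maps. The structure map of $(\mathcal{H}/\mathcal{R})^{\mathsf{L}}$ from \eqref{Eq:Paloma} sends $[((h_1,h_2),h_2)]\mathcal{R}$ to $\mathsf{t}(h_1,h_2)=h_1$, while the structure map $\varsigma$ of $H\times(H/R)$ from Lemma~\ref{lema:Velero} sends $(h_1,\overline{h_2})$ to $h_1$; these agree on the nose. For the left action, I would take an arrow $(h_3,h_1)\in\mathcal{H}_1$ whose source $h_1$ matches, and compute on one side
\[
(h_3,h_1)\cdot [((h_1,h_2),h_2)]\mathcal{R}
=\bigl[((h_3,h_1)(h_1,h_2),h_2)\bigr]\mathcal{R}
=\bigl[((h_3,h_2),h_2)\bigr]\mathcal{R},
\]
which $\psi$ sends to $(h_3,\overline{h_2})$; on the other side the action from Lemma~\ref{lema:Velero} gives $(h_3,h_1)\cdot(h_1,\overline{h_2})=(h_3,\overline{h_2})$. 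The two match, so $\psi$ is a morphism of left $\mathcal{H}$-sets.

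Finally, since $\psi$ is bijective and $\mathcal{H}$-equivariant, Lemma~\ref{lInvEquivaIsEquiva} implies that $\psi^{-1}$ is also equivariant, so $\psi$ is an isomorphism of left $\mathcal{H}$-sets. I do not expect a genuine obstacle here: the statement is essentially a bookkeeping consequence of Lemmas~\ref{lLeftSetRel} and~\ref{lExGrpdEqRelPairs}. The only place requiring some care is keeping track of which coordinate of an arrow $(h_1,h_2)\in\mathcal{H}_1$ plays the role of source and which plays the role of target; with the convention $\mathsf{s}=\mathrm{pr}_2$, $\mathsf{t}=\mathrm{pr}_1$ fixed in Example~\ref{exam:X}, the computations above become transparent.
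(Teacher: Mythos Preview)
Your proposal is correct and follows essentially the same route as the paper's own proof: both use Lemma~\ref{lExGrpdEqRelPairs} for well-definedness and injectivity, note surjectivity is obvious, and verify $\mathcal{H}$-equivariance by a direct computation of the left action on a generic element. Your version is a bit more explicit in checking the structure maps and in invoking Lemma~\ref{lInvEquivaIsEquiva} to conclude that the inverse is equivariant, but there is no substantive difference in approach.
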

\begin{proof}
The map \(\psi\) is well defined  and injective thanks to  Lemma \ref{lExGrpdEqRelPairs}, and the surjectivity is obvious.
Therefore, we only have to check that \(\psi\) is a homomorphism of left \(\mathcal{H}\)-set.
The condition on the structure maps is trivial so we only have to check the condition on the actions. Henceforth,  take  \(\left({y_1, h_1}\right) \in \mathcal{H}_{ \scriptscriptstyle{1} }\) and \(\left[{\left({\left({h_1, h_2}\right), h_2}\right) }\right] \mathcal{R} \in \left({\mathcal{H}/\mathcal{R}}\right)^{ \scriptscriptstyle{\mathsf{L}} }\):
using the action of equation \eqref{Eq:Paloma}, we compute
\[ \left({y_1, h_1}\right) \cdot \left[{\left({\left({h_1, h_2}\right), h_2}\right) }\right] \mathcal{R} = \left[{\left({\left({y_1, h_2}\right), h_2}\right) }\right] \mathcal{R}
\]
and we apply \(\psi\) to obtain \(\left({y_1, \overline{h_2}}\right)\).
Now we apply again  \(\psi\) to \(\left[{\left({\left({h_1, h_2}\right), h_2}\right) }\right] \mathcal{R}\), we obtain \(\left({h_1, \overline{h_2}}\right)\) and finish by applying the action once again to obtain \(\left({y_1, h_1}\right) \cdot \left({h_1, \overline{h_2} }\right) = \left({y, \overline{h_2} }\right)\).
\end{proof}

\subsection{Equivalence relations and Mackey formula}\label{ssec:EqMF}

Given sets \(H\), \(K\) and \(G\), let us consider the groupoids of pairs \(\mathcal{H}= \left({H \times H, H}\right)\), \(\mathcal{K}= \left({K \times K, K}\right)\) and \(\mathcal{G}= \left({G \times G, G}\right)\).
We have the isomorphisms of groupoids (they're just a switch)
\[ \mathcal{K} \times \mathcal{H} = \left({K \times K \times H \times H, K \times H}\right) \cong \left({K \times H \times K \times H, K \times H}\right) = : \mathcal{A}
\]
and
\[ \mathcal{H} \times \mathcal{G} = \left({H \times H \times G \times G, H \times G}\right) \cong \left({H \times G \times H \times G, H \times G}\right) = : \mathcal{B}
\]
that we call
\[ \gamma_1 \colon \mathcal{K} \times \mathcal{H} \longrightarrow \mathcal{A} 
\qquad \text{and} \qquad
\gamma_2 \colon \mathcal{H} \times \mathcal{G} \longrightarrow \mathcal{B},
\]
respectively. Note that \(\mathcal{A}\) is the groupoid of pairs with respect to the set \(K \times H\) and \(\mathcal{B}\) is the groupoid of pairs with respect to  the set \(H \times G\).
Let \(R\) be an equivalence relation on \(K \times H\) and \(Q\) be an equivalence relation on \(H \times G\): we have \(R \subseteq \mathcal{A}_{ \scriptscriptstyle{1} }\) and \(Q \subseteq \mathcal{B}_{ \scriptscriptstyle{1} }\) and we can consider the inclusion of groupoids
\[ \mathcal{R} : = \left({R, K \times H}\right) \hookrightarrow \mathcal{A} 
\qquad \text{and} \qquad
\mathcal{Q}= \left({Q, H \times G}\right) \hookrightarrow \mathcal{B}.
\]

Defined the groupoids \(\mathcal{M} = \gamma_1^{-1}  \left({\mathcal{R}}\right)\) and \(\mathcal{L} = \gamma_2^{-1} \left({\mathcal{Q}}\right)\), we clearly  have the isomorphisms 
$$
\left({\frac{\mathcal{K} \times \mathcal{H}}{\mathcal{M}}}\right)^{ \scriptscriptstyle{\mathsf{L}} }  \cong \left({\frac{\mathcal{A}}{\mathcal{R}} }\right)^{ \scriptscriptstyle{\mathsf{L}} } \quad \text{and} \quad   \left({\frac{\mathcal{H} \times \mathcal{G}}{\mathcal{L}} }\right)^{ \scriptscriptstyle{\mathsf{L}} } \cong  \left({\frac{\mathcal{B}}{\mathcal{Q}}}\right)^{ \scriptscriptstyle{\mathsf{L}} }
$$
of \(\left({\mathcal{K}, \mathcal{H}}\right)\)-bisets and 
of \(\left({\mathcal{H}, \mathcal{G}}\right)\)-bisets, respectively. 
In this way we get an other isomorphism of \(\left({\mathcal{K}, \mathcal{G}}\right)\)-bisets
\[ 
\left({\frac{\mathcal{K} \times \mathcal{H}}{\mathcal{M}}}\right)^{ \scriptscriptstyle{\mathsf{L}} } \otimes_{\mathcal{H}} \left({\frac{\mathcal{H} \times \mathcal{G}}{\mathcal{L}} }\right)^{ \scriptscriptstyle{\mathsf{L}} } \cong \left({\frac{\mathcal{A}}{\mathcal{R}} }\right)^{ \scriptscriptstyle{\mathsf{L}} } \otimes_{\mathcal{H}} \left({\frac{\mathcal{B}}{\mathcal{Q}}}\right)^{ \scriptscriptstyle{\mathsf{L}} }
\]
where, in the right hand term, the structure map and the action are opportunely defined.

On the other hand, we know that 
\[ \left({\frac{\mathcal{A}}{\mathcal{R}}}\right)^{ \scriptscriptstyle{\mathsf{L}} } = \Big\{ \left[{\left({\left({k_1, h_1, k_2, h_2}\right), \left({k_2, h_2}\right)}\right) }\right] \mathcal{R} | h_1, h_2 \in H, k_1, k_2 \in K \Big\}
\]
and
\[ \left({\frac{\mathcal{B}}{\mathcal{Q}}}\right)^{ \scriptscriptstyle{\mathsf{L}} } = \Big\{ \left[{\left({\left({h_3, g_1, h_4, g_2}\right), \left({h_4, g_2}\right)}\right) }\right] \mathcal{Q} | h_3, h_4 \in H, g_1, g_2 \in G \Big\}.
\]

\begin{lemma}
The following Cartesian product of sets
\[ \left({K \times H}\right) \times \left({\frac{K \times H}{R}}\right) 
\]
admits a structure of  a \(\left({\mathcal{K}, \mathcal{H}}\right)\)-biset.
\end{lemma}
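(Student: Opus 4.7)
The plan is to equip the set $X := (K \times H) \times \big((K \times H)/R\big)$ with explicit structure and action maps, and then verify the axioms of Definition \ref{def:biset}. For an element $\big((k_1,h_1),\overline{(k_2,h_2)}\big) \in X$, I would set
\begin{equation*}
\vartheta\big((k_1,h_1),\overline{(k_2,h_2)}\big) = k_1, \qquad \varsigma\big((k_1,h_1),\overline{(k_2,h_2)}\big) = h_1.
\end{equation*}
Exploiting the multiplication $(a,b)(b,c) = (a,c)$ of a groupoid of pairs, I would then define the left $\mathcal{K}$-action by
\begin{equation*}
(k_3,k_1) \cdot \big((k_1,h_1),\overline{(k_2,h_2)}\big) = \big((k_3,h_1),\overline{(k_2,h_2)}\big)
\end{equation*}
and the right $\mathcal{H}$-action by
\begin{equation*}
\big((k_1,h_1),\overline{(k_2,h_2)}\big) \cdot (h_1,h_3) = \big((k_1,h_3),\overline{(k_2,h_2)}\big).
\end{equation*}
These maps are clearly well-defined, since they affect only the first factor $(k_1,h_1)$ of $X$ and leave the equivalence class $\overline{(k_2,h_2)}$ untouched; compatibility with the source/target conventions of $\mathcal{K}$ and $\mathcal{H}$ (where $\mathsf{s} = \mathrm{pr}_2$, $\mathsf{t} = \mathrm{pr}_1$) is immediate.

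The verification of the three left-action axioms, the three right-action axioms, and the biset compatibility reduces to a handful of transparent coordinate-wise computations. The action axioms follow immediately from $(a,b)(b,c) = (a,c)$ together with $\iota_k = (k,k)$ in $\mathcal{K}$ and $\iota_h = (h,h)$ in $\mathcal{H}$. Biset compatibility is manifest because the left action modifies only the $K$-component of $(k_1,h_1)$ while the right action modifies only its $H$-component, so the two commute on the nose:
\begin{equation*}
(k_3,k_1) \cdot \big(x \cdot (h_1,h_3)\big) = \big((k_3,h_3),\overline{(k_2,h_2)}\big) = \big((k_3,k_1) \cdot x\big) \cdot (h_1,h_3).
\end{equation*}

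As an alternative conceptual route, one may apply Lemma \ref{lema:Velero} to the groupoid of pairs $\mathcal{A}$ on the set $K \times H$ to obtain a left $\mathcal{A}$-set structure on $X$; transporting along the isomorphism $\gamma_1^{-1} \colon \mathcal{A} \to \mathcal{K} \times \mathcal{H}$ then produces a left $(\mathcal{K} \times \mathcal{H})$-set structure, and combining this with the canonical self-duality $\mathcal{H} \cong \mathcal{H}^{\mathrm{op}}$ of the groupoid of pairs (given by $(h_1,h_2) \mapsto (h_2,h_1)$), Proposition \ref{pBisetsLeftSets} yields the desired $(\mathcal{K},\mathcal{H})$-biset structure, matching the one defined above. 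I expect no conceptual difficulty; the only bookkeeping concern is keeping the source and target conventions of the various groupoids of pairs straight throughout the verification.
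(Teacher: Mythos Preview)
Your proposal is correct and takes essentially the same approach as the paper: the paper defines the identical structure maps and actions (modulo a naming swap---it calls the right structure map $\vartheta$ rather than $\varsigma$), appeals to Lemma~\ref{lLeftSetRel} for the left $\mathcal{K}$-set part, and declares the remaining axioms and biset compatibility immediate. Your additional conceptual route via $\gamma_1$, the self-duality $\mathcal{H}\cong\mathcal{H}^{\mathrm{op}}$, and Proposition~\ref{pBisetsLeftSets} is a nice bonus that the paper does not spell out.
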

\begin{proof}
The fact that it is a left \(\mathcal{K}\)-set can be proved like in Lemma~\(\ref{lLeftSetRel}\). Therefore we only have to show that it is a right \(\mathcal{H}\)-set.
The structure map is given by
\[ \begin{aligned}{ \vartheta}  \colon & {\left({K \times H}\right) \times \left({\frac{K \times H}{R}}\right) } \longrightarrow {\mathcal{H}} \\ & {\left({\left({k_1, h_1}\right), \overline{\left({k_2, h_2}\right) } }\right) }  \longrightarrow {h_1}\end{aligned} 
\]
and action is defined by 
\[ \begin{aligned}
& \left({K \times H}\right) \times \left({\frac{K \times H}{R}}\right)   {{}_{ \scriptscriptstyle{\vartheta} } {\times}}_{ \scriptscriptstyle{\mathsf{s}} }\, \mathcal{H}_{ \scriptscriptstyle{1} } \longrightarrow \left({K \times H}\right) \times \left({\frac{K \times H}{R}}\right) \\
& \Big( \left({\left({k_1, h_1}\right), \overline{k_2, h_2} }\right), \left({h_1, h_3}\right) \Big) \longrightarrow  \left({\left({k_1, h_3}\right), \overline{\left({k_2, h_2}\right) } .}\right)
\end{aligned}
\]
Now we have to prove the action conditions.
The neutral element conditions and the associativity are trivial.
Regarding the other condition, let be
\[
 \Big( \left({\left({k_1, h_1}\right), \overline{\left({ k_2, h_2}\right)} }\right), \left({h_1, h_3}\right) \Big)  \in  \Big(  \left({K \times H}\right) \times \left({\frac{K \times H}{R}}\right)  \Big) \,  {{}_{ \scriptscriptstyle{\vartheta} } {\times}}_{ \scriptscriptstyle{\mathsf{t}} }\, \mathcal{H}_{ \scriptscriptstyle{1} }:
\]
we have
\[ \vartheta\left({\left({k_1, h_3}\right), \overline{\left({k_2, h_2}\right) } }\right) = h_3 = \mathsf{s}\left({h_1, h_3}\right).
\]
Lastly, the compatibility conditions of the left and right actions are immediate to verify.
\end{proof}

\begin{proposition}
Keeping the above notations, we have the following isomorphism of \(\left({\mathcal{K}, \mathcal{H}}\right)\)-bisets
\[ \begin{aligned}{\varphi}  \colon & {\left({\frac{\mathcal{A}}{\mathcal{R}}}\right)^{ \scriptscriptstyle{\mathsf{L}} }} \longrightarrow {\left({K \times H}\right) \times \frac{K \times H}{R}} \\ & {\left[{\left({\left({k_1, h_1, k_2, h_2}\right), \left({k_2, h_2}\right)}\right) }\right] \mathcal{R}}  \longrightarrow {\Big(\left({k_1, h_1}\right), \overline{\left({k_2, h_2}\right) } \Big).}\end{aligned}
\]
\end{proposition}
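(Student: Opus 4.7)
The plan is to extend the argument of Proposition~\ref{pIsomLCosetsRel} to the biset setting. Both sides of the claimed isomorphism carry a \(\left({\mathcal{K}, \mathcal{H}}\right)\)-biset structure: the left-hand side \(\left({\mathcal{A}/\mathcal{R}}\right)^{\mathsf{L}}\) inherits one from \(\left({\left({\mathcal{K} \times \mathcal{H}}\right)/\mathcal{M}}\right)^{\mathsf{L}}\) (see Lemma~\ref{lSubGrpdQuotL}) via the groupoid isomorphism \(\gamma_1\), while the right-hand side is equipped with the structure just exhibited in the preceding lemma. The proof will proceed in three stages: a characterization of the left cosets, the bijectivity of \(\varphi\), and its equivariance.

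First, I would establish an analog of Lemma~\ref{lExGrpdEqRelPairs} for the product set \(K \times H\) equipped with the equivalence relation \(R\), namely that
\[
\left[\left(\left({k_1, h_1, k_2, h_2}\right), \left({k_2, h_2}\right)\right)\right] \mathcal{R}
= \left[\left(\left({k_1', h_1', k_2', h_2'}\right), \left({k_2', h_2'}\right)\right)\right] \mathcal{R}
\]
if and only if \(\left({k_1, h_1}\right) = \left({k_1', h_1'}\right)\) and \(\overline{\left({k_2, h_2}\right)} = \overline{\left({k_2', h_2'}\right)}\) in \(\left({K \times H}\right)/R\). This is obtained by unpacking Lemma~\ref{lEquLatLeft}: the coset condition forces \(\left({k_2', h_2', k_2, h_2}\right) \in R\) and simultaneously a pair-arrow equality in \(\mathcal{A}\) which pins down \(k_1 = k_1'\) and \(h_1 = h_1'\). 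From this characterization, the well-definedness and the injectivity of \(\varphi\) are immediate, and the surjectivity is clear by exhibiting \(\left[\left(\left({k_1, h_1, k_2, h_2}\right), \left({k_2, h_2}\right)\right)\right] \mathcal{R}\) as a preimage of \(\left(\left({k_1, h_1}\right), \overline{\left({k_2, h_2}\right)}\right)\).

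It then remains to verify equivariance for both the left \(\mathcal{K}\)-action and the right \(\mathcal{H}\)-action, as well as the compatibility with the two structure maps. The structure maps on both sides read off the coordinates \(k_1\) and \(h_1\) respectively, so their compatibility is trivial. Transported through \(\gamma_1\) from the formulae of Lemma~\ref{lSubGrpdQuotL}, the left \(\mathcal{K}\)-action on representatives modifies only the \(k_1\)-slot, while the right \(\mathcal{H}\)-action modifies only the \(h_1\)-slot, in agreement with the actions on \(\left({K \times H}\right) \times \left({K \times H}\right)/R\) defined in the preceding lemma; thus \(\varphi\) intertwines both actions verbatim.

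The main obstacle is the bookkeeping required to transfer the \(\left({\mathcal{K}, \mathcal{H}}\right)\)-biset structure through the reshuffling \(\gamma_1\): one must confirm that the actions inherited on \(\left({\mathcal{A}/\mathcal{R}}\right)^{\mathsf{L}}\) really do touch the correct coordinates in the new labelling, and that the quotient by \(R\), which mixes the whole block \(\left({k_2, h_2}\right)\), is genuinely independent of the first coordinate \(\left({k_1, h_1}\right)\) and hence descends without interference from either action. Once this identification is pinned down, the verification reduces to an unwinding of definitions analogous to that of Proposition~\ref{pIsomLCosetsRel}.
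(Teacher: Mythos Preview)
Your proposal is correct and follows essentially the same route as the paper. The only packaging difference is that the paper invokes Proposition~\ref{pIsomLCosetsRel} directly, applied to the set \(K \times H\) in place of \(H\), to obtain in one stroke that \(\varphi\) is a bijective left \(\mathcal{K}\)-set map (indeed a left \(\mathcal{A}\)-set map), and then checks only the right \(\mathcal{H}\)-equivariance by hand; you instead plan to re-establish the underlying coset characterization and verify both actions separately, which amounts to unfolding that same proposition.
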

\begin{proof}
As a map of left $\cK$-sets,  \(\varphi\) is an isomorphism thanks to Proposition \ref{pIsomLCosetsRel} applied to the set \(K \times H\).
So we just have to prove that it is an homomorphism of right \(\mathcal{H}\)-sets.
The condition on the structure map is obvious.
Regarding the condition on the action maps, given \(x=\left[{\left({\left({k_1, h_1, k_2, h_2}\right), \left({k_2, h_2}\right)}\right) }\right] \mathcal{R} \in \left({\mathcal{A}/ \mathcal{R}}\right)^{ \scriptscriptstyle{\mathsf{L}} }\) and \(\left({h_1, h_3}\right) \in \mathcal{H}_{ \scriptscriptstyle{1} }\),  we have
\[ x \cdot \left({h_1, h_3}\right) = \left[{\left({\left({k_1, h_3, k_2, h_2}\right), \left({k_2, h_2}\right)}\right) }\right] \mathcal{R} \xymatrix{   \ar@{|->}[r]^{\varphi} &    }  \left({\left({k_1, h_3}\right), \overline{\left({k_2, h_2}\right) } }\right) .
\]
On the other hand, we have 
$$
\varphi(x) \cdot  \left({h_1, h_3}\right) \,=\, \left({\left({k_1, h_1, \overline{\left({k_2, h_2}\right) } }\right) }\right) \cdot \left({h_1, h_3}\right) \,=\, \left({\left({k_1, h_3}\right), \overline{\left({k_2, h_2}\right) } }\right),
$$
and this finishes the proof. 
\end{proof}

Since a similar proposition is true also for \(\left({\mathcal{B}/ \mathcal{Q}}\right)^{ \scriptscriptstyle{\mathsf{L}} }\), we have the following isomorphism of \(\left({\mathcal{K}, \mathcal{G}}\right)\)-bisets:
\begin{equation}
\label{eBisetTensRelIsom}
\left({\frac{\mathcal{A} }{\mathcal{R}} }\right)^{ \scriptscriptstyle{\mathsf{L}} } \otimes_{\mathcal{H} } \left({\frac{\mathcal{B} }{\mathcal{Q}} }\right)^{ \scriptscriptstyle{\mathsf{L}} } \longrightarrow \Big( \left({K \times H}\right) \times \left({\frac{K \times H}{R} }\right) \Big) \otimes_{\mathcal{H} } \Big( \left({H \times G}\right) \times \left({\frac{H \times G}{Q} }\right) \Big).
\end{equation} 
The typical element of the right hand side of formula~\(\eqref{eBisetTensRelIsom}\) is
\begin{equation}
\label{eBisetTensRelEle}
y= \left({\left({k_1, h_3}\right), \overline{\left({k_2, h_2}\right) } }\right) \otimes_{\mathcal{H} }  \left({\left({h_3, g_1}\right), \overline{\left({h_4, g_2}\right) }  }\right) 
\end{equation} 
for \(k_1, k_2 \in K\), \(h_2, h_3, h_4 \in H\) and \(g_1, g_2 \in G\).
Notice that, by the definition of the tensor product over \(\mathcal{H}\), the element \(h_3\) should appear in both factors of the tensor product.   Therefore, there is the following isomorphism of \(\left({\mathcal{K}, \mathcal{G}}\right)\)-bisets given explicitly by:
\[ \begin{aligned}
& \Big( \left({K \times H}\right) \times \left({\frac{K \times H}{R} }\right) \Big) \otimes_{\mathcal{H} } \Big( \left({H \times G}\right) \times \left({\frac{H \times G}{Q} }\right) \Big)  \longrightarrow K \times \frac{K \times H}{R} \times \frac{H \times G}{Q} \times G \\
&\left({\left({k_1, h_3}\right), \overline{\left({k_2, h_2}\right) } }\right) \otimes_{\mathcal{H} }  \left({\left({h_3, g_1}\right), \overline{\left({h_4, g_2}\right) }  }\right)  \longrightarrow  \left({k_1, \overline{\left({k_2, h_2}\right) }, \overline{\left({h_4, g_2}\right) }, g_1}\right).
\end{aligned}
\]
This gives us the left hand side of the Mackey formula \eqref{eMackeyFormulaBisets} in the situation under consideration.

Let us pass to the right hand side of the formula  \eqref{eMackeyFormulaBisets}.  Consider  the biset $X$  given  in equation \eqref{Eq:Xx}, and fix a representative set  \(\rep_{(\mathcal{M}, \,  \mathcal{L})}\left({X}\right)\). Using  the notations of Proposition \ref{pModifDoubleCosets}, we have 
\begin{equation*}
X= \Set{ \left({w, u, h, v, a}\right) \in \mathcal{K}_{ \scriptscriptstyle{0} } \times \mathcal{H}_{ \scriptscriptstyle{0} } \times \mathcal{H}_{ \scriptscriptstyle{1} } \times \mathcal{H}_{ \scriptscriptstyle{0} } \times \mathcal{G}_{ \scriptscriptstyle{0} }  |  \begin{gathered}
\left({w, u}\right) \in \mathcal{M}_{ \scriptscriptstyle{0} }, \left({v, a}\right) \in \mathcal{L}_{ \scriptscriptstyle{0} }, \\
 u = \mathsf{t}\left({h}\right), v= \mathsf{s}\left({h}\right) 
\end{gathered} }.
\end{equation*}

In our case, it is clear that $X$ is identified, as a $(\cM, \cL)$-biset with 
\[ 
Y = \Set{ \left({k,  \left({h_1, h_2}\right), g}\right) \in K \times H  \times H \times G  }= K  \times \mathcal{H}_{ \scriptscriptstyle{1} } \times G.
\]
Therefore, one can choose a bijection  between their representative sets
$ \rep_{\left({\mathcal{M}, \, \mathcal{L}}\right)}\left({X}\right)$ and
$\rep_{\left({\mathcal{M},\,  \mathcal{L}}\right)}\left({Y}\right)$. As a consequence, we obtain the following isomorphism of \(\left({\mathcal{K}, \mathcal{G}}\right)\)-bisets:
\begin{equation}\label{eIsomBisetsMackeyRel}
 \biguplus_{\left({k, h_1, \left({h_1, h_2}\right), h_2, g}\right) \, \in \, \rep_{(\mathcal{M},\, \mathcal{L})}\left({X}\right)} \left( \frac{\mathcal{K} \times \mathcal{H} }{ \mathcal{M}^{ \scriptscriptstyle{\left({k, h_1}\right) } } \ast \left({ {^{ \scriptscriptstyle{\left({\left({h_1, h_2}\right), \,\iota_{ \scriptscriptstyle{g} } }\right) } } } \mathcal{L}^{ \scriptscriptstyle{\left({h_2, \,g}\right) } } }\right) }  \right)^{ \scriptscriptstyle{\mathsf{L} } }   \cong  \biguplus_{\left({k, \left({h_1, h_2}\right), g}\right) \, \in \, \rep_{(\mathcal{M},\, \mathcal{L})}\left({Y}\right)} \left( \frac{\mathcal{K} \times \mathcal{H} }{ \mathcal{M}^{ \scriptscriptstyle{\left({k, h_1}\right) } } \ast \left({ {^{ \scriptscriptstyle{\left({\left({h_1, h_2}\right),\, \iota_{ \scriptscriptstyle{g} } }\right) } } } \mathcal{L}^{ \scriptscriptstyle{\left({h_2, \,g}\right) } } }\right) }  \right)^{ \scriptscriptstyle{\mathsf{L} }. }  
\end{equation} 
On the other hand, given \(k, k' \in K\), \(h_1, h_2, h_1', h_2' \in H\) and \(g, g' \in G\),  we have
\[ \left({k, \left({h_1, h_2}\right), g}\right) \sim \left({k' , \left({h_1', h_2'}\right), g}\right),
\]
as elements of the \(\left({\mathcal{M}, \mathcal{L}}\right)\)-biset \(Y\), if and only if there are \(m = \left({k, k', h_1, h_1'}\right)\in \mathcal{M}_{ \scriptscriptstyle{1} }\) and \(l= \left({h_2, h_2', g, g'}\right)  \in \mathcal{L}_{ \scriptscriptstyle{1} }\) such that
\[ \left({k, \left({h_1, h_2}\right), g}\right) = m \left({k' , \left({h_1', h_2'}\right), g'}\right) l ,
\]
if and only if \(\left({k, k', h_1, h_1'}\right) \in \mathcal{M}_{ \scriptscriptstyle{1} }\) and \(\left({h_2, h_2', g, g'}\right) \in \mathcal{L}_{ \scriptscriptstyle{1} }\), if and only if \(\left({k, h_1}\right) R \left({k', h_1'}\right)\) and \(\left({h_2, g}\right) Q \left({h_2', g'}\right)\).
Therefore, we have a bijection
\begin{equation}\label{Eq:Yiso} 
\rep_{(\mathcal{M},\, \mathcal{L})}\left({Y}\right)
\, \simeq \, 
\frac{K \times H}{R} \times \frac{H \times G}{Q}.
\end{equation}

Let us now decipher the denominator parts of the right hand-side of equation \eqref{eIsomBisetsMackeyRel}.  For any element   \(\left({k, \left({h_1, h_2}\right), g}\right) \in Y\), we have
\[ 
\mathcal{M}^{ \scriptscriptstyle{\left({k,\,  h_1}\right) } }  = \Big\{\left({k, k, h_1, h_1}\right) \Big\} ,
\qquad \mathcal{L}^{ \scriptscriptstyle{\left({h_2,\,g }\right) } }  = \Big\{\left({h_2, h_2, g, g}\right) \Big\} 
\]
and, using the multiplication of groupoids of pairs we get
\[ \begin{gathered}
{ ^{ \scriptscriptstyle{\left({\left({h_1,\,  h_2}\right), \, \iota_{ \scriptscriptstyle{g} } }\right)} }   }  \mathcal{L}^{ \scriptscriptstyle{\left({h_2, g}\right) } } \, =\,  { ^{ \scriptscriptstyle{\left({\left({h_1, h_2}\right),\; \left({g, g}\right) }\right)} }   }  \mathcal{L}^{ \scriptscriptstyle{\left({h_2, g}\right) } } \,
= \,  \Set{\Big( \left({h_1, h_2}\right), \left({g,g}\right) \Big) \Big(\left({h_2, h_2}\right), \left({g,g}\right) \Big) \Big(\left({h_1, h_2}\right), \left({g,g}\right) \Big)^{-1}} \\
= \Set{\Big( \left({h_1, h_2}\right) \left({h_2, h_2}\right) \left({h_2, h_1}\right), \left({g,g}\right) \left({g,g}\right) \left({g,g}\right) \Big)} \,
=\, \Set{\Big( \left({h_1, h_1}\right), \left({g,g}\right) \Big)} 
= \Big\{\left({h_1, h_1, g,g}\right) \Big\}.
\end{gathered}
\]
As a consequence, we have
\[  \mathcal{M}^{ \scriptscriptstyle{\left({k,\, h_1}\right) } } \ast \left({ {^{ \scriptscriptstyle{\left({\left({h_1,\, h_2}\right),\,  \iota_{ \scriptscriptstyle{g} } }\right) } } } \mathcal{L}^{ \scriptscriptstyle{\left({h_2, \,g}\right) } } }\right)  = \big\{\left({k,k,g,g}\right) \} 
= \left({\mathcal{K} \times \mathcal{G} }\right)^{ \scriptscriptstyle{\left({k,\,g}\right) } },
\]
which leads to the following  isomorphism of \(\left({\mathcal{K}, \mathcal{G}}\right)\)-biset 
$$
 \left( \frac{\mathcal{K} \times \mathcal{H} }{ \mathcal{M}^{ \scriptscriptstyle{\left({k, h_1}\right) } } \ast \left({ {^{ \scriptscriptstyle{\left({\left({h_1, h_2}\right), \iota_{ \scriptscriptstyle{g} } }\right) } } } \mathcal{L}^{ \scriptscriptstyle{\left({h_2, g}\right) } } }\right) }  \right)^{ \scriptscriptstyle{\mathsf{L} } } \,\cong \,  \Big\{ \left({k_1, k, g_1, g}\right) \in K \times K \times G \times G |  \left({k_1, g_1}\right) \in K \times G \}.
$$

Combining with formula \eqref{eIsomBisetsMackeyRel}, we arrive to the isomorphism of \(\left({\mathcal{K}, \mathcal{G}}\right)\)-bisets
\[ \begin{gathered}
 \biguplus_{\left({k, h_1, \left({h_1,\, h_2}\right), h_2, g}\right) \, \in \, \rep_{(\mathcal{M},\, \mathcal{L})}\left({X}\right)} \left( \frac{\mathcal{K} \times \mathcal{H} }{ \mathcal{M}^{ \scriptscriptstyle{\left({k,\, h_1}\right) } } \ast \left({ {^{ \scriptscriptstyle{\left({\left({h_1, h_2}\right), \,\iota_{ \scriptscriptstyle{g} } }\right) } } } \mathcal{L}^{ \scriptscriptstyle{\left({h_2,  \,g}\right) } } }\right) }  \right)^{ \scriptscriptstyle{\mathsf{L} } }   \,  \cong \, \biguplus_{\left({k, \left({h_1,\, h_2}\right), g}\right) \, \in \, \rep_{(\mathcal{M},\, \mathcal{L})}\left({Y}\right)} \left({K \times G}\right) 
\end{gathered}
\]

Therefore, using the bijection of equation \eqref{Eq:Yiso},   in this case the Mackey formula can be read as the following isomorphism of \(\left({\mathcal{K}, \mathcal{G}}\right)\)-bisets
\[ K \times \frac{K \times H}{R} \times \frac{H \times G}{Q} \times G \cong \biguplus_{ \frac{K \times H}{R} \times \frac{H \times G}{Q} } \left({K \times G}\right)
\]
where the right hand side is the coproduct of \(\frac{K \times H}{R} \times \frac{H \times G}{Q}\) times \(K \times G\) in the category of \(\left({\mathcal{K}, \mathcal{G}}\right)\)-bisets.

\subsection{A right groupoid-set with a non surjective structure map}\label{ssec:bonito}
We will give an example of a right $\cG$-set whose structure map is not surjective, completing by this the observations made in Remark \ref{rem:Core}. 
Given a set \(S\), let be \(\emptyset \neq S' \subsetneqq S\) and let \(R\) be an equivalence relation on \(S\).
Assume that there is \(x_0 \in S \setminus S'\) such that for every \(y \in S'\), \(\left({x_0,y}\right) \notin R\).
Let's define \(R_{S'} = R \cap \left({S' \times S'}\right)\), that is, the restriction of $R$ to $S'$.  It is clear that \(R_{S'}\) is an equivalence relation on \(S'\).

We have the following two groupoids of equivalence relation: \(\mathcal{G}= \left({R, S}\right)\) and \(\mathcal{H}= \left({R_{S'}, S'}\right)\) with the inclusion 
 \(\uptau \colon \mathcal{H}  \lhook\joinrel\longrightarrow \mathcal{G}\) of groupoids. Let us define \(X:= \mathcal{H}_{ \scriptscriptstyle{0} } \,  {{}_{ \scriptscriptstyle{\uptau} } {\times}}_{ \scriptscriptstyle{\mathsf{s}} }\, \mathcal{G}_{ \scriptscriptstyle{1} } = S' \,  {{}_{ \scriptscriptstyle{\uptau} } {\times}}_{ \scriptscriptstyle{\mathsf{t}} }\, R\).
Note that we have to choose \(S' \neq \emptyset\) otherwise \(X\) would be empty.
The set \(X\) becomes a right \(\mathcal{G}\)-set with structure map
\[ 
\begin{aligned}{\varsigma}  \colon & {X} \longrightarrow {\mathcal{G}_{ \scriptscriptstyle{0} }} \\ & {\left({s', (a,b)}\right) }  \longrightarrow {\mathsf{s}(a,b)= b}\end{aligned}
\]
and action
\[ \begin{aligned}
& X \,  {{}_{ \scriptscriptstyle{\varsigma} } {\times }}_{ \scriptscriptstyle{\mathsf{s} } }\, \mathcal{G}_{ \scriptscriptstyle{1} }  \longrightarrow X \\
& \Big( \left({s', (a,b)}\right) , \left({b', d}\right) \Big) \longrightarrow \left({s', (a,b)\left({b',d}\right)}\right) = \left({s', (a,d).}\right), 
\end{aligned}
\]
where
\[ b= \varsigma\left({s', (a,b)}\right) = \mathsf{t}\left({b',d}\right)= b'.
\]
The axioms of right \(\mathcal{G}\)-set are not difficult to  verify.

Now, we want to prove that \(\varsigma\) is not surjective.
By contradiction, let us assume that \(\varsigma\) is surjective. Therefore, there is \(\left({s', (a,b)}\right) \in X = S' \,  {{}_{ \scriptscriptstyle{\uptau} } {\times }}_{ \scriptscriptstyle{\mathsf{t} } }\, R\) such that \(x_0 = \varsigma\left({s', (a,b)}\right) = b\): in particular \(a = \mathsf{t}(a,b) = \uptau_{ \scriptscriptstyle{0} }\left({s'}\right) = s'\), hence  \(\left({s', x_0}\right)= (a,b) \in R\), with \(s' \in S'\), which contradicts our assumption. As a consequence \(x_0\) doesn't belong to the image of \(\varsigma\) in \(\mathcal{G}_{ \scriptscriptstyle{0} }\) and \(\varsigma\) is not surjective.
As a particular implementation of this example, we can choose \(S= \mathbb{R}\), \(S' = \mathbb{Q}\), $x_0=\sqrt{2}$ and for every \(s_1, s_2 \in S\) we can define \(s_1 R s_2\) if and only if \(s_1- s_2 \in  \mathbb{Z}\).

\bigskip
\noindent{}
\textbf{Acknowledgement.} The second author wants to thank the departement of algebra of the University of Granada, Campus of Ceuta, for the marvellous experience, the friendly and welcoming ambience, as well as for the partial support of the grant MTM2016-77033-P, which enabled him to undertake the long journey there.

\end{document}